\documentclass[reqno,10pt]{amsart}
\usepackage{graphicx,amsfonts,amssymb,amsmath,amsthm,url,amscd,comment}
\usepackage{color}
\usepackage{chngcntr}
\usepackage[usenames,dvipsnames]{xcolor}
\usepackage[normalem]{ulem}
\usepackage{enumerate}
\usepackage{pdfsync}
\usepackage{graphicx, amsmath, amssymb, amsfonts, amsthm, stmaryrd, tikz, amscd}
\usepackage[all]{xy}
\usetikzlibrary{matrix,arrows,decorations.pathmorphing}
\tolerance=1000

\newif\iflong
\longtrue

\usepackage{graphicx, amsmath, amssymb, amsfonts, amsthm, stmaryrd, tikz, amscd}
\usepackage[all]{xy}
\usetikzlibrary{matrix,arrows,decorations.pathmorphing}

 \usepackage[hyperfootnotes=false, colorlinks, citecolor=	 RoyalBlue, urlcolor=blue, linkcolor=blue         ]{hyperref}

\theoremstyle{plain} 
\newtheorem{theorem}             {Theorem} 
 
 \newtheorem*{hypothesisHF} {Hypothesis $H(\mathcal{F})$}

\theoremstyle{definition}

\theoremstyle{plain} 

\theoremstyle{remark}

\counterwithin*{definition}{subsubsection}
\newtheorem*{definition*}  {Definition}

\newtheorem*{example*}    {Example}

\newtheorem{remark}             {Remark}
\newtheorem*{remark*}            {Remark}

\newtheoremstyle{itplain} 
    {6pt}                    
    {5pt\topsep}                    
    {\itshape}                   
    {}                           
    {\itshape}                   
    {.}                          
    {5pt plus 1pt minus 1pt}                       
    {}  

\theoremstyle{itplain} 
\newtheorem{lemma}{Lemma}

\newtheorem*{lemma*}{Lemma}

\newtheorem*{corollary*} {Corollary}

\newtheorem*{conjecture*}    {Conjecture}

\theoremstyle{remark} 

\newtheorem*{lemmatest*}{Lemma}

\counterwithin*{lemma}{subsubsection}
\counterwithin*{corollary}{subsubsection}

\usepackage{etoolbox}
\patchcmd{\section}{\scshape}{\bfseries}{}{}
\makeatletter
\renewcommand{\@secnumfont}{\bfseries}
\makeatother


\renewcommand{\Re}{\mathrm{Re}}



\renewcommand{\geq}{\geqslant}
\renewcommand{\leq}{\leqslant}






\numberwithin{equation}{section}
\DeclareMathOperator{\sgn}{sgn}
\DeclareMathOperator{\Mp}{Mp}

\def\rtchi{\omega}

\DeclareMathOperator{\htt}{ht}

\DeclareMathOperator{\ad}{ad}
\DeclareMathOperator{\Ad}{Ad}

\def\eps{\varepsilon}

\def\PGL{\operatorname{PGL}}
\def\GL{\operatorname{GL}}
\def\SL{\operatorname{SL}}

\DeclareMathOperator{\norm}{norm}
\DeclareMathOperator{\trace}{trace}

\DeclareMathOperator{\Sob}{Sob}

\def\O{\operatorname{O}}

\DeclareMathOperator{\ram}{ram}

\DeclareMathOperator{\pr}{pr}

\DeclareMathOperator{\Eis}{Eis}

\DeclareMathOperator{\Wd}{Wd}

\DeclareMathOperator{\Sp}{Sp}

\DeclareMathOperator{\vol}{vol}

\DeclareMathOperator{\supp}{supp}

\author{Paul D. Nelson}
\address{ETH Z{\"u}rich, Department of Mathematics, R{\"a}mistrasse 101, CH-8092, Z{\"u}rich, Switzerland}
\email{paul.nelson@math.ethz.ch}
\subjclass[2010]{Primary 11F70; Secondary 11F27, 58J51}
\date{\today}
\title[Subconvex equidistribution, Eisenstein observables]{Subconvex equidistribution of cusp forms: reduction to Eisenstein observables}
\hypersetup{
  pdfkeywords={},
  pdfsubject={},
  pdfcreator={Emacs 24.5.1 (Org mode 8.2.10)}}
\begin{document}

\begin{abstract}
  Let $\pi$ traverse a sequence of cuspidal automorphic
  representations of $\GL_2$ with large prime level, unramified central character and bounded
  infinity type.
  For $G \in \{\GL_1, \PGL_2\}$,
  let $H(G)$ denote the assertion that
  subconvexity holds for $G$-twists of the adjoint
  $L$-function of $\pi$, with polynomial dependence upon the
  conductor of the twist.
  We
  show that $H(\GL_1)$ implies $H(\PGL_2)$.

  In geometric terms, $H(\PGL_2)$ corresponds roughly to an instance
  of arithmetic quantum unique ergodicity with a power savings
  in the error term, $H(\GL_1)$ to the special
  case in which the relevant sequence of measures
  is tested against an Eisenstein series.
\end{abstract}
\maketitle

\setcounter{tocdepth}{2}

\tableofcontents

\section{Introduction}
\label{sec-1}
In the 1980's and 1990's, it was discovered
that the subconvexity problems\footnote{
  A family of $L$-functions $L(\pi,s)$ attached to a
  family $\mathcal{F}$ of parameters $\pi \in \mathcal{F}$ with
  analytic conductors $C(\pi) \in \mathbb{R}_{\geq 1}$ is said to
  satisfy a \emph{subconvex bound} if there are fixed quantities
  $\delta = \delta(\mathcal{F}) > 0$ and
  $c = c(\mathcal{F}) \geq 0$ so that
  $|L(\pi,\tfrac{1}{2})| \leq c C(\pi)^{1/4-\delta}$ for all
  $\pi \in \mathcal{F}$; the \emph{subconvexity problem} for a
  given family consists of establishing a subconvex bound (see
  e.g. \cite{MR1826269}, \cite{MR2331346}, \cite{michel-2009}).
  In this article, $L(\pi,s)$ always
  refers to the finite part of an $L$-function,
  omitting $\Gamma$-factors at $\infty$.
}
for the families
of automorphic $L$-functions
\begin{equation}\label{sc:1}
  \text{
    $L(\pi \otimes \tau,\tfrac{1}{2})$
    \quad
    ($\tau$ on
    $\GL_2$ fixed, $\pi$ on $\GL_2$ varying),
  }
\end{equation}
\begin{equation}\label{sc:2}
  \text{
    $L(\ad(\pi) \otimes \tau,\tfrac{1}{2})$
    \quad
    ($\tau$ on
    $\PGL_2$ fixed, $\pi$ on $\GL_2$ varying)
  }
\end{equation}
were intimately related to fundamental arithmetical
equidistribution problems, concerning in the first case
\eqref{sc:1} the distribution of integral points on spheres,
representations of integers by ternary quadratic forms, Heegner
points and closed geodesics on the modular surface, and so on
(see e.g. \cite{MR1826269, MichelVenkateshICM, MR2331346} and
references), and in the second case \eqref{sc:2} the limiting
mass distribution of automorphic forms (``arithmetic quantum
unique ergodicity'', see e.g. \cite{MR1321639, MR1826269,
  MR2680499, PDN-AP-AS-que, sarnak-progress-que} and
references).  Motivated in part by such applications,
fundamental and increasingly robust methods for attacking those
problems were introduced and developed by many authors.  A
capstone of this progress was the solution by Michel--Venkatesh
\cite{michel-2009} in 2009 of the general case of the
subconvexity problem for \eqref{sc:1}.
One crucial step towards \cite{michel-2009}
was Michel's observation \cite{Mi04} that the subconvexity problem for
\eqref{sc:1} may be reduced to the corresponding problem for
$L(\tau \otimes \chi,1/2)$ ($\tau$ on $\GL_2$ fixed, $\chi$ on
$\GL_1$ varying).

By contrast, and despite sustained interest,
the subconvexity problem for \eqref{sc:2}
has seen no progress in non-dihedral cases until very recently.
(The case of \eqref{sc:2} in which $\pi$ is dihedral
reduces to \eqref{sc:1},
as exploited by
Sarnak \cite{Sar01}
in one of the first works on cases of \eqref{sc:1} in which both factors are cuspidal.)

Under important local assumptions (roughly
``prime level aspect''),
the main result of this article
reduces the subconvexity problem for
\eqref{sc:2}
to the
corresponding problem for
\begin{equation}\label{eqn:reduce-to-4}
  \text{
    $L(\pi \otimes \overline{\pi} \otimes \chi,\tfrac{1}{2})$
    \quad
    ($\chi$ on
    $\GL_1$ fixed, $\pi$ on $\GL_2$ varying).
  }
\end{equation}
In view of the factorizations\footnote{ The reader might 
  ask whether such identities are known to hold for the
  ramified Euler factors at finite places.  We believe this
  question may be answered affirmatively (e.g., by defining
  those local factors using known cases of local Langlands, and
  noting that this definition is compatible with the period
  formulas that we cite, which indeed depend only upon the
  unramified Euler factors).  On the other hand, the bad Euler
  factors at finite places are irrelevant to our purposes, since
  bounds towards Ramanujan show that their presence has no
  effect on the subconvexity problem (see \S\ref{sec-5-2}, item
  \ref{item:bad-Euler-irrel}).  The reader is thus free to replace
  every $L(\dotsb)$ in this article with $L^{(S)}(\dotsb)$,
  where $S$ denotes the set of bad places and $L^{(S)}$ the
  Euler product obtained by omitting factors from such places.  }
\begin{equation}\label{eq:factorization-1}
  L(\pi \otimes \overline{\pi } \otimes \tau,\tfrac{1}{2})
  =
  L(\tau,\tfrac{1}{2}) L(\ad(\pi) \otimes \tau,\tfrac{1}{2})
\end{equation}
\begin{equation}\label{eq:factorization-2}
  L(\pi \otimes \overline{\pi } \otimes \chi,\tfrac{1}{2})
  =
  L(\chi,\tfrac{1}{2}) L(\ad(\pi) \otimes \chi,\tfrac{1}{2}),
\end{equation}
the latter problem
reduces further to
that for
\begin{equation}\label{eq:munshi-aspect}
  \text{$L(\ad(\pi) \otimes \chi, \tfrac{1}{2})$
    \quad
    ($\chi$ on
    $\GL_1$ fixed, $\pi$ on $\GL_2$ varying)
  }.
\end{equation}
Our result thus mildly strengthens that indicated in the
abstract.

Essential motivation for this work came from a talk by R. Munshi
at ETH Zurich in May 2015, where he announced a proof of a
subconvex bound for \eqref{eq:munshi-aspect} in a specific
aspect ($\pi$ corresponding to a holomorphic form of large prime
level over $\mathbb{Q}$); a detailed draft
\cite{2017arXiv170905615M} of that proof has been available
since April 2017.  The families \eqref{sc:2} specialize to
\eqref{eq:munshi-aspect} upon restricting $\tau$ to be an
Eisenstein series.  The motivating applications indicated above
require also the cuspidal case of \eqref{sc:2}.
That case should now
follow from the reduction established here, leading to
subconvex bounds for \eqref{sc:2} and hence strong quantitative
forms of arithmetic quantum unique ergodicity in the prime level
aspect.  We have proposed to describe such applications jointly
with Munshi once our respective contributions have been
finalized.  An intriguing open problem is to what extent these
methods may be generalized to other aspects;
we indicate some of the challenges associated with doing so
at the end of this paper.

\subsection{Statement of main result}
Fix a number field $F$; all results are new already
when $F = \mathbb{Q}$.
Let $\mathfrak{q}$ traverse a sequence of finite primes in $F$,
with norms tending off to $\infty$.
We assume given for each such $\mathfrak{q}$
a cuspidal automorphic representation $\pi$ of
$\GL_2(\mathbb{A})$ (by convention, $\pi$ is unitary)
satisfying the following assumptions:
\begin{enumerate}[(i)]
\item The local component $\pi_{\mathfrak{q}}$
  is a twist of the special representation.
  Equivalently,
  some twist of $\pi_\mathfrak{q}$
  has unramified
  central character
  and conductor $\mathfrak{q}$.
\item $\pi$ is ``essentially unramified away from
  $\mathfrak{q}$'' in the sense that
  $\prod_{\mathfrak{p} \neq \mathfrak{q}} C(\pi_\mathfrak{p})
  = \norm(\mathfrak{q})^{o(1)}$, where the product is over all
  places $\mathfrak{p}$,
  $C(\dotsb)$ denotes the analytic conductor,
  and $o(1)$ denotes a
  quantity tending to zero as
  the norm of $\mathfrak{q}$ tends to $\infty$.
\end{enumerate}
These assumptions may seem
artificial, but
we
show in
\S\ref{sec:comments-other-aspects}
that
they are essential to a sufficiently
restricted form of our method.
Informally, they mean that (some twist of)
$\pi$ has essentially unramified central character and level
$\approx \mathfrak{q}$.
(Twisting
matters little here,
since it
does not change the quantities \eqref{sc:2} and
\eqref{eqn:reduce-to-4}.)
For example, we might take
$F = \mathbb{Q}$, so that $\mathfrak{q}$ corresponds to a prime
number $p$, and take $\pi$ corresponding to a normalized weight
two newform on $\Gamma_0(p)$.

Let $\tau$ be a cuspidal automorphic representation of
$\PGL_2(\mathbb{A})$.  We allow $\tau$, like $\pi$, to depend
upon the varying prime $\mathfrak{q}$, but our results are nontrivial
only when this dependence is mild.  For technical
convenience, we impose the following local assumption:
\begin{itemize}
\item[(PS)] Each local component of $\tau$ belongs to the principal series.
\end{itemize}
For instance, this assumption is satisfied if $\tau$ corresponds
to a spherical Maass form on
$\SL_2(\mathbb{Z}) \backslash \mathbb{H}$, which is the case
relevant for the basic ``configuration space'' forms of
quantum unique ergodicity.
This assumption could likely be removed
with further work
orthogonal to the primary
novelty of this paper
(see the
conjecture of \S\ref{sec-3-15-1}).

We will show that a subconvex bound for
$L(\pi \otimes \overline{\pi} \otimes \chi,\tfrac{1}{2})$ 
in the $\pi$-aspect with polynomial dependence
upon the Hecke character
$\chi$
implies a subconvex bound for
$L(\ad(\pi) \otimes \tau,\tfrac{1}{2})$
in the $\pi$-aspect with polynomial dependence upon $\tau$.
More precisely,
we impose the following hypothesis,
inspired by Munshi's work \cite{2017arXiv170905615M}:
\begin{itemize}
\item[(H)]
  For some fixed
  $c, A \geq 0, \delta > 0$ and all
  unitary characters $\chi$ of $\mathbb{A}^\times / F^\times$,
  \[
    |L(\pi \otimes \overline{\pi} \otimes
    \chi,\tfrac{1}{2})|
    \leq c
    C(\pi \otimes \overline{\pi} \otimes \chi)^{1/4-\delta}
    C(\chi)^{A}.
  \]
\end{itemize}
Here and henceforth ``fixed''
means ``independent of $\mathfrak{q}$.''
We will derive from this
the following conclusion:
\begin{itemize}
\item [(SC)]
  For some fixed
  $c, A \geq 0$ and $\delta > 0$,
  \[
    |L(\ad(\pi) \otimes  \tau,\tfrac{1}{2})|
    \leq c
    C(\ad(\pi) \otimes \tau)^{1/4-\delta}
    C(\tau)^{A}.
  \]
\end{itemize}
\begin{theorem}\label{thm:main-theorem-1}
  Under the stated assumptions,
  \emph{(H)} implies
  \emph{(SC)}.
\end{theorem}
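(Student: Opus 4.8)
The plan is to convert both hypotheses into statements about automorphic periods of the mass distribution $\mu := |\varphi_\pi|^2$, and then to deduce the cuspidal period bound from the Eisenstein ones by a microlocal argument localized at $\mathfrak{q}$.

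\emph{Step 1: from $L$-values to periods.} Fix an $L^2$-normalized newvector $\varphi_\pi \in \pi$ (Iwahori-fixed at $\mathfrak{q}$, unramified outside $\mathfrak{q}\infty$, with a fixed archimedean component), set $\mu := |\varphi_\pi|^2$, and let $\Psi_\tau \in \tau$. Ichino's triple product formula gives
\[
  |\langle \mu, \Psi_\tau\rangle|^2 = \frac{L(\ad(\pi)\otimes\tau,\tfrac12)\,L(\tau,\tfrac12)}{L(\ad(\pi),1)^2}\,\prod_v I_v^{\ast}(\varphi_\pi,\Psi_\tau),
\]
together with a parallel identity in which $\Psi_\tau$ is replaced by a unitary Eisenstein series $E(\chi)$ and the numerator by $|L(\chi,\tfrac12)|^2\,|L(\ad(\pi)\otimes\chi,\tfrac12)|^2$ --- this parallel identity being the Rankin--Selberg unfolding that realizes the ``Eisenstein observables'' of the title. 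Since $\pi_{\mathfrak{q}}$ is a twist of the Steinberg representation while $\tau_{\mathfrak{q}}$ and each $E(\chi)_{\mathfrak{q}}$ belong to the principal series (using (PS)), the local factor $I_{\mathfrak{q}}^{\ast}$ is an explicit power of $\norm(\mathfrak{q})$ read off from the Steinberg matrix coefficient, while the remaining $I_v^{\ast}$ contribute $\norm(\mathfrak{q})^{o(1)}$ by assumption (ii), the irrelevance of bad Euler factors (\S\ref{sec-5-2}), and bounds towards Ramanujan. Since $L(\tau,\tfrac12)$ is a fixed nonzero constant --- or, if it vanishes, after the standard substitution of a test vector detecting $L'(\tau,\tfrac12)$ --- conclusion (SC) becomes equivalent to
\[
  |\langle \mu, \Psi_\tau\rangle| \ll \norm(\mathfrak{q})^{-\delta}\,C(\tau)^{A}
\]
for a suitable $\Psi_\tau$, while (H) becomes equivalent to the same bound with $\Psi_\tau$ replaced by $E(\chi)$ and $C(\tau)$ by $C(\chi)$.

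\emph{Step 2: reduction to Eisenstein observables.} The strategy is to build, out of the newvector, a microlocal wave packet $\varphi_\pi^{\flat}$ at the place $\mathfrak{q}$ --- a suitable average of right translates of $\varphi_\pi$ by elements of $\PGL_2(F_{\mathfrak{q}})$ near a regular semisimple element --- with two properties. First, the associated mass $\mu^{\flat} := |\varphi_\pi^{\flat}|^2$ is approximately invariant under a one-parameter split-torus (``$\mathfrak{q}$-adic geodesic'') flow, at a scale that is a fixed positive power of $\norm(\mathfrak{q})$; consequently $\langle \mu^{\flat}, \Psi_\tau\rangle$ is controlled by the average of $\Psi_\tau$ over long orbits of that flow, which by the (tempered) decay of the matrix coefficients of $\tau$ --- here (PS) is used once more --- beats the trivial bound by a fixed power of $\norm(\mathfrak{q})$, with only polynomial loss in $C(\tau)$. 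Second, unfolding the wave packet shows that $\langle \mu^{\flat}, \Psi_\tau\rangle$ differs from a fixed nonzero multiple of $\langle \mu, \Psi_\tau\rangle$ by error terms which, after running the Rankin--Selberg computation of Step 1 in reverse, are dominated by periods of the shape $\langle \mu, E(\chi)\rangle$ for unitary Hecke characters $\chi$ whose conductors are bounded polynomially in $C(\tau)$ and --- crucially --- independently of $\mathfrak{q}$. These are exactly the quantities controlled by (H), so combining the two properties yields the period bound of Step 1, with the power saving in $\norm(\mathfrak{q})$ surviving and the polynomial dependence on $C(\tau)$ as required.

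\emph{Main obstacle, and routine parts.} The crux is Step 2: one must design the $\mathfrak{q}$-adic wave packet so that the approximate torus invariance holds at a scale large enough to win against $\Psi_\tau$, yet fine enough that the defect between $\mu^{\flat}$ and $\mu$ is a controlled superposition of Eisenstein observables of bounded conductor, and then quantify both sides with a genuine power saving in $\norm(\mathfrak{q})$ and merely polynomial loss in $C(\tau)$. This rests on explicit local analysis at $\mathfrak{q}$ (Steinberg and principal-series matrix coefficients and Kirillov models) and at $\infty$ (oscillatory-integral estimates / stationary phase), and it is precisely here that the local hypotheses (i)--(ii) together with (PS) are indispensable; the method collapses without them, as discussed in \S\ref{sec:comments-other-aspects}. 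The remaining ingredients --- the triple product normalizations of Step 1, the $L(\tau,\tfrac12)=0$ contingency, the removal of bad Euler factors via Ramanujan, and the final bookkeeping of conductors --- are routine by comparison.
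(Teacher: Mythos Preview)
Your proposal has a genuine gap at the crucial Step~2, and a secondary but real problem already in Step~1.

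\textbf{Step 1.} The Ichino period on $[\PGL_2]$ represents $L(\pi\otimes\overline{\pi}\otimes\tau,\tfrac12)=L(\tau,\tfrac12)\,L(\ad(\pi)\otimes\tau,\tfrac12)$, not $L(\ad(\pi)\otimes\tau,\tfrac12)$ alone. When $L(\tau,\tfrac12)=0$ the period $\langle\mu,\Psi_\tau\rangle$ vanishes identically while $L(\ad(\pi)\otimes\tau,\tfrac12)$ need not; there is no ``standard substitution of a test vector detecting $L'(\tau,\tfrac12)$'' --- the Ichino formula is a central-value formula and does not produce derivatives. The paper circumvents this by using instead Qiu's period formula on the metaplectic cover of $\SL_2$, which represents $L(\ad(\pi)\otimes\tau,\tfrac12)$ directly without the factor $L(\tau,\tfrac12)$ (see \S\ref{sec:global-shimura-integral}).

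\textbf{Step 2.} This is where the argument fails. You assert that the difference between $\langle\mu^\flat,\Psi_\tau\rangle$ and a nonzero multiple of $\langle\mu,\Psi_\tau\rangle$ is ``dominated by periods of the shape $\langle\mu,E(\chi)\rangle$''. But unfolding the wave packet expresses $\mu^\flat$ as a superposition of shifted products $R_{h_1}\varphi_\pi\cdot\overline{R_{h_2}\varphi_\pi}$; spectrally decomposing any such product on $[\PGL_2]$ produces a cuspidal contribution, and in particular a contribution from $\tau$ itself, whose size is governed by exactly the $L$-value you are trying to bound. This is the circularity the paper flags explicitly in \S\ref{sec:overview-proof}: on $\PGL_2$ there is no mechanism to suppress the cuspidal part of such a decomposition. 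Your proposal does not supply one, and the vague appeal to torus invariance does not help --- the approximate invariance controls the pairing of $\mu^\flat$ with $\Psi_\tau$, but says nothing about why the \emph{defect} $\mu^\flat-c\mu$ should project trivially onto the cuspidal spectrum.

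The paper breaks this circularity by a completely different route: it works on $\Mp_2$ with the period $\langle\varphi_1\varphi_2,\varphi_3\rangle$ where $\varphi_2=\theta(\phi_2)$ is an elementary theta function, applies Cauchy--Schwarz to $\varphi_3$ (not to $\varphi_1$), and then spectrally decomposes $|\varphi_2|^2$ on $[\SL_2]$. The decisive input is that $|\theta(\phi_2)|^2$ is \emph{orthogonal to all cusp forms} (the main result of \cite{nelson-theta-squared}, invoked in \S\ref{sec:reg-spectral-exp-theta}), so only the constant and Eisenstein contributions survive; the former is handled by amplification, the latter by hypothesis~(H). There is no analogue of this orthogonality in your $\PGL_2$ setup.
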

We formulate this slightly more precisely
in
\S\ref{sec-5-1-4}.

\begin{remark}\label{rmk:classical-interp}
  As indicated above,
  Theorem \ref{thm:main-theorem-1} is already new in the special
  case that
  \begin{itemize}
  \item  $F = \mathbb{Q}$, 
  \item $\pi$ corresponds to a weight two
    cuspidal $L^2$-normalized newform $\varphi$ on $\Gamma_0(p)$ for some large prime
    $p$, and
  \item $\tau$ corresponds to an essentially fixed Maass cusp form $\Psi$ on
    $\SL_2(\mathbb{Z})$.
  \end{itemize}
  (In this informal discussion, we refer to certain quantities
  $X$ as ``essentially fixed'': this means all estimates are
  required to depend ``polynomially''
  upon such quantities in a sense which should be clear
  in each case.)
  Under the period-to-$L$-value dictionary (see for instance
  \cite[\S1]{PDN-AP-AS-que} and references),
  the informal content of our result in this special case
  is that ``subconvex
  bounds'' for
  $\langle \varphi(z), E(z) \varphi(\ell^2 z) \rangle$, where
  \begin{itemize}
  \item $\ell \in \mathbb{Z}_{\geq 1}$ is essentially fixed,
    and
  \item $E$ is an essentially fixed unitary Eisenstein series
    on $\Gamma_0(\ell^2)$ of parameter $1/2 + i t$,
    completed with the factor $\xi(1 + 2 i t)$,
  \end{itemize}
  imply ``subconvex bounds'' (with weaker exponents) for
  $\langle \varphi, \Psi \varphi \rangle$.
  An   unconditional logarithmic savings for the latter
  was obtained in
  \cite{PDN-HQUE-LEVEL} using the Holowinsky--Soundararajan
  method \cite{MR2680499}.  

  Our result may thus be interpreted roughly as follows: to establish
  equidistribution with a power savings as
  $p \rightarrow \infty$ of the sequence of probability measures
  $\mu_{\varphi}$ on $\SL_2(\mathbb{Z}) \backslash \mathbb{H}$
  obtained by pushforward of the $L^2$-mass of $\varphi$
  (cf. \cite{PDN-HQUE-LEVEL}),
  it suffices to prove a power savings
  estimate for the quantities $\mu_\varphi(E)$
  when $E$ is a fixed Eisenstein series, together with mild generalizations of such quantities.
  
  Our result is not the first concerning equidistribution
  on arithmetic quotients
  to exhibit a distinguished role played by Eisenstein
  series.
  Earlier works
  exhibiting this role include
  \begin{enumerate}
  \item
    Lindenstrauss's \cite{MR2195133} on arithmetic quantum
    unique ergodicity (AQUE) for Maass forms on
    $\SL_2(\mathbb{Z}) \backslash \mathbb{H}$,
    through the (implicit) role played by Eisenstein series in
    conditionally
    ruling out ``escape of mass''
    prior to the unconditional results of
    Soundararajan \cite{MR2680500};
  \item Holowinsky--Soundararajan's \cite{MR2680499} on
    AQUE for holomorphic forms
    on $\SL_2(\mathbb{Z}) \backslash \mathbb{H}$
    in that the additional smoothing implemented
    by \cite[Thm 1.1]{MR2680498}
    is necessary only when testing against Eisenstein series;
  \item Einsiedler, Lindenstrauss, Michel and Venkatesh's
    \cite{MR2776363} on Duke's theorem for cubic fields, through
    the use of Eisenstein series to establish \emph{a priori}
    bounds concerning tightness and positivity of entropy.
  \item Ghosh, Reznikov and Sarnak's \cite{MR3102912} concerning
    nodal domains of Maass forms, through its invocation of
    AQUE for
    $\SL_2(\mathbb{Z}) \backslash \mathbb{H}$ tested (exclusively) against
    Eisenstein series.
  \end{enumerate}
  All of these works point to a distinguished role played by
  Eisenstein series
  in arithmetic equidistribution problems.
  The precise implication observed
  in this article seems particularly direct, striking, and
  counterintuitive.
  For instance,
  it shows also that the Eisenstein case of (prime level aspect) AQUE
  on congruence covers of $\SL_2(\mathbb{Z}) \backslash
  \mathbb{H}$
  controls the general case on compact quotients
  $\Gamma \backslash \mathbb{H}$
  attached to non-split quaternion algebras.
\end{remark}

\begin{remark}
  In the context of Remark \ref{rmk:classical-interp},
  it remains an open problem to obtain
  analogous savings on \emph{compact} arithmetic quotients
  attached to non-split quaternion algebras
  (see e.g. \cite[\S2]{2012arXiv1210.1243N}); those would likely
  follow from the proof of Theorem \ref{thm:main-theorem-1}
  and strong enough
  \emph{logarithmic} savings over the trivial bound in the
  Eisenstein case on 
  congruence covers of $\SL_2(\mathbb{Z}) \backslash \mathbb{H}$ (e.g., the Eisenstein case of \cite[Thm
  2]{PDN-AP-AS-que}
  for $q$ prime
  but with $\delta_2$ large enough).
\end{remark}

\begin{remark}
  The dependence of the quantities $\delta,A$ in (SC) upon those
  in (H) is effective. We do not explicate it here.
\end{remark}

\subsection{Overview of the proof}\label{sec:overview-proof}
Over the past couple decades,
many people have attempted to estimate
$L(\ad(\pi) \otimes \tau,\tfrac{1}{2})$ or the closely related
quantity
$L(\pi \otimes \overline{\pi } \otimes \tau,\tfrac{1}{2})$
(see \eqref{eq:factorization-1})
by embedding $\pi$ in a family $\mathcal{F}$ and trying to estimate some
(possibly amplification-weighted) moment,
such as the mean value
\begin{equation}\label{eq:mean-value-for-pi-over-family-intro}
  S(\mathcal{F}) :=
  \sum_{\pi' \in \mathcal{F}}
  L(\ad(\pi') \otimes \tau,\tfrac{1}{2}).
\end{equation}
It is known that
each summand
$L(\ad(\pi') \otimes \tau,\tfrac{1}{2})$  is nonnegative,
hence that
$L(\ad(\pi) \otimes \tau,\tfrac{1}{2}) \leq S(\mathcal{F})$.
In the
context of Remark \ref{rmk:classical-interp}, one might take
for $\mathcal{F}$ the set of weight two normalized newforms on
$\Gamma_0(p)$; a Lindel{\"o}f-consistent estimate
\begin{equation}\label{eq:hopeful-lindelof-consistent}
  S(\mathcal{F}) \ll p^{1+\eps}
\end{equation}
would then recover the
convexity bound for $L(\ad(\pi) \otimes \tau,\tfrac{1}{2})$.
In general,
one might hope to derive
a subconvex bound
from a sharp mean value estimate
over a sufficiently small (possibly amplification-weighted)
family.

Although this approach has succeeded spectacularly for superficially
similar problems (see e.g. \cite{MR1923476, MR2207235} and references), it remains a fantasy
in the present
setting: the estimates for
$L(\ad(\pi) \otimes \tau,\tfrac{1}{2})$ achieved this way fail
even to approach the convexity bound
(compare with \cite{luo-sarnak-mass, MR3554896}, for instance).
A proof of
\eqref{eq:hopeful-lindelof-consistent}
(let alone its amplified variant)
seems inaccessible by existing techniques
(cf. \cite{MR1833252}).

Alternatively, one could try to estimate
$L(\pi \otimes \overline{\pi} \otimes \tau,\tfrac{1}{2})$ by
using the triple product formula to relate it to a period
integral $\langle \varphi_1, \varphi_2 \varphi_3 \rangle$ on
$[\PGL_2] :=\PGL_2(F) \backslash \PGL_2(\mathbb{A}_F)$, with unit vectors
$\varphi_1, \varphi_2 \in \pi$ and $\varphi_3 \in \tau$,
and
applying the technique developed by Michel--Venkatesh \cite{michel-2009}
in their resolution of the subconvexity problem
for $\GL_2$.
That technique proceeds via the Cauchy--Schwarz inequality
followed by a spectral expansion
on $L^2([\PGL_2])$ (together with an ``amplification'' step
that we elide here):
\begin{equation}\label{eq:spectral-decopm-as-in-MV-sketch}
  |\langle \varphi_1, \varphi_2 \varphi_3 \rangle|^2
\leq 
\langle \varphi_2 \varphi_3, \varphi_2 \varphi_3 \rangle
= 
\langle |\varphi_2|^2, |\varphi_3|^2 \rangle
=
\int_{\psi}
\langle |\varphi_2|^2, \psi \rangle \langle \psi, |\varphi_3|^2 \rangle.
\end{equation}
One encounters in such an attempt the unfortunate
\emph{circularity}  that one cannot adequately estimate the
contribution from $\psi \in \tau$ to the RHS of
\eqref{eq:spectral-decopm-as-in-MV-sketch}
without already
knowing a subconvex bound for the quantity
$L(\pi \otimes \overline{\pi} \otimes \tau,\tfrac{1}{2})$
of primary interest.

The strategy pursued in this article is similar to that of
Michel--Venkatesh, but with triple product integrals on
$\PGL_2(\mathbb{A}_F)$ replaced by Shimura--type integrals on
the metaplectic double cover of $\SL_2(\mathbb{A}_F)$.
A circularity similar to that noted above arises in this
approach;
we manage to break it, as discussed below, by employing
a crucial observation made in \cite{nelson-theta-squared}.

Turning to details,
we use a period formula of Qiu \cite[Thm 4.5]{MR3291638} and
local estimates to derive an integral
representation
\begin{equation}\label{eq:period-integral-inrto}
  \frac{L(\ad(\pi) \otimes \tau, \tfrac{1}{2})}{
    C(\ad(\pi) \otimes \tau)^{1/4+o(1)}}
  =
  |\langle \varphi_1 \varphi_2, \varphi_3 \rangle|^2,
\end{equation}
where $\langle , \rangle$ denotes the Petersson inner product on
$[\SL_2] := \SL_2(F) \backslash \SL_2(\mathbb{A}_F)$,
$\varphi_1$ belongs to $\pi$, $\varphi_2$ is an elementary theta
function defined on the metaplectic double cover of
$\SL_2(\mathbb{A}_F)$, and $\varphi_3$ belongs to a suitable
Waldspurger lift $\theta(\tau)$ of $\tau$; moreover,
$\|\varphi_i\| = 1$ for $i=1,2,3$.  (The proof of
\eqref{eq:period-integral-inrto} uses the Shimizu
correspondence; the idea of applying it here arose as a natural
continuation of the works
\cite{2012arXiv1210.1243N,nelson-variance-73-2,
  nelson-variance-II}.)  We summarize how these vectors vary
(see \S\ref{sec:interesting-local-estimates} for details):
\begin{itemize}
\item $\varphi_1$ is essentially a newvector in the varying
  representation $\pi$.
\item $\varphi_2$ is essentially fixed.
\item $\varphi_3$ is a varying vector in the essentially fixed
  representation $\theta(\tau)$, given in the ``line model at
  $\mathfrak{q}$'' by an $L^2$-normalized multiple of the
  characteristic function of the maximal ideal.
\end{itemize}
By Cauchy--Schwarz, the RHS of \eqref{eq:period-integral-inrto}
is
bounded by
$\|\varphi_1 \varphi_2\|^2 =
\langle |\varphi_1|^2, |\varphi_2|^2 \rangle$.
We consider the
spectral expansion of the latter inner product
(regularized via \cite{nelson-theta-squared}):
\begin{equation}\label{eq:expansion}
  \langle |\varphi_1|^2, |\varphi_2|^2 \rangle
  = 
  \langle |\varphi_1|^2, 1 \rangle \langle 1, |\varphi_2|^2
  \rangle
  +
  \sum_{\psi}
  \langle |\varphi_1|^2, \psi \rangle \langle \psi, |\varphi_2|^2
  \rangle
  + (\operatorname{CSC}),
\end{equation}
where $\psi$ traverses an orthonormal basis for the space of
cusp forms on $[\SL_2]$ and (CSC) denotes the contribution of
the continuous spectrum; it is an integral
of
$\langle |\varphi_1|^2, E \rangle \langle E, |\varphi_2|^2 \rangle$
for some Eisenstein series $E$.
We estimate the RHS of \eqref{eq:expansion} as follows:
\begin{itemize}
\item[(i)] Hypothesis (H) may be seen to imply an adequate
  estimate for (CSC).  (The fact that $\varphi_2$ is essentially
  fixed forces $\langle E, |\varphi_2|^2 \rangle$ to decay
  rapidly with respect to the parameters of $E$, so it suffices
  to bound the individual factors
  $\langle |\varphi_1|^2, E \rangle$, which are related to the
  $L$-values \eqref{eqn:reduce-to-4} via Rankin--Selberg
  theory.)
\item[(ii)]
  $|\varphi_2|^2$ is orthogonal to cusp forms
  \cite{nelson-theta-squared}, so the sum over
  $\psi$ vanishes.
\item[(iii)] We ``amplify away'' the contribution of the constant
  function, as in \cite{michel-2009}.
\end{itemize}
The proof is then complete.

We highlight three reasons
why 
our approach may have been unanticipated:
\begin{enumerate}
\item The observation (ii) that
  $\langle |\varphi_2|^2, \psi \rangle = 0$ for cusp forms
  $\psi$ is the \emph{deus ex machina} needed to break the
  apparent
  circularity of the argument:
  by the triple product formula, a bound for
  $\langle |\varphi_1|^2, \psi \rangle$ with $\psi \in \tau$
  amounts to a bound for
  $L(\pi \otimes \overline{\pi } \otimes \tau,\tfrac{1}{2})$,
  hence (by the factorization
  \eqref{eq:factorization-1})
  to one for $L(\ad(\pi) \otimes \tau,\tfrac{1}{2})$.
\item The analytic theory of period integrals on $\PGL_2$ was
  developed substantially and applied to the subconvexity
  problem in work of Bernstein--Reznikov \cite{MR2726097},
  Venkatesh \cite{venkatesh-2005}, Michel--Venkatesh
  \cite{michel-2009}, and others.  We initiate here an analogous
  theory on the metaplectic double cover of $\SL_2$, which we
  have attempted to develop robustly so as to be of general use.
  Before doing so, it was not obvious that an approach as
  indicated above should exist.
\item The strategy indicated following
  \eqref{eq:mean-value-for-pi-over-family-intro} may be
  implemented in our setting by applying Cauchy--Schwarz
  following \eqref{eq:period-integral-inrto} to the vector
  $\varphi_1 \in \pi$ belonging to the varying representation,
  as in the work of Michel--Venkatesh \cite{michel-2009},
  Iwaniec--Michel \cite{MR1833252}, and many others.  We instead
  apply Cauchy--Schwarz to $\varphi_3$; this amounts to
  embedding the \emph{fixed} representation $\tau$ (rather than
  the varying representation $\pi$) into an (implicit) family.
  In this respect, our basic strategy is counter-traditional.

  A
  similarly counter-traditional application of Cauchy--Schwarz
  may be seen in the method of Bykovsky \cite{MR1433344} and its
  generalization by Fouvry--Kowalski--Michel \cite{MR3334236},
  where an algebraically-twisted sums of the Fourier
  coefficients of a fixed $\GL_2$ automorphic form are estimated
  by averaging over a varying family containing that form.  It
  seems likely to us that some of Munshi's recent arguments
  (e.g., those in \cite{MR3418527}) may also be understood from this
  perspective.
\end{enumerate}

\begin{remark}\label{rmk:annoying-restriction}
  The restriction that $\pi$ have
  ``essentially prime level and trivial central character''
  seems
  serious at the moment.
  One could just as easily treat squarefree levels
  (i.e., allowing multiple independent varying primes
  $\mathfrak{q}$), but any further extension 
  would be
  interesting.
  An analogue of Theorem \ref{thm:main-theorem-1}
  for archimedean, depth, or even ``prime-squared level''
  aspects remains open.
  
  The precise source of the present limitation of our method
  remains poorly understood by us.
  The issue is roughly that
  for aspects other than those pursued here, there do not seem
  to exist vectors for which \eqref{eq:period-integral-inrto}
  holds and for which the strategy indicated following \eqref{eq:expansion}
  succeeds
  (see \S\ref{sec:comments-other-aspects}).
  We remain hopeful that a viable further extension
  of the strategy exists.
\end{remark}


\subsection*{Acknowledgements}
We thank Ph. Michel for many helpful and extended discussions concerning the
work \cite{michel-2009}.
We gratefully acknowledge the support of
NSF grant OISE-1064866
and SNF grant SNF-137488 during some of the work leading
to this paper.
We thank the referees for helpful corrections and suggestions
on an earlier draft.

\section{Local preliminaries}
\label{sec-3}
The inputs to this section are:
\begin{itemize}
\item a local field $F$ 
  not of characteristic $2$, thus $F$ is a finite extension of either
  $\mathbb{R}$ or $\mathbb{Q}_p$ or (for $p \neq 2$) $\mathbb{F}_p(t)$;
\item a nontrivial
  unitary character $\psi : F \rightarrow \mathbb{C}^{(1)} := \{
  z \in \mathbb{C}^\times : |z| = 1 \}$; and
\item 
  some Haar measures
  on the groups $G \in \{ \GL_1(F) = F^\times, \SL_2(F), \PGL_2(F)\}$;
  we often denote simply by $\int_{g \in G} (\dotsb)$ the
  corresponding integral.
\end{itemize}
We equip $F$ with the $\psi$-self dual Haar measure $d x$.
We denote by
$|.| : F \rightarrow \mathbb{R}_{\geq 0}$ the normalized
absolute value,
so that $d(c x) = |c| \, d x$.

\subsection{Groups, measures, and general notation}
\label{sec-3-1}

\subsubsection{The metaplectic group\label{sec:local-metaplectic-group}}
\label{sec-3-1-2}
Denote by $\Mp_2(F)$ the metaplectic double cover of
$\SL_2(F)$,
defined using Kubota cocycles
as the set of all pairs
$(\sigma,\zeta) \in \SL_2(F) \times \{\pm 1\}$
with the multiplication law
$(\sigma_1,\zeta_1) (\sigma_2, \zeta_2)= (\sigma_1 \sigma_2, \zeta_1 \zeta_2 c(\sigma_1,\sigma_2))$,
where $c$ is the cocycle
\begin{equation}\label{eq:local-kubota-2}
  c(\sigma_1,\sigma_2)
  := \left(
    \frac{x(\sigma_1 \sigma_2)}{ x(\sigma_1)},
    \frac{x(\sigma_1 \sigma_2)}{ x(\sigma_2)}
  \right),
  \quad x
  \left(
    \begin{pmatrix}
      \ast & \ast \\
      c & d
    \end{pmatrix}
  \right)
  := \begin{cases}
    d & \text{ if }c = 0, \\
    c & \text{ if } c \neq 0
  \end{cases}
\end{equation}
and
$(,) : F^\times/F^{\times 2} \times F^\times/F^{\times 2}
\rightarrow \{\pm 1\}$ denotes the Hilbert symbol.
\subsubsection{Group elements}
\label{sec-3-1-3}
As generators for
$\Mp_2(F)$
we take
for $a \in F^\times, b \in F$
and $\zeta \in \{\pm 1\}$
the elements
\[
  n(b)
  = \left( \begin{pmatrix}
      1 & b \\
      & 1
    \end{pmatrix}, 1 \right),
  \quad 
  t(a)
  = \left( \begin{pmatrix}
      a &  \\
      & a^{-1}
    \end{pmatrix}, 1 \right),
\]
\[
  n'(b) := \left( \begin{pmatrix}
      1 &  \\
      b & 1
    \end{pmatrix}, 1 \right),
  \quad 
  w :=
  \left( \begin{pmatrix}
      & 1 \\
      -1 & 
    \end{pmatrix}, 1 \right),
  \quad
  \eps(\zeta) = (1,\zeta).
\]
They
satisfy the relations
$n(b_1) n(b_2) = n(b_1 + b_2)$,
$n'(b_1) n'(b_2) = n'(b_1 + b_2)$,
$t(a_1) t(a_2) = t(a_1 a_2) \eps((a_1,a_2))$,
$t(a) n(b) = n(a^2 b) t(a)$,
$t(a) n'(b) = n'(a^{-2} b) t(a)$,
$w t(a) = t(a^{-1}) w$,
$w^2 = t(-1)$,
$n'(b) =  w n(-b) w^{-1}$,
and
$w n(-a^{-1}) = n(a) t(a) w n(a) w^{-1}$.

For $y \in F^\times$, set
\[
  a(y)
  = \begin{pmatrix}
    y &  \\
    & 1
  \end{pmatrix} \in \GL_2(F).
\]
There are natural maps
\begin{equation}\label{eq:natural-maps-local-groups}
  \Mp_2(F) \rightarrow \SL_2(F) \rightarrow \GL_2(F)
  \rightarrow  \PGL_2(F).
\end{equation}
The images
in $\PGL_2(F)$ 
of 
$t(y)$
and
$a(y^2)$ 
coincide.

\subsubsection{Genuine representations}
\label{sec-3-1-4}
Recall that a
\emph{genuine} representation $\pi$ of $\Mp_2(F)$ is one
for which $\pi(\eps(\zeta)) v = \zeta v$
for all $\zeta \in \{\pm 1\}$.
An irreducible representation of $\Mp_2(F)$ is thus genuine
if and only if it does not factor through $\SL_2(F)$.

\subsubsection{Identifications}
\label{sec-3-1-5}
When
appropriate, we allow ourselves to confuse elements of
$\Mp_2(F)$ with their images in $\SL_2(F)$ and functions on
$\SL_2(F)$ with their pullbacks to $\Mp_2(F)$.
For example, given a pair $\pi_1, \pi_2$ of genuine
unitary representations of $\Mp_2(F)$
and vectors $u_1,v_1 \in \pi_1$, $u_2,v_2 \in \pi_2$,
the ``product of matrix coefficients'' function
$\Mp_2(F) \ni g \mapsto \langle g u_1, v_1 \rangle
\langle g u_2, v_2 \rangle \in \mathbb{C}$
identifies with a function
on
$\SL_2(F)$.

\subsubsection{Notation in the non-archimedean case}
\label{sec-3-1-7}
For $F$ non-archimedean,
we denote by $\mathfrak{o}$ its maximal order,
$\mathfrak{p}$ its maximal ideal,
and $q := \# \mathfrak{o}/\mathfrak{p}$.
Set $\zeta_F(s) := (1 - q^{-s})^{-1}$.

\subsubsection{``The unramified case''}
\label{sec-3-1-8}
\label{sec:local-unramified-case}
We use this phrase to signify that
\begin{itemize}
\item $F$ is non-archimedean and of odd residual characteristic (i.e., $q$ is odd),
\item the additive character $\psi$ is unramified
  (i.e., restricts trivially to $\mathfrak{o}$ but not to $\mathfrak{p}^{-1}$), and
\item the chosen Haar measures
  on $F$, $\GL_1(F) \cong F^\times, \SL_2(F)$ and $\PGL_2(F)$
  all assign volume one to maximal compact subgroups.
\end{itemize}

\subsubsection{Implied constants}
\label{sec-3-1-9}
We want implied constants to be uniform when $F$
traverses the set of completions of a given number field
and the character $\psi$ and the choices of Haar measure
traverse the local components of associated global data
(as in \S\ref{sec:global-groups-measures-norms} below).
For this reason, we call an object $X$ \emph{fixed} if it
admits the following dependencies:
\begin{itemize}
\item We always allow $X$ to depend upon the degree of the local field
  $F$, and also upon fixed quantities mentioned earlier within a given argument.
\item
  \begin{itemize}
  \item  If we are not in the unramified case,
  then we allow $X$ to depend upon $F$, $\psi$
  and all choices of Haar measures.
\item In the unramified case, we  do not allow such dependence.
  \end{itemize}
\end{itemize}
Standard asymptotic notation
is defined in terms of this notion:
\begin{itemize}
\item The equivalent
  notations $X = O(Y), X \ll Y, Y \gg X$
  signify that $|X| \leq c |Y|$
  for some fixed $c \in \mathbb{R}_{\geq 0}$;
  notation such as $X \ll_f Y$
  allows an additional dependence of $c$ upon $f$.
\item $X \asymp Y$
  signifies that $X \ll Y \ll X$.
\end{itemize}
The symbols $\eps$ and $A$ denote
respectively
a sufficiently small and large fixed positive quantity whose precise value may
change from one occurrence to another.
\subsubsection{Norms}
\label{sec-3-1-10}
For $G$ one of the groups $\SL_2$, $\PGL_2, \GL_2$,
we fix once and for all a linear embedding
$G \hookrightarrow \SL_N$
and define norms $\|.\|$ on $G(F)$
using this embedding, as in \cite[\S2.1.3,
\S2.1.7]{michel-2009}.
We extend this definition to $\Mp_2(F)$ by pulling back
under the projection $\Mp_2(F) \rightarrow \SL_2(F)$.

We also define for $g \in \PGL_2(F)$
the adjoint norm $\|\Ad(g)\|$ as in \cite[\S2.1.3]{michel-2009}
and extend this definition to the groups
$\GL_2(F), \SL_2(F), \Mp_2(F)$
by pulling back under the maps \eqref{eq:natural-maps-local-groups}.
For $y \in F^\times$,
one has $\|\Ad(t(y))\| \asymp |y|^2 + |y|^{-2}$.
\subsection{Weil representation\label{sec:weil-repn}}
\label{sec-3-2}
Denote by $\rho_\psi$
the Weil representation of $\Mp_2(F)$
attached to the additive character $\psi$.
We realize it on the Schwartz--Bruhat space $\mathcal{S}(F)$
and equip it with the unitary structure coming from $L^2(F)$.
It splits as a sum
$\rho_\psi = \rho_\psi^+ \oplus \rho_\psi^-$
of two invariant irreducible
subspaces
$\rho_\psi^+ =
\{\phi \in \mathcal{S}(F) : \phi(-x) = \phi(x)\}$
and
$\rho_\psi^- =
\{\phi \in \mathcal{S}(F) : \phi(-x) = -\phi(x)\}$.

We record the action of generators.
For $\xi \in F^\times$,
let $\psi_\xi : F \rightarrow \mathbb{C}^{(1)}$
denote the nontrivial unitary character of $F$ given by $\psi_\xi(x)
:= \psi(\xi x)$.
Let $\gamma(\psi_\xi) \in \{z \in \mathbb{C} : z^8 = 1\}$ denote
the Weil constant,
thus
$\gamma(\psi_\xi) = |2 \xi|^{1/2} \int_{x \in F} \psi(\xi x^2)
  \, d x$
as generalized functions of $\xi \in F^\times$.
Set
$\chi_{\psi}(\xi) := \gamma(\psi)/\gamma(\psi_\xi) \in \{z \in
\mathbb{C} : z^4 = 1\}$;
then $\chi_\psi(\xi_1 \xi_2) = (\xi_1,\xi_2) \chi_\psi(\xi_1) \chi_\psi(\xi_2)$.
For $\phi \in \mathcal{S}(F)$,
define $\phi^\wedge \in \mathcal{S}(F)$
by
$\phi^\wedge(y) :=
|2|_F^{1/2} \int _{F} \phi(x) \psi (2 x y) \, d x$;
the normalization
is so that
$(\phi^\wedge )^\wedge(x) = \phi(-x)$.
\begin{itemize}
\item $\rho_{\psi}(n(b)) \phi(x) = \psi(b x^2) \phi(x)$.
\item 
$\rho_{\psi}(w)
\phi =
\gamma(\psi)
\phi^\wedge$.
\item $\rho_{\psi}(t(a))
\phi(x)
= \chi_{\psi}(a)
|a|^{1/2}
\phi(a x)$.
\item $\rho_{\psi}(\eps(\zeta))
\phi
=
\zeta \phi$.
\end{itemize}

\subsection{Induced representations}
\label{sec-3-3}
Let
$\chi$ be a character of $F^\times$.
\subsubsection{$\SL_2$}
\label{sec-3-3-1}
Denote by $\mathcal{I}_{\SL_2}(\chi)$
the representation of $\SL_2(F)$ unitarily
induced by the character $t(y) \mapsto \chi(y)$
of the diagonal torus.
Its \emph{induced model}
is the space of smooth functions
$f : \SL_2(F) \rightarrow \mathbb{C}$
satisfying $f(n(x) t(y) g) = |y| \chi(y) f(g)$.
\subsubsection{$\Mp_2$}
\label{sec-3-3-2}
Denote by $\mathcal{I}_{\Mp_2}^{\psi}(\chi)$
the representation of $\Mp_2(F)$
unitarily induced by the character
$t(y) \eps(\zeta) \mapsto \zeta \chi_{\psi}(y) \chi(y)$ of the diagonal torus,
where $\chi_{\psi}$ is as in \S\ref{sec:weil-repn}.
Its \emph{induced model} 
is defined as in the $\SL_2$ case
to be the space of smooth functions
$f : \Mp_2(F) \rightarrow \mathbb{C}$
satisfying $f(n(x) \eps(\zeta) t(y) g) = |y| \zeta
\chi_{\psi}(y) \chi(y) f(g)$.
We henceforth abbreviate
$\mathcal{I}_{\Mp_2}(\chi) := \mathcal{I}_{\Mp_2}^{\psi}(\chi)$.
\subsubsection{$\PGL_2$}
\label{sec-3-3-3}
Denote by $\mathcal{I}_{\PGL_2}(\chi)$
the representation of $\PGL_2(F)$
unitarily induced by the character
$a(y) \mapsto \chi(y)$ of its diagonal torus.
Its \emph{induced model}
is the space of smooth functions
$f : \PGL_2(F) \rightarrow \mathbb{C}$
satisfying $f(n(x) a(y) g) = |y|^{1/2} \chi(y) f(g)$.
\subsubsection{The line model}
\label{sec-3-3-4}
For any of the above induced representations $\mathcal{I}_{G}(\chi)$,
the restriction map sending $f$ to the function
$F \ni x \mapsto f(n'(x))$ is injective.
We call its image the \emph{line model}.
The line model contains $C_c^\infty(F)$.
We shall occasionally use the line model to specify vectors.
\subsubsection{Unitarity}
\label{sec-3-3-5}
Let $G \in \{\SL_2(F), \Mp_2(F), \PGL_2(F)\}$.
If $\chi$ is unitary,
then the induced representation $\mathcal{I}_{G}(\chi)$
is unitary; it is also irreducible 
unless $G = \SL_2(F)$ and $\chi$ is a nontrivial quadratic character.
In any event, we normalize its unitary structure
via the line model:
$\|f\|^2 := \int_{x \in F} |f(n'(x))|^2$.

\subsubsection{From $\SL_2$ to $\PGL_2$ \label{sec:induced-rep-local-from-sl2-to-pgl2}}
\label{sec-3-3-7}
Let $\chi, \rtchi$ be characters of $F^\times$ satisfying
$\rtchi^2 = \chi$.
Let $j : \SL_2(F) \rightarrow \PGL_2(F)$
denote the natural map.
Every element of $\PGL_2(F)$
is of the form $a(y) j(s)$
for some $y \in F^\times, s \in \SL_2(F)$.
Given $f \in \mathcal{I}_{\SL_2}(\chi)$,
write
\begin{equation}\label{eq:formula-defining-f-omega-local}
  f^\rtchi (a(y) j(s)) :=
  |y|^{1/2} \rtchi(y) f(s)
\end{equation}
We verify readily
that
the formula \eqref{eq:formula-defining-f-omega-local} defines
  a function $f^\rtchi : \PGL_2(F) \rightarrow \mathbb{C}$
  for which
  $f^\rtchi \circ j = f$,
  hence that restriction of functions induces an
  $\SL_2(F)$-equivariant isomorphism
  \begin{equation}\label{eq:restriction-pgl2-sl2-local-isomorphism}
      \mathcal{I}_{\PGL_2}(\rtchi)|_{\SL_2(F)} \cong
  \mathcal{I}_{\SL_2}(\chi)
  \end{equation}
  with inverse $f \mapsto f^{\rtchi}$.
We note the following:
\begin{itemize}
  \item The isomorphism
\eqref{eq:restriction-pgl2-sl2-local-isomorphism}
identifies the respective line models.
\item 
The map $f \mapsto f^{\rtchi}$
is an isometry whenever $\chi,\rtchi$ are unitary.
\end{itemize}

\subsubsection{Metaplectic intertwining operators}\label{sec:metapl-intertw-oper}
Temporarily abbreviate $\sigma_\chi :=
\mathcal{I}_{\Mp_2}(\chi)$.
We shall have occasion to consider the standard intertwining operators
\[
  M_\chi : \sigma_\chi \rightarrow \sigma_{\chi^{-1}}
\]
\[M_\chi f(g) := \int_{x \in F} f(w n(x) g) \, d x.\]
These integrals and those that follow should be interpreted in the
usual ways, e.g., by regularized integration or meromorphic continuation in $\chi$; we suppress
discussion of this point for the sake of brevity.
We record here a detailed asymptotic study of $M_\chi$;
this will be applied subsequently
to certain technical estimates
involving complementary series representations.

Recall the local $\gamma$-factor $\gamma(\chi,s,\psi) = \eps(\chi,s,\psi) L(\chi^{-1},
1-s)/L(\chi,s)$,
characterized by Tate's local functional equation:
for each $\phi$ in the Schwartz--Bruhat space $\mathcal{S}(F)$,
\begin{equation}\label{eqn:Tate-local-func-eqn}
  \int_{x \in F^\times}
  \phi(x)
  \chi(x) |x|^s \, \frac{d x}{|x|}
  = 
  \frac{\int_{x \in F^\times}
    (\int_{y \in F} \phi(y) \psi(-y x) \, d y)
    \chi^{-1}(x) |x|^{1-s} \, \frac{d x}{|x|}}{  \gamma(\chi,s,\psi)}.
\end{equation}
It is holomorphic for $0 < \Re(\chi) + \Re(s) < 1$,
and satisfies the Stirling asymptotic (see \cite[\S3.1.12]{michel-2009})
\begin{equation}\label{eq:stirling}
  \gamma(\chi,s,\psi)
  \asymp C(\chi)^{1/2 - \Re(\chi) - \Re(s)}
  \text{ if }
  \eps < \Re(\chi) + \Re(s) < 1 - \eps
\end{equation}
(here $C(\cdot)$ denotes the analytic conductor),
the distributional identity
\begin{equation}\label{eq:LFE-cor-1}
  \int_{x \in F^\times}
  \psi(x)
  \chi(x) |x|^s \, \frac{d x}{|x|}
  = 
  \frac{\chi(-1)}{\gamma(\chi,s,\psi)}
\end{equation}
and the relation
\begin{equation}\label{eq:LFE-cor-2}
  \gamma(\chi,1/2,\psi)
  \gamma(\chi^{-1},1/2,\psi)
  = \chi(-1).
\end{equation}
We define the normalized intertwining operators
\[R_\chi := \frac{|2|_F^{-1/2} \gamma(\chi^2,0,\psi)}{\gamma(\psi) \gamma(\chi,1/2,\psi)} M_\chi,\]
which vary
holomorphically on $\{\chi : \Re(\chi) > -1/2\}$;
the normalization is justified further below.
These operators
are ``diagonalized'' by the map
\[
  \mathcal{K} : \sigma_\chi \rightarrow \{\text{functions } F^\times \rightarrow \mathbb{C}\}
\]
\[
  \mathcal{K} f(\xi) := \int_{u \in F} f(w n(u)) \psi(-\xi u) \, d u \text{ for } \xi \in F^\times,
\]
which packages
together the standard Whittaker functionals on $\sigma_\chi$;
for detailed discussion of this and what follows,
see \cite{MR2528059, MR2837718} and references.
By Fourier inversion, $\mathcal{K}$ is injective.
Its image
is closely related to the Kirillov-type model
of $\sigma_\chi$ (see \cite[p513]{MR2906917}, \cite{MR1103429});
one has
\[
  \mathcal{K} n(x) f(\xi) = \psi(\xi x) \mathcal{K} f(\xi),
  \quad 
  \mathcal{K} t(y) f(\xi) = |y| \chi_\psi(y) \chi^{-1}(y) \mathcal{K} f(y^2 \xi).
\]
Using the change of variables
$x \mapsto -1/x$ and the identity
$w n(-1/x) w n(u) = n(x) t(x) w n(x + u)$, we see that
\[
  \mathcal{K} R_\chi f(\xi)
  =
  G(\chi,\psi,\xi)
  \mathcal{K} f(\xi),
\]
where
\[
  G(\chi,\psi,\xi)
  :=
  \frac{|2|_F^{-1/2} \gamma(\chi^2,0,\psi)}{\gamma(\psi) \gamma(\chi,1/2,\psi)}
  \int_{x \in F} \psi(\xi x) \chi(x) \chi_\psi(x) \, \frac{d
    x}{|x|}.
\]
Let $\chi_\xi(x) := (x,\xi)$ denote the quadratic character
given by the Hilbert symbol.
\begin{lemma}
  One has
  \begin{equation}\label{eq:G-via-gamma}
    G(\chi,\psi,\xi)
    =
    \chi^{-1}(4 \xi) \chi_\psi(\xi)^{-1}
    \frac{\gamma(\chi \chi_\xi,1/2,\psi)}{\gamma(\chi,1/2,\psi)}.
  \end{equation}
\end{lemma}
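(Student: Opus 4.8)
The plan is to evaluate the Gauss-type integral defining $G(\chi,\psi,\xi)$ in closed form by combining Tate's local functional equation \eqref{eqn:Tate-local-func-eqn} (in the guise of \eqref{eq:LFE-cor-1} and \eqref{eq:LFE-cor-2}) with the elementary properties of the Weil index recalled in \S\ref{sec:weil-repn}. As indicated in the text, all the integrals below are interpreted by regularization or by meromorphic continuation in $\chi$; since both sides of \eqref{eq:G-via-gamma} are meromorphic in $\chi$ along any line $\chi \mapsto \chi |\cdot|^{s}$, it suffices to argue in a range where everything converges absolutely, and I suppress this routine point.

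\emph{Step 1: linearize $\chi_\psi$.} Because $|\gamma(\psi_x)| = 1$, the relation $\chi_\psi(x) = \gamma(\psi)/\gamma(\psi_x)$ may be rewritten as $\chi_\psi(x) = \gamma(\psi)\,\overline{\gamma(\psi_x)} = \gamma(\psi)\,\gamma(\psi_{-x}) = \gamma(\psi)\,|2x|_F^{1/2} \int_{t \in F} \psi(-x t^2)\, d t$, using the integral formula for the Weil index from \S\ref{sec:weil-repn}. Substituting this into $I := \int_{x \in F} \psi(\xi x) \chi(x) \chi_\psi(x)\, \tfrac{d x}{|x|}$ and applying Fubini gives
\[
  I = \gamma(\psi)\,|2|_F^{1/2} \int_{t \in F} \Bigl( \int_{x \in F^\times} \psi\bigl(x(\xi - t^2)\bigr)\, \chi(x)\, |x|_F^{1/2}\, \tfrac{d x}{|x|} \Bigr)\, d t .
\]
The inner integral, after the substitution $x \mapsto x/(\xi - t^2)$, is evaluated by \eqref{eq:LFE-cor-1} to be $\chi(-1)\,\gamma(\chi,1/2,\psi)^{-1}\,\chi(\xi - t^2)^{-1}\,|\xi - t^2|_F^{-1/2}$. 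Hence $I = \gamma(\psi)\,|2|_F^{1/2}\,\chi(-1)\,\gamma(\chi,1/2,\psi)^{-1}\, \mathcal{B}(\chi,\xi)$ with $\mathcal{B}(\chi,\xi) := \int_{t \in F} \chi(\xi - t^2)^{-1}\,|\xi - t^2|_F^{-1/2}\, d t$; feeding this into the definition of $G$ and simplifying the $\gamma$-factors with \eqref{eq:LFE-cor-2} yields $G(\chi,\psi,\xi) = \gamma(\chi^2,0,\psi)\,\dfrac{\gamma(\chi^{-1},1/2,\psi)}{\gamma(\chi,1/2,\psi)}\, \mathcal{B}(\chi,\xi)$.

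\emph{Step 2: evaluate the quadratic beta integral $\mathcal{B}(\chi,\xi)$.} This is where the quadratic character $\chi_\xi = (\cdot,\xi)$ and the Weil index $\chi_\psi(\xi)$ enter the answer: the presence of $t^2$ inside $\chi(\xi - t^2)^{-1}|\xi - t^2|_F^{-1/2}$ means that, upon pushing forward under $t \mapsto t^2$ (writing $\mathbf 1_{F^{\times 2}}$ as the average of the finitely many quadratic characters $\eta$ of $F^\times/F^{\times 2}$), one is left with a finite sum of classical local beta integrals $\int_{F} \eta(u)\, |u|_F^{-1/2}\, \chi^{-1}(\xi - u)\, |\xi - u|_F^{-1/2}\, d u$, each of which is a ratio of $\gamma$-factors by the local functional equation for beta integrals — itself a consequence of \eqref{eqn:Tate-local-func-eqn}. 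Re-summing this finite collection of $\gamma$-factors collapses it (a Hasse--Davenport-type phenomenon, provable here directly from \eqref{eq:LFE-cor-1}--\eqref{eq:LFE-cor-2}) to a single expression of the shape $(\text{elementary}) \cdot \chi_\psi(\xi)^{\pm 1}\, \chi(4\xi)^{\mp 1}\, \gamma(\chi\chi_\xi,1/2,\psi)\, \gamma(\chi^2,0,\psi)^{-1}\, \gamma(\chi^{-1},1/2,\psi)^{-1}$; combining with the prefactor from Step 1 — in which the factors $\gamma(\chi^2,0,\psi)$ and $\gamma(\chi^{\pm 1},1/2,\psi)$ are exactly what gets cancelled, which is the purpose of the normalization built into $R_\chi$ — produces \eqref{eq:G-via-gamma}. (Alternatively one may avoid the pushforward and compute $\mathcal{B}(\chi,\xi)$ directly after a fractional-linear substitution reducing $\xi - t^2 = \xi\bigl(1 - (t/\sqrt\xi)^2\bigr)$, or its quadratic-twisted analogue when $\xi \notin F^{\times 2}$, to the standard beta shape; the Weil index then enters through that change of variables.)

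\emph{Main obstacle.} The entire difficulty lies in the bookkeeping of the eighth-root-of-unity and $|2|_F$ factors: $\gamma(\psi)$, the Weil index $\gamma(\psi_\xi) = \chi_\psi(\xi)^{-1}\gamma(\psi)$, the Hilbert symbols $(\xi,\xi)$ and $(\xi,-1)$ (related by $\chi_\psi^2 = (\cdot,-1)$), and powers of $|2|_F$ — the last being precisely why residue characteristic $2$ is excluded throughout this section. One must track all of these so that exactly $\chi^{-1}(4\xi)\chi_\psi(\xi)^{-1}$ survives in front of the clean ratio $\gamma(\chi\chi_\xi,1/2,\psi)/\gamma(\chi,1/2,\psi)$, and in particular so that the sign of the exponent of $\chi_\psi(\xi)$ comes out right. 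I would pin down these constants by cross-checking the final formula in the unramified case — where $\chi_\psi$, the Hilbert symbols, and all relevant $\gamma$-factors have simple closed forms — and in the special case $\xi \in F^{\times 2}$, where $\chi_\xi \equiv 1$ and both sides of \eqref{eq:G-via-gamma} must reduce to $\chi^{-1}(4\xi)\chi_\psi(\xi)^{-1}$; in that case $\mathcal{B}(\chi,\xi)$ is, up to the factor $\chi(\xi)^{-1}$ and an elementary constant, the basic integral $\int_F \chi(1 - s^2)^{-1}\, |1 - s^2|_F^{-1/2}\, d s$, whose evaluation is the crux.
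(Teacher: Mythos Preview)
Your Step 1 is correct and is close in spirit to the computation the paper sketches in its footnote. The gap is in Step 2: the ``collapse'' of the finite sum of local beta integrals into a single ratio of $\gamma$-factors is asserted but not carried out. You describe it as ``a Hasse--Davenport-type phenomenon, provable here directly from \eqref{eq:LFE-cor-1}--\eqref{eq:LFE-cor-2}'' and then propose to pin down the constants by cross-checking in the unramified case and for $\xi\in F^{\times 2}$. That is a consistency check, not a proof; the actual identity you would need,
\[
  \mathcal{B}(\chi,\xi)
  \;=\;
  \chi^{-1}(4\xi)\,\chi_\psi(\xi)^{-1}\,
  \frac{\gamma(\chi\chi_\xi,1/2,\psi)}
       {\gamma(\chi^2,0,\psi)\,\gamma(\chi^{-1},1/2,\psi)},
\]
is precisely where the quadratic twist $\chi_\xi$ and the Weil index $\chi_\psi(\xi)$ must emerge, and your outline does not explain how summing beta integrals over all quadratic characters $\eta$ singles out the one quadratic character $\chi_\xi$ attached to $\xi$.

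The paper avoids this difficulty entirely by making the substitution $x\mapsto x/\xi$ \emph{before} linearizing. The cocycle relation for the Weil index, $\chi_\psi(x\xi^{-1})=(x,\xi)\,\chi_\psi(x)\,\chi_\psi(\xi^{-1})$, immediately gives
\[
  \int_{x}\psi(\xi x)\,\chi(x)\,\chi_\psi(x)\,\frac{dx}{|x|}
  \;=\;
  \chi^{-1}(\xi)\,\chi_\psi(\xi^{-1})
  \int_{x}\psi(x)\,(\chi\chi_\xi)(x)\,\chi_\psi(x)\,\frac{dx}{|x|},
\]
reducing everything to the case $\xi=1$ with $\chi$ replaced by $\chi\chi_\xi$ (note $(\chi\chi_\xi)^2=\chi^2$). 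The single remaining identity is \eqref{eq:metaplectic-coef}, and the footnote proves it by a manipulation similar to your Step 1 but with the substitution $x\mapsto -y^2/4x$, which factors the double integral as a \emph{product} of two one-variable Tate integrals, each evaluated by \eqref{eq:LFE-cor-1}. No beta integral, no sum over $F^\times/F^{\times 2}$. If you insert this $x\mapsto x/\xi$ step at the very beginning of your argument, your $\mathcal{B}(\chi,\xi)$ is replaced by $\mathcal{B}(\chi\chi_\xi,1)$, and Step 2 becomes the evaluation of a single integral rather than a sum.
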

\begin{proof}
  This is essentially the main result of \cite{MR2528059} and
  \cite[\S6]{MR3544420}.
  More precisely, the latter is equivalent to the identity\footnote{
      For the convenience of the reader and as a check of
  normalizations, we sketch a proof of
  \eqref{eq:metaplectic-coef} modulo convergence issues, which
  may be addressed by suitably interpreting each step as an
  identity of distributions on $F^\times$ or $F^\times \times F^\times$.  
  We apply Fourier inversion to $F \ni b \mapsto \psi(b^2 x)$
  to obtain
  $\psi(x) = \int_{y \in F} \psi(y) (\int_{b \in F} \psi(b^2 x - b y) \, d b ) \, d y$.
  By completing the square and using the identity $\gamma(\psi_x)
  = |2 x|^{1/2} \int_{b \in F} \psi(x b^2) \, d b$,
  we deduce that
  $\psi(x) \chi_\psi(x)
    =
    \gamma(\psi) |2 x|^{-1/2}
    \int_{y \in F} \psi(y - \frac{y^2}{4 x}) \, d y$.
  We insert this into the LHS of \eqref{eq:metaplectic-coef},
  giving
  \[
    \int_{x \in F} \psi(x) \chi(x) \chi_\psi(x) \, \frac{d
      x}{|x|}
    =
    \gamma(\psi) 
    \int_{x,y \in F}
    |2 x|^{-1/2}
    \chi(x)
    \psi(y - \frac{y^2}{4 x})
    \, \frac{d x}{|x|}
    \, d y.
  \]
  We apply the substitution
  $x \mapsto -y ^2/ 4 x$.
  The result factors as a product of an $x$-integral and a
  $y$-integral.
  We apply  \eqref{eq:LFE-cor-1}
  to each.
  Using \eqref{eq:LFE-cor-2},
  we readily obtain the required identity.
}
  \begin{equation}\label{eq:metaplectic-coef}
    \int_{x \in F} \psi( x) \chi(x) \chi_\psi(x) \, \frac{d
      x}{|x|}
    =
    \gamma(\psi)
    |2|_F^{1/2}
    \chi^{-1}(4)
    \frac{\gamma(\chi,1/2,\psi)}{\gamma(\chi^2,0,\psi)}.
  \end{equation}
  One may apply to the integral in the definition
  of $G(\chi,\psi,\xi)$
  the change of variables
  $x \mapsto x/\xi$ and the identity
  $\chi_\psi(x/\xi) = \chi_\psi(\xi)^{-1} \chi_\psi(x) \chi_\xi(x)$
  to derive \eqref{eq:G-via-gamma} from \eqref{eq:metaplectic-coef}.
\end{proof}
It follows in particular that
\begin{equation}\label{eq:R_chi-isom}
  R_{\chi^{-1}} \circ R_\chi = 1,
\end{equation}
which is equivalent to identities stated in \cite[4.11,
4.17]{MR2876383}.

\begin{lemma}\label{lem:asymp-for-G}
  Suppose that
  $\chi = \eta |.|^c$, where
  $\eta^2 = 1$
  and
  $c$ is a real number
  with $|c| \leq 1/2 - \eps$ for some fixed $\eps > 0$.
  Then
  $G(\chi,\psi,\xi) > 0$
  and  
  \begin{equation}\label{eq:G-asymp}
    G(\chi,\psi,\xi) =
    |\xi|^{-c} (C(\eta \chi_\xi)/C(\eta))^{-c}
    \beta(\chi,\psi,\xi),
  \end{equation}
  where
  \begin{enumerate}[(i)]
  \item $\beta(\chi,\psi,\xi) \asymp 1$,
  \item $\beta(\chi,\psi,\xi) = \beta(\chi,\psi,\xi z^2)$
    for all $z \in F^\times$,
  \item $\beta(\chi,\psi,\xi) > 0$, and
  \item if $c = 0$, then $G(\chi,\psi,\xi) = \beta(\chi,\psi,\xi) = 1$.
  \end{enumerate}
\end{lemma}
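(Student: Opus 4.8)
The plan is to read everything off the closed form \eqref{eq:G-via-gamma} after substituting $\chi=\eta\,|.|^{c}$. With this substitution $\chi^{-1}(4\xi)=\eta(4\xi)\,|4|_F^{-c}\,|\xi|^{-c}$, where $\eta(4\xi)\in\{\pm1\}$ and $|4|_F^{-c}$ is a fixed positive constant (equal to $1$ when $c=0$); $\chi_\psi(\xi)^{-1}$ is a fourth root of unity; and, since $\eta$ and $\eta\chi_\xi$ are unitary with bounded exponent, the Stirling asymptotic \eqref{eq:stirling} applies to both the numerator and the denominator of the $\gamma$-ratio (the hypothesis $|c|\le 1/2-\eps$ guarantees $\eps<\Re(\eta\chi_\xi|.|^{c})+\tfrac12<1-\eps$ after possibly shrinking $\eps$), giving $\gamma(\eta\chi_\xi|.|^{c},\tfrac12,\psi)\asymp C(\eta\chi_\xi)^{-c}$ and $\gamma(\eta|.|^{c},\tfrac12,\psi)\asymp C(\eta)^{-c}$, hence $\gamma(\eta\chi_\xi|.|^{c},\tfrac12,\psi)/\gamma(\eta|.|^{c},\tfrac12,\psi)\asymp (C(\eta\chi_\xi)/C(\eta))^{-c}$. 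I would then \emph{define} $\beta(\chi,\psi,\xi)$ to be precisely $G(\chi,\psi,\xi)$ divided by $|\xi|^{-c}\,(C(\eta\chi_\xi)/C(\eta))^{-c}$, so that \eqref{eq:G-asymp} holds by construction; the estimates just listed then immediately give $|\beta(\chi,\psi,\xi)|\asymp 1$, which is part (i).

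The genuinely non-formal step, and the one I expect to be the main obstacle, is positivity: $G(\chi,\psi,\xi)>0$. This is what upgrades $|\beta|\asymp1$ to $\beta\asymp1$ and yields (iii), since the remaining factors $|\xi|^{-c}$ and $(C(\eta\chi_\xi)/C(\eta))^{-c}$ are manifestly positive reals. The size estimate above only controls $|G|$; the delicate point is the sign of the product $\eta(4\xi)\,\chi_\psi(\xi)^{-1}\,\gamma(\eta\chi_\xi|.|^{c},\tfrac12,\psi)/\gamma(\eta|.|^{c},\tfrac12,\psi)$. I see two routes. The conceptual one is to recall that for $\chi=\eta|.|^{c}$ with $|c|<1/2$ the representation $\sigma_\chi=\mathcal{I}_{\Mp_2}(\chi)$ is a unitarizable complementary-series representation and that $R_\chi$ is, up to a positive scalar, the intertwiner to its Hermitian dual realizing the complementary-series inner product — this is exactly the property the normalization of $R_\chi$ was designed to have — so its eigenvalues $G(\chi,\psi,\xi)$ on the Whittaker-diagonalized model $\mathcal{K}$ are positive. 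The hands-on route, which is what I would write out in detail, is to return to the integral in the definition of $G(\chi,\psi,\xi)$ (equivalently to \eqref{eq:G-via-gamma}) and, using the distributional functional equations \eqref{eq:LFE-cor-1} and \eqref{eq:LFE-cor-2}, the cocycle relation $\chi_\psi(\xi_1\xi_2)=(\xi_1,\xi_2)\chi_\psi(\xi_1)\chi_\psi(\xi_2)$, the hypothesis $\eta^{2}=1$, and the standard evaluations of local Gauss sums and Weil indices recorded in \cite{MR2528059, MR3544420}, rewrite the product as a manifestly nonnegative expression; these are precisely the ingredients that make the potentially dangerous signs cancel.

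The remaining assertions are then bookkeeping. For the square-invariance (ii), I would use the elementary identity $\gamma(\psi_{z^{2}})=\gamma(\psi)$ (change variables $x\mapsto x/z$ in the integral defining the Weil index), which gives $\chi_\psi(z^{2})=1$ and hence $\chi_\psi(\xi z^{2})=(z^{2},\xi)\chi_\psi(\xi)=\chi_\psi(\xi)$; together with $\chi_{\xi z^{2}}=\chi_\xi$ and $\chi^{-1}(4\xi z^{2})=|z|^{-2c}\chi^{-1}(4\xi)$, substitution into \eqref{eq:G-via-gamma} yields $G(\chi,\psi,\xi z^{2})=|z|^{-2c}G(\chi,\psi,\xi)$, while the explicit prefactor scales the same way, $|\xi z^{2}|^{-c}(C(\eta\chi_{\xi z^{2}})/C(\eta))^{-c}=|z|^{-2c}|\xi|^{-c}(C(\eta\chi_\xi)/C(\eta))^{-c}$, so the quotient $\beta$ is unchanged. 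Finally, for (iv): when $c=0$ the prefactor is $1$, so $\beta=G$; and feeding \eqref{eq:R_chi-isom} through the relation $\mathcal{K}R_\chi f(\xi)=G(\chi,\psi,\xi)\,\mathcal{K}f(\xi)$ and the injectivity of $\mathcal{K}$ gives $G(\chi^{-1},\psi,\xi)\,G(\chi,\psi,\xi)=1$ for all $\xi$, which for the self-dual character $\chi=\eta$ forces $G(\eta,\psi,\xi)^{2}=1$, hence $G(\eta,\psi,\xi)=\beta(\chi,\psi,\xi)=1$ once positivity is in hand.
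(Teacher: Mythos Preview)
Your outline for (i), (ii) and (iv) is correct and matches the paper's: define $\beta$ as the quotient, read (i) off Stirling \eqref{eq:stirling}, check (ii) by the square-class invariance you describe, and obtain (iv) from positivity plus $|G(\eta,\psi,\xi)|=1$ (the paper reads this directly off \eqref{eq:G-via-gamma}; your route via $R_{\chi^{-1}}R_\chi=1$ is equally valid).

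For the positivity (iii), both of your routes need sharpening. The conceptual one is delicate: knowing abstractly that $\sigma_\chi$ is unitarizable tells you only that \emph{some} normalization of the intertwiner gives a positive-definite pairing; deducing that the paper's specific $\gamma$-factor normalization $R_\chi$ is that one (up to a \emph{positive} scalar) is precisely the content of $G>0$, so you risk circularity unless you import this from an external source that pins down the normalization. The paper instead takes the hands-on route, but with a specific key identity that your sketch does not name. One writes $\eta=\chi_t$ for some $t\in F^\times$, uses that for a quadratic character $\alpha$ and real $|c|<1/2$ the ratio $\gamma(\alpha|.|^c,\tfrac12,\psi)/\gamma(\alpha,\tfrac12,\psi)$ is a positive real (local $L$- and $\eps$-factor computation), and then reduces the sign of $G$ via the cocycle relation for $\chi_\psi$ to the sign of
\[
\frac{\chi_\psi(t)}{\chi_\psi(t\xi)}\cdot\frac{\gamma(\chi_{t\xi},\tfrac12,\psi)}{\gamma(\chi_t,\tfrac12,\psi)},
\]
which collapses to $1$ by the Weil--Tate identity
\[
\chi_\psi(\xi)=\gamma(\chi_\xi,\tfrac12,\psi)\quad\text{for all }\xi\in F^\times.
\]
This identity --- not the formula \eqref{eq:metaplectic-coef} from the references \cite{MR2528059,MR3544420} you cite --- is the missing ingredient; the paper sources it to \cite{doi:10.1080/00927872.2017.1399407} and \cite[App.~B]{MR546613}.
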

\begin{proof}
  (i) and (ii) follow from \eqref{eq:G-via-gamma} and
  \eqref{eq:stirling},
  while (iv) follows from (iii)
  and the consequence $|G(\eta,\psi,\xi)| = 1$
  of \eqref{eq:G-via-gamma}.
  We now verify (iii).
  Write $A \sim B$ to denote that $A/B > 0$,
  and write $\eta = \chi_t$ for some $t \in F^\times$.
  Then
  $\gamma(\chi \chi_{\xi}, 1/2,
  \psi)/\gamma(\chi,1/2,\psi)
  \sim \gamma(\chi_{t \xi}, 1/2,
  \psi)/\gamma(\chi_{t},1/2,\psi)$
  and $\chi^{-1}(4 \xi) \chi_\psi(\xi)^{-1}
  \sim \chi_t(\xi) \chi_\psi(\xi)^{-1}
  = \chi_\psi(t) / \chi_\psi(t \xi)$,
  so
  the positivity
  follows from the identity
  \begin{equation}\label{eqn:Weil-vs-Tate}
    \chi_{\psi}(\xi) = \gamma(\chi_{\xi},1/2,\psi)
    \text{ for all } \xi \in F^\times,
  \end{equation}
  or equivalently,
  $\chi_{\psi}(\xi) \gamma(\chi_{\xi},1/2,\overline{\psi}) = 1$,
  for which we refer to
  \cite{doi:10.1080/00927872.2017.1399407}
  (see also \cite[App. B, (2d)]{MR546613}
  and \cite{MR848382}).
\end{proof}

\subsection{Unramified representations and vectors\label{sec:local-unram-stuff}}
\label{sec-3-4}
Assume for \S\ref{sec:local-unram-stuff} that $F$ is
non-archimedean.
As usual,
we say that a vector in a representation of one of the groups
$\GL_1(F) = F^\times, \SL_2(F), \PGL_2(F)$ is 
\emph{unramified} if it is invariant by the standard maximal
compact subgroup.
In the unramified case
(\S\ref{sec:local-unramified-case}),
a vector in a
representation of $\Mp_2(F)$ is \emph{unramified} if it is
invariant by the image of the standard lift to $\Mp_2(F)$ of the
standard maximal compact subgroup of $\SL_2(F)$ (see
e.g. \cite[\S4.5]{nelson-theta-squared}).  A
representation  of
any of the above groups is \emph{unramified} if it is
irreducible and contains a nonzero unramified vector.
For example:
\begin{itemize}
\item The unramified characters of $F^\times$
  are of the form $|.|^s$ for $s \in \mathbb{C}$.
\item $\rho_{\psi}^+$ is unramified
  in the unramified case.
\item 
  For an unramified character
  $\chi$ of $F^\times$
  and $G \in \{\SL_2, \PGL_2, \Mp_2\}$,
  the induced representation
  $\mathcal{I}_G(\chi)$
  is unramified if it is irreducible
  and if,
  when
  $G = \Mp_2$,
  we are in the unramified case.
\end{itemize}

\subsection{Whittaker models}
\label{sec-3-5}
Recall that an irreducible representation
$\pi$ of $\GL_2(F)$ is called \emph{generic}
if for some (equivalently, any) nontrivial unitary character $\psi'$ of $F$,
$\pi$ admits a $\psi'$-Whittaker model, i.e.,
a realization in the space of functions
$W : G \rightarrow \mathbb{C}$
satisfying $W(n(x) g) = \psi'(x) W(g)$
on which $G$ acts by right translation;
this is the case precisely when $\dim(\pi) \neq 1$.

The restriction map sending $W$ to the function
$F^\times \ni y \mapsto W(a(y))$ is injective.
Its image is called the \emph{Kirillov model} (or more verbosely, $\psi'$-Kirillov model).
The Kirillov model contains $C_c^\infty(F^\times)$.

Assume that $\pi$ is unitary.  Then the the norm $\|W\|^2 := \int_{y \in
  F^\times} |W(a(y))|^2$
is $G$-invariant.
When we write ``let $\pi$ be a generic unitary representation of
$\GL_2(F)$,
realized in its $\psi'$-Whittaker model,''
we always normalize the unitary structure in this way.

\subsection{Local Waldspurger packets\label{sec:local-waldspurger-packets}}
\label{sec-3-6}
Given an irreducible representation $\tau$ of $\PGL_2(F)$, one
may define (see \cite{MR1103429, gan-shim-a-la-wald})
a Waldspurger
packet $\Wd_{\psi}(\tau)$ consisting of either one or two
genuine irreducible representations of $\Mp_2(F)$; it is denoted
$\{\sigma^+\}$ in the former case and $\{\sigma^+, \sigma^-\}$
in the latter, where the labeling by $\pm $ is defined using the
local $\psi$-theta correspondence and Jacquet--Langlands
correspondence.  One has $\# \Wd_{\psi}(\tau) = 2$ if
and only if $\tau$ belongs to the discrete series.
If $\pi$ is generic, then the $\sigma^{\pm}$
are not isomorphic to even Weil
representations
$\rho_{\psi '}^{+}$.

If $\tau$ is a generic irreducible \emph{principal series}
representation
of $\PGL_2(F)$,
say
$\tau = \mathcal{I}_{\PGL_2}(\chi)$,
then $\Wd_{\psi}(\tau)$ is a singleton $\{\sigma\}$
consisting of the generic irreducible principal series
representation
$\sigma = \mathcal{I}_{\Mp_2}(\chi)$
of $\Mp_2(F)$.

Every generic unramified irreducible
representation
$\tau$ of $\PGL_2(F)$
is
of the form
$\mathcal{I}_{\PGL_2}(\chi)$
for some unramified character $\chi$.
Its Waldspurger packet
is the singleton $\{\sigma^+ = \sigma\}$
with $\sigma \cong \mathcal{I}_{\Mp_2}(\chi)$.
In the unramified case, $\sigma$ is unramified.

\subsection{Complementary series\label{sec:local-temperedness}}
\label{sec-3-7}

\subsubsection{Definitions}
Let
$G \in \{\Mp_2(F), \PGL_2(F), \GL_2(F)\}$.
Let
$\pi$ be an irreducible unitary representation of $G$,
assumed genuine in the case $G = \Mp_2(F)$.
For $c \in (0,1/2)$,
we say that $\pi$
is a \emph{complementary series of parameter $c$}
if:\footnote{
  For $G \in \{\PGL_2(F), \GL_2(F)\}$,
  these exhaust the non-tempered generic unitary
  representations.
  For $G = \Mp_2(F)$ and $F \neq \mathbb{C}$,
  they exhaust the non-tempered genuine irreducible unitary
  representations that are not isomorphic to even Weil
  representations $\rho_{\psi '}^+$,
  or equivalently, those irreducible representations belonging
  to the Waldspurger packet
  of a non-tempered generic irreducible unitary representation.
  When $F = \mathbb{C}$, there are also genuine complementary series
  representations of $\Mp_2(\mathbb{C}) \cong \SL_2(\mathbb{C})
  \times \{\pm 1\}$
  of parameter
  $1/2 < c < 1$,
  which play no role here.
}
\begin{itemize}
\item For $G \in \{\Mp_2(F), \PGL_2(F)\}$,
  there is a quadratic character $\eta$ of $F^\times$
  so that
  $\pi \cong \mathcal{I}_G(|.|^c \eta)$.
\item For $G = \GL_2(F)$, there is a unitary character $\xi$ of
  $F^\times$ so that $\pi$ is isomorphic to the unitarily
  normalized induction of $(|.|^c \xi, |.|^{-c} \xi)$.
\end{itemize}

\subsubsection{Temperedness}
For $\vartheta \in (0,1/2)$, we say that $\pi$ is
\emph{$\vartheta$-tempered} if either $\pi$ is tempered
(see \cite{MR946351}) or $\pi$ is a
complementary series of parameter $c \leq \vartheta$.
We henceforth set $\vartheta := 7/64$.
It is known then that if $G \in
\{\PGL_2(F), \GL_2(F)\}$
and $\pi$ is
obtained as the local component of a cuspidal automorphic
representation, then $\pi$ is $\vartheta$-tempered \cite{MR1937203, MR2811610};
if $G = \PGL_2(F)$ and $\sigma \in \Wd_{\psi}(\pi)$,
then it follows from the discussion of \S\ref{sec:local-waldspurger-packets}
that $\sigma$ is $\vartheta$-tempered.

\begin{remark*}
  We make use of the fact that $\vartheta < 1/6$
  to address some technical convergence issues in our local
  arguments.
  Assuming that those issues can be addressed
  more generally,
  it seems that any value $\vartheta < 1/4$
  would suffice for the purposes of proving Theorem
  \ref{thm:main-theorem-1}.
\end{remark*}

\subsubsection{Unitarity}\label{sec:local-compl-series}
Temporarily abbreviate
$\sigma_\chi := \mathcal{I}_{\Mp_2}(\chi)$ for a character
$\chi$ of $F^\times$.
We have sesquilinear pairings
$(,) : \sigma_\chi \otimes \sigma_{\overline{\chi^{-1}}} \rightarrow
\mathbb{C}$
given by $(f_1,f_2) := \int_{x \in F} f_1(n'(x)) \overline{f_2(n'(x))}$.
We noted in \S\ref{sec-3-3-5} that if $\chi$ is unitary,
so that $\chi = \overline{\chi }^{-1}$, then
$\sigma_{\chi}$ is unitary, with invariant norm
$\|f\|^2
= (f,f)$,
which may be expressed
in terms of the Kirillov-type map $f \mapsto \mathcal{K} f$ from \S\ref{sec:metapl-intertw-oper}
as $\|f\|^2 =  \int_{\xi \in F^\times} |\mathcal{K} f(\xi)|^2 \, d \xi$.

Suppose now that $\sigma_\chi$ is a complementary
series of parameter $c \in (0,1/2)$.
We then have $\chi = |.|^{\pm c} \eta$ for some quadratic character $\eta$ of
$F^\times$,
and
$\|f\|^2 := (R_{\chi} f, f)$ ($f \in \sigma_\chi$) defines an
invariant norm
(cf. \S\ref{sec:metapl-intertw-oper} and \cite[p271, p278]{MR2876383});
in terms of $f \mapsto \mathcal{K} f$ from \S\ref{sec:metapl-intertw-oper},
\[
\|f\|^2
= \int_{\xi \in F^\times }
|\mathcal{K} {f}(\xi)|^2
G(\chi,\psi,\xi) \, d \xi.
\]
(Recall from \S\ref{sec:metapl-intertw-oper} that $G(\chi,\psi,\xi) > 0$.)

\subsection{Newvectors\label{sec:local-newvectors}}
\label{sec-3-8}
Assume in \S\ref{sec:local-newvectors} that $F$ is non-archimedean.

\subsubsection{Notation}
\label{sec-3-8-1}
For a generic representation $\pi$ of $\GL_N(F)$,
the conductor $C(\pi)$ may be written $q^{c(\pi)}$
for some $c(\pi) \in \mathbb{Z}_{\geq 0}$.

For $n \in \mathbb{Z}_{\geq 0}$,
denote by $K_0[n]$ the subgroup of $\GL_2(\mathfrak{o})$
consisting of elements with lower-left entry in $\mathfrak{p}^{n}$.

\subsubsection{Summary of newvector theory}
\label{sec-3-8-2}
Let $\pi$ be a generic irreducible representation
of $\GL_2(F)$ with central character $\omega_{\pi} : F^\times
\rightarrow \mathbb{C}^\times$.
It is known that $c(\pi) \geq c(\omega_{\pi})$.
For $n \geq c(\omega_{\pi})$ denote also by $\omega_{\pi}$
the character of $K_0[n]$
given by
\[
\omega_{\pi}(\begin{pmatrix}
  a & b \\
  c & d
\end{pmatrix}) :=
\begin{cases}
  \omega_\pi(d) & \text{ if } d \in \mathfrak{o}^\times, \\
  1 & \text{ otherwise}
\end{cases}
\]
and by $\pi[n]$ the space
of vectors $\varphi \in \pi$
satisfying
$\pi(g) \varphi = \omega_{\pi}(g) \varphi$
for all $g \in K_0[n]$.
It is known that
$\dim \pi[n] = \min(0,1 + n - c(\pi))$.
A \emph{newvector} is a nonzero element of the one-dimensional space
$\pi[c(\pi)]$.

\subsubsection{Principal series}
\label{sec-3-8-3}
Let $\chi$ be a character of $F^\times$
with $\chi^2 \neq |.|^{\pm 1}$.
Then $\pi := \mathcal{I}_{\PGL_2}(\chi)$
is irreducible and generic with $c(\pi) = 2 c(\chi)$.

\subsubsection{The special representation}
\label{sec-3-8-4}
Let $\Sp$ denote the special
representation of $\PGL_2(F)$; it is the irreducible quotient of
$\mathcal{I}_{\PGL_2}(|.|^{1/2})$
and satisfies $c(\pi) = 1$.
Assume that $\psi$ is unramified.
A newvector may then be given in the Kirillov model of $\pi$
by the formula $W(a(y)) := 1_{\mathfrak{o}}(y) |y|$
(see for instance \cite{Sch02}).

\subsection{Sobolev norms}
\label{sec-3-9}
Given an irreducible unitary representation $\pi$ of
one of the groups $\SL_2(F), \Mp_2(F), \PGL_2(F), \GL_2(F)$,
we define a family of Sobolev norms $\mathcal{S}_d$ ($d \in
\mathbb{R}$)
on $\pi$ following the recipe of   \cite[\S2]{michel-2009} and \cite[\S4.6, \S5.3]{nelson-theta-squared}.
They have the form $\mathcal{S}_d(v) := \|\Delta^d v\|$,
where $\Delta$ is a certain invertible positive self-adjoint linear operator.
We write $\mathcal{S}$ to denote
a Sobolev norm of the form $\mathcal{S}_d$ for some
unspecified fixed large enough $d$
(the ``implied index'').

\begin{remark*}
  To follow the main arguments of the paper,
it suffices to keep in mind the following property
of the Sobolev norms: if $F$ is
non-archimedean and $\varphi$ is a vector invariant by the $r$th
standard principal congruence subgroup, then
$\mathcal{S}(\varphi) = \|\varphi \| q^{O(r)}$.
\end{remark*}

\subsection{Change of polarization\label{sec:local-I-chi}}
\label{sec-3-10}
Let $\chi$
be a character of $F^\times$ with $\Re(\chi) > -1$.
In \cite[\S4.12]{nelson-theta-squared},
we defined an $\Mp_2(F)$-equivariant map
$I_\chi : \rho_{\psi} \otimes \overline{\rho_{\psi}}
\rightarrow \mathcal{I}_{\SL_2}(\chi)$.
The definition is not important
for our present purposes,
but we record it for convenience:
for $\phi \in \rho_{\psi} \otimes \overline{\rho_\psi}$,
the function
$I_\chi(\phi) : \SL_2(F) \rightarrow \mathbb{C}$
is given by the Tate integral
\[
\mathcal{I}_\chi(\phi)(\sigma)
=
\int_{y \in k^\times} |y| \chi(y)
\mathcal{F} \phi(y e_2 \sigma) \, d^\times y,
\]
where
$\mathcal{F} \phi$ is the partial
Fourier transform
\[
\mathcal{F} \phi(y_1,y_2)
:=
\int_{t \in k}
\phi(\rho(y_1,t))
\psi(y_2 t) \, d t
=
\int_{t \in k}
\phi(\frac{y_1 + t}{2}, \frac{y_1 - t}{2})
\psi(y_2 t) \, d t.
\]
For our purposes,
the relevant properties of the intertwiner $I_\chi$
are the following
(see \cite[\S4.12]{nelson-theta-squared}):
\begin{itemize}
\item If we are in the unramified case
  and if $\chi$ is unramified,
  then 
  $L(\chi,1)^{-1} I_\chi$ preserves unramified elements;
  more precisely, it sends $1_\mathfrak{o} \otimes
  1_\mathfrak{o}$
  to the unramified vector
  taking the value $1$ at the identity.
\item If $\chi$ is unitary, then
  for each fixed $d$ one has
  $\mathcal{S}_{d}(I_\chi(\phi_1 \otimes \phi_2))
  \ll \mathcal{S}(\phi_1) \mathcal{S}(\phi_2)$.
\end{itemize}
\subsection{Bounds for matrix coefficients and varia\label{sec:bounds-mx-coefs}}
\label{sec-3-11}

\subsubsection{}
Let
$\Xi : \PGL_2(F) \rightarrow \mathbb{R}^\times_+$ denote the
Harish--Chandra function for $\PGL_2(F)$, i.e., the
matrix coefficient of the spherical vector in the
normalized induction of the trivial character of the Borel,
normalized so that $\Xi(1) = 1$.
Explicitly,
if we define $\htt : \PGL_2(F) \rightarrow \mathbb{R}^\times_+$
using the Iwasawa decomposition by the formula
$\htt(n(x) a(y) k) := |y|$,
then
\[
\Xi(g) = \int_{k \in K_{\PGL_2(F)}} \htt(k g)^{1/2},
\]
where the integral is taken with respect to the probability Haar
on the standard maximal compact subgroup $K_{\PGL_2(F)}$ of $\PGL_2(F)$.
One has
\begin{equation}
  \|\Ad(s)\|^{-1} \ll \Xi(s) \ll \|\Ad(s)\|^{-1+\eps}.
\end{equation}
More precisely,
with respect to the Cartan decomposition $s = k_1 a(y) k_2$,
one has $\|\Ad(s)\| \asymp t := |y| + |y|^{-1}$
and $\Xi(s) \asymp t^{-1} \log(t)$.

We denote also by $\Xi$ its pullback to $\SL_2(F)$ or to
$\Mp_2(F)$.  (The pullback to $\SL_2(F)$ is the Harish--Chandra
function for $\SL_2(F)$.)

By \cite{MR946351},
the function $\Xi$ controls the matrix coefficients of any
tempered irreducible unitary representation $\pi$ of
$G \in \{\Mp_2(F), \GL_2(F)\}$: for $s \in G$
and $\varphi, \varphi ' \in \pi$,
one has $\langle \pi(s) \varphi, \varphi ' \rangle
\ll \Xi(s) \mathcal{S}(\varphi) \mathcal{S}(\varphi ')$.
We record below the generalizations and refinements of this
estimate in the $\vartheta$-tempered case.

One has $\int_G \Xi^{2+\eps} < \infty$ for any $G$ as above.

\subsubsection{}
Let $\pi$ be a $\vartheta$-tempered irreducible unitary
representation of $\GL_2(F)$;
in particular, $\pi$ is generic.
For each $W$ in the Whittaker model
$\mathcal{W}(\pi,\psi')$
corresponding to some fixed nontrivial unitary character
$\psi '$ of $F$,
we have (see \cite[3.2.3]{michel-2009})
\begin{equation}\label{eqn:W-GL2-bound}
  W(a(y))
  \ll
  \min(|y|^{1/2-\vartheta},|y|^{-A}) \mathcal{S}(W).
\end{equation}
Let $\varphi, \varphi' \in \pi$
and $s \in \GL_2(F)$.  Then
(see  \cite[\S2.5.1]{michel-2009})
\begin{equation}\label{eq:matrix-coeff-bound-theta-tempered-gl2}
  \langle \pi(s) \varphi, \varphi' \rangle \ll \Xi(s)^{1-2 \vartheta}
  \mathcal{S}(\varphi)  \mathcal{S}(\varphi').
\end{equation}
Since $\Xi(s) \ll \log(t) / t^{1/2}$ with
$t := 3 + \|\Ad(s)\|$,
it follows that
\begin{equation}
  \langle \pi(s) \varphi, \varphi' \rangle \ll \|\Ad(s)\|^{-1/2 + \vartheta + \eps}
  \mathcal{S}(\varphi)  \mathcal{S}(\varphi').
\end{equation}

\subsubsection{}
Let $\phi, \phi' \in \rho_{\psi}$.
It follows from the Sobolev lemma (see
\cite[\S4.9]{nelson-theta-squared})
that
$\|\phi \|_{L^\infty}, \|\phi \|_{L^1} \ll \mathcal{S}(\phi)$,
hence
for $y \in F^\times$
that
\begin{equation}\label{eqn:V-phi-bound}
  \rho_{\psi}(t(y)) \phi (1)
  =
  \chi_\psi(y) |y|^{1/2} \phi(y)
  \ll \min(|y|^{1/2}, |y|^{-A}) \mathcal{S}(\phi)
\end{equation}
and
\begin{equation}
  \langle \rho_{\psi}(t(y)) \phi, \phi ' \rangle
  \ll (|y| + |y|^{-1})^{-1/2}
  \mathcal{S}(\phi) \mathcal{S}(\phi ').
\end{equation}
Using the Cartan decomposition on
$\Mp_2(F)$, it follows that
\begin{equation}
  \langle \rho_{\psi}(s) \phi, \phi' \rangle \ll \|\Ad(s)\|^{-1/4}
  \mathcal{S}(\phi) \mathcal{S}(\phi')
  \ll
  \Xi(s)^{1/2}
  \mathcal{S}(\phi) \mathcal{S}(\phi').
\end{equation}

\subsubsection{}
Let $\sigma$ be a unitarizable $\vartheta$-tempered principal
series representation of $\Mp_2(F)$, thus
$\sigma = \mathcal{I}_{\Mp_2}(\chi)$ where $\chi$ is either
unitary or is of the form $\eta |.|^c$ with $\eta$ quadratic and
$0 < c \leq \vartheta$.
Let $f \in \sigma$.
Denote by $K$ the standard maximal compact
subgroup of $\SL_2(F)$.
Then
$\sup_K |f|, \sup_K |R_\chi f| < \infty$,
hence for $g \in \Mp_2(F)$,
\begin{equation}\label{eq:needed-for-convergence}
  \int_{k \in K} |f(k g) R_\chi f(k)| \ll_f
  \int_{k \in K} (\htt(k g)^{1/2})^{1 + c}
  \ll \Xi(g)^{1 - \vartheta}.
\end{equation}
(One can refine this estimate to feature a Sobolev--type
dependence upon $f$, but we require
\eqref{eq:needed-for-convergence} only for establishing that certain
integrals converge absolutely.)

\subsubsection{}
Let $\sigma$ be an element of the Waldspurger packet
$\Wd_{\psi}(\tau)$ of some $\vartheta$-tempered generic
irreducible unitary representation $\tau$ of $\PGL_2(F)$, so that
$\sigma$ is $\vartheta$-tempered.
Then for
$\varphi, \varphi ' \in \sigma$ and $s \in \Mp_2(F)$, we have
\begin{equation}
  \langle \sigma(s) \varphi, \varphi ' \rangle
  \ll \Xi(s)^{1 - \vartheta} \mathcal{S}(\varphi) \mathcal{S}(\varphi ')
\end{equation}
This may be proved as in the case of $\GL_2(F)$,
see \cite[\S9.1.1]{venkatesh-2005}.

\subsection{Local triple product periods on the general linear group}
\label{sec-3-12}
Let $\pi_1, \pi_2, \pi_3$ be $\vartheta$-tempered generic irreducible unitary
representations of $\GL_2(F)$ with trivial product of central characters.
For $\varphi_i \in \pi_i$ ($i=1,2,3$), set
\[
\mathcal{P}_{\PGL_2(F)}(\varphi_1,\varphi_2,\varphi_3)
:=
\int_{\PGL_2(F)}
\prod_{i=1,2,3}
\langle \pi_i(g) \varphi_i, \varphi_i \rangle.
\]
The integral converges absolutely (see \cite[Lem 2.1]{MR2449948}).
If we are in the unramified case
and the $\varphi_i$ are unramified unit
vectors,
then (see \cite[Lem 2.2]{MR2449948})
\begin{equation}\label{eqn:P-PGL2-unram-calc}
  \mathcal{P}_{\PGL_2(F)}(\varphi_1,\varphi_2,\varphi_3)
=
\frac{\zeta_F(2)^{2}
  L(\pi_1 \otimes \pi_2 \otimes \pi_3, \tfrac{1}{2})
}{\prod_{i=1,2,3} L(\ad(\pi_i), 1)}.
\end{equation}
We note that some references work with the variant of
$\mathcal{P}_{\PGL_2(F)}$ obtained by dividing through by the
RHS of \eqref{eqn:P-PGL2-unram-calc}, which is thus normalized
to take the value $1$ on unramified unit vectors in the
unramified case.  This difference in normalization has no effect
on asymptotics, since the $\vartheta$-temperedness assumption
implies that the RHS of \eqref{eqn:P-PGL2-unram-calc} has size
$\asymp 1$.
\subsection{Local triple product periods on the metaplectic group}
\label{sec-3-13}
\label{sec:triple-integrals-matrix-coefficients-mp2}
In this section (and others to follow), we consider the following trios of representations:
\begin{itemize} \item $\pi$: a $\vartheta$-tempered generic irreducible unitary
  representation of $\GL_2(F)$.
\item $\rho_{\psi}$: the Weil representation
  of $\Mp_2(F)$ on $\mathcal{S}(F)$.
\item
  $\sigma$: an element of the Waldspurger packet
  $\Wd_{\psi}(\tau)$
  of some $\vartheta$-tempered generic irreducible unitary
  representation $\tau$ of $\PGL_2(F)$.
\end{itemize}
For $\varphi_1 \in \pi$, $\varphi_2 \in \rho_{\psi}$ and $\varphi_3 \in \sigma$, set
\[
\mathcal{P}_{\SL_2(F)}(\varphi_1,\varphi_2,\varphi_3)
:=
\int_{\SL_2(F)}
\langle \pi(g) \varphi_1, \varphi_1 \rangle
\langle \rho_{\psi}(g) \varphi_2, \varphi_2 \rangle
\overline{\langle \sigma(g) \varphi_3, \varphi_3 \rangle}
\]
The integral converges absolutely (see \cite[Lem 4.3]{MR3291638}) because $\vartheta < 1/6$.
If we are in the unramified
case and the $\varphi_i$ are unramified unit
vectors,
then (see \cite[Lem 4.4]{MR3291638})
\begin{equation}\label{eqn:unram-trip-prod-metaplectic}
  \mathcal{P}_{\SL_2(F)}(\varphi_1,\varphi_2,\varphi_3)
  =
  \frac{\zeta_F(2)
    L(\ad(\pi) \otimes \tau, \tfrac{1}{2})
  }{L(\ad(\tau), 1) L(\ad(\pi), 1)}.
\end{equation}
In general, we may write $\sigma = \sigma^{\eps}$
where $\eps \in \{\pm \}$ indexes the Waldspurger packet
$\Wd_{\psi}(\tau)$ as in \S\ref{sec:local-waldspurger-packets}.
Then  (see \cite[Prop 8]{MR3291638})
\begin{equation}\label{eqn:local-nonvanishing-of-sl2-periods}
  \text{$\mathcal{P}_{\SL_2(F)}$ does not vanish on $\pi \otimes
    \rho_{\psi} \otimes \sigma$}
  \iff
  \eps = \eps(\pi \otimes \overline{\pi} \otimes \tau,\tfrac{1}{2} ).
\end{equation}
Moreover, if $\eps := \eps(\pi \otimes \overline{\pi} \otimes
\tau,\tfrac{1}{2} )$,
then $\Wd_{\psi}(\pi)$
contains $\sigma^\eps$;
equivalently, if $\eps = -1$,
then $\tau$ belongs to the discrete series
(see \S\ref{sec:local-waldspurger-packets}).

\subsection{Linearizing local triple product periods on the metaplectic group\label{sec:linearize-local-shimura}}
\label{sec-3-14}
\subsubsection{Setting and aim}
\label{sec-3-14-1}
Let $\pi, \tau$ and $\sigma \in \Wd_{\psi}(\tau)$ be as in \S\ref{sec:triple-integrals-matrix-coefficients-mp2},
but suppose now that $\tau$ and hence $\sigma$ belongs to the principal series.
Then $\Wd_{\psi}(\tau) = \{\sigma = \sigma^+\}$ is a singleton
and $\eps( \pi \otimes \overline{\pi}\otimes \tau) = +1$,
so by \eqref{eqn:local-nonvanishing-of-sl2-periods}, the local Shimura period $\mathcal{P}_{\SL_2(F)}$
does not vanish identically on $\pi \otimes \rho_{\psi} \otimes \sigma$.
The purpose of this section is to express the nonzero invariant
hermitian form $\mathcal{P}_{\SL_2(F)}$
in terms of explicit invariant trilinear forms.
In practice,
it is simpler
to analyze asymptotically
the linear forms
than the hermitian form.
\subsubsection{Local metaplectic Rankin--Selberg integral}
\label{sec-3-14-2}
We realize $\pi$ in its $\overline{\psi}$-Whittaker model,
$\rho_{\psi}$ on $\mathcal{S}(F)$ as usual,
and $\sigma$ in its induced model
$\sigma = \mathcal{I}_{\Mp_2}(\chi)$
for some character $\chi$ of $F^\times$ with $|\Re(\chi)| \leq
\vartheta$.
Set $G := \SL_2(F)$
and let $N = \{ n(x) : x \in F\} \leq G$ denote the standard
upper-triangular unipotent subgroup.
Equip $N$ with the Haar transported from $F$
and $N \backslash G$ with the quotient Haar.
Define $\ell : \pi \otimes \rho_{\psi} \otimes \sigma
\rightarrow \mathbb{C}$
by
\[
\ell(W,\phi,f)
:=
\int_{g \in N \backslash G}
W(g)
\cdot
(\rho_{\psi}(g) \phi)(1)
\cdot \overline{f(g)}
\]
This is a local integral of Rankin--Selberg type
that was studied by Gelbart--Jacquet, following Shimura.
\begin{lemma*}\label{lem:Absolute-convergence-ell-as-in-GJ}
  The integral defining $\ell$ converges absolutely.
\end{lemma*}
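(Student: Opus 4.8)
The plan is to use the Iwasawa decomposition to reduce the absolute convergence of $\ell$ to that of a single integral over $F^\times$, and then to dominate the integrand factor-by-factor using the bounds collected in \S\ref{sec:bounds-mx-coefs}. The only substantive point will be the behaviour at the cusp $y \to 0$, and it is there that the assumption $\vartheta < 1/6$ (rather than merely $\vartheta < 1/2$) is used; the rest is bookkeeping.

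First I would check that the integrand $g \mapsto W(g)\,(\rho_{\psi}(g)\phi)(1)\,\overline{f(g)}$ genuinely descends to $N \backslash G$: for $n(b) \in N$ one has $W(n(b)g) = \overline{\psi}(b) W(g)$ (since $W$ lies in the $\overline{\psi}$-Whittaker model), $(\rho_{\psi}(n(b)g)\phi)(1) = \psi(b)\,(\rho_{\psi}(g)\phi)(1)$ (from $\rho_{\psi}(n(b))\phi'(x) = \psi(bx^{2})\phi'(x)$ evaluated at $x=1$), and $f(n(b)g) = f(g)$ (since $f$ lies in an induced model), so the three characters of $b$ cancel. Next I would fix coordinates: the bottom-row map identifies $N\backslash\SL_2(F)$ with $F^{2}\setminus\{0\}$ carrying additive Lebesgue measure, and writing a general element as $n(x)t(y)k$ with $y\in F^\times$ and $k\in K$ the standard maximal compact of $\SL_2(F)$, the bottom row of $t(y)k$ is $y^{-1}$ times that of $k$, so the quotient measure becomes a constant multiple of $|y|^{-2}\,d^\times y\,dk$.

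The core step is a pointwise estimate, uniform over $k \in K$. Since $t(y) = \diag(y,y^{-1}) = (y^{-1}I)\,a(y^{2})$ with $y^{-1}I$ central and $\omega_{\pi}$ unitary, we get $|W(t(y)k)| = |(\pi(k)W)(a(y^{2}))|$, so \eqref{eqn:W-GL2-bound} applied to $\pi(k)W$ gives $|W(t(y)k)| \ll \min(|y|^{1-2\vartheta},|y|^{-A})\,\mathcal{S}(W)$; likewise \eqref{eqn:V-phi-bound} gives $|(\rho_{\psi}(t(y)k)\phi)(1)| \ll \min(|y|^{1/2},|y|^{-A})\,\mathcal{S}(\phi)$; and the transformation law of the induced model $\mathcal{I}_{\Mp_2}(\chi)$, together with $|\Re(\chi)|\le\vartheta$ and local boundedness of the smooth function $f|_{K}$, gives $|f(t(y)k)| = |y|^{1+\Re(\chi)}|f(k)| \ll \bigl(|y|^{1-\vartheta}+|y|^{1+\vartheta}\bigr)\sup_{K}|f|$. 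Multiplying these, together with the measure factor $|y|^{-2}$, and integrating out the compact variable $k$, shows that $|\ell(W,\phi,f)|$ is at most a fixed multiple of $\mathcal{S}(W)\,\mathcal{S}(\phi)\,\sup_{K}|f|$ times
\[
  \int_{F^\times} \min\bigl(|y|^{1/2-3\vartheta},\ |y|^{-A}\bigr)\,d^\times y ,
\]
which is finite precisely because $1/2 - 3\vartheta > 0$; as $\vartheta = 7/64 < 1/6$, the claim follows.

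I expect the only real content — and hence the main (if modest) obstacle — to be the arithmetic at the cusp $y\to 0$: one must balance the exponents $1-2\vartheta$, $\tfrac12$, $1-\vartheta$ contributed by the three factors against the $-2$ coming from the quotient measure, and it is exactly $\vartheta < 1/6$ that keeps the total exponent $\tfrac12 - 3\vartheta$ positive. The regime $y\to\infty$ is harmless because both $W(a(y^{2}))$ and $\rho_{\psi}(t(y))\phi(1)$ decay faster than any power of $|y|$. The remaining care is the uniformity of the estimates over $k\in K$, which follows from compactness of $K$ together with the near-invariance of the Sobolev norms (in the non-archimedean case one may instead note that the smooth vectors $W,\phi,f$ are fixed on the right by a common open compact subgroup, so only finitely many translates $\pi(k)W$, $\rho_{\psi}(k)\phi$, $f(k\cdot)$ occur).
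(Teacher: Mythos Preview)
Your proof is correct and follows essentially the same route as the paper: Iwasawa coordinates $g = t(y)k$ with quotient measure $|y|^{-2}\,d^\times y\,dk$, the pointwise bounds $|W|\ll\min(|y|^{1-2\vartheta},|y|^{-A})$, $|\rho_\psi(g)\phi(1)|\ll\min(|y|^{1/2},|y|^{-A})$, $|f|\ll|y|^{1\pm\vartheta}$, and the conclusion via $\tfrac12-3\vartheta>0$. Your write-up is slightly more careful than the paper's in that you explicitly verify the descent to $N\backslash G$ and the uniformity in $k\in K$, neither of which the paper spells out.
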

\begin{proof}
  Write $g = t(y) k$, so that $d g$ is a constant multiple
  of $|y|^{-2} \, d^\times y \, d k$.
  Write $c := \Re(\chi)$,
  so that $|c| \leq \vartheta$.
  Then for fixed $W, \phi$ and $f$,
  we have $W(g) \ll \min(|y|^{1-2 \vartheta}, |y|^{-A})$,
  $\rho_\Psi(g) \phi(1) \ll \min(|y|^{1/2}, |y|^{-A})$
  and $f(g) \ll |y|^{1+c}$.
  The integral in question is thus dominated by
  $\int_{y \in F^\times}
  \min(|y|^{(1 - 2 \vartheta) + 1/2 + (1 - \vartheta)},|y|^{-A}) \, d^\times y
  < \infty$,
  since $\vartheta < 1/6$.
\end{proof}
Thus
$\ell$ defines an invariant trilinear form.
\subsubsection{Statement of result}
\label{sec-3-14-3}
The (first part of) the following lemma
may be understood as a metaplectic analogue
of the oft-cited \cite[Lem 3.4.2]{michel-2009}.
\begin{lemma*} There exists $c > 0$ with $c \asymp 1$
  so that:
  \begin{itemize}
  \item If $\sigma$ is tempered, then
    \begin{equation}\label{eq:linearized-local-shimura-period}
      \mathcal{P}_{\SL_2(F)}(W, \phi, f)
      =
      c
      \left\lvert
        \ell(W,\phi,f)
      \right\rvert^2.
    \end{equation}
  \item If $\sigma$ is non-tempered,
    so that (without loss of generality)
    $\chi = \eta |.|^{\sigma}$
    for some quadratic character $\eta$ of $F^\times$
    and some $0 < \sigma \leq \vartheta$,
    then
    \begin{equation}\label{eq:linearized-local-shimura-period-nontempered}
      \mathcal{P}_{\SL_2(F)}(W, \phi, f)
      =
      c \,
      \ell(W,\phi,f)
      \overline{\ell(W,\phi,R_\chi f)}.
    \end{equation}
  \end{itemize}
\end{lemma*}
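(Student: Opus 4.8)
The plan is to turn $\mathcal{P}_{\SL_2(F)}$ -- a priori only a Hermitian, not bilinear, object -- into a genuinely bilinear form on which local multiplicity one can be brought to bear, to identify that form with $\ell\otimes\overline{\ell}$, and to read off the constant from the unramified computation \eqref{eqn:unram-trip-prod-metaplectic}. First I would polarize in the \emph{two copies} of the triple rather than on the diagonal: put
\[
  \mathcal{P}^{\sharp}(W,\phi,f;W',\phi',f')
  :=
  \int_{\SL_2(F)}
  \langle \pi(g) W, W' \rangle\,
  \langle \rho_{\psi}(g) \phi, \phi' \rangle\,
  \overline{\langle \sigma(g) f, f' \rangle}\, d g ,
\]
which converges absolutely just as in \S\ref{sec-3-13} (this is where $\vartheta<1/6$ is used) and restricts to $\mathcal{P}_{\SL_2(F)}$ upon setting $W'=W$, $\phi'=\phi$, $f'=f$. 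The substitution $g\mapsto g_{2}gg_{1}^{-1}$ shows that $\mathcal{P}^{\sharp}$ is invariant under \emph{independent} translation of the unprimed and primed triples; after absorbing the conjugations (so $f\mapsto\overline{f}\in\overline{\sigma}$ and $W',\phi'\mapsto\overline{W'}\in\overline{\pi}$, $\overline{\phi'}\in\overline{\rho_{\psi}}$) it becomes an element of
\[
  \Hom_{\SL_2(F)}\!\big(\pi\otimes\rho_{\psi}\otimes\overline{\sigma},\mathbb{C}\big)\otimes\Hom_{\SL_2(F)}\!\big(\overline{\pi}\otimes\overline{\rho_{\psi}}\otimes\sigma,\mathbb{C}\big),
\]
by the standard factorization of $G\times G$-invariants of an external tensor product (valid since the Hom-spaces are finite-dimensional).

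Next I would invoke the local multiplicity-one property underlying \eqref{eqn:local-nonvanishing-of-sl2-periods} (of Prasad/Qiu type; it applies, after complex conjugation, to each of the two tensor factors): each is at most one-dimensional. The Rankin--Selberg form $\ell$ of \S\ref{sec-3-14-2} lies in the first factor and is not identically zero -- concentrate $f$ on a small neighbourhood of the identity coset of $N\backslash G$ so that the integrand has no cancellation, or see the unramified computation below -- so it spans that factor, and $\overline{\ell}$, defined by $\overline{\ell}(\overline{W},\overline{\phi},f):=\overline{\ell(W,\phi,f)}$, spans the second. Hence $\mathcal{P}^{\sharp}=c\,\ell\otimes\overline{\ell}$ for a scalar $c$, nonzero by \eqref{eqn:local-nonvanishing-of-sl2-periods} (since $\tau$ is principal series, $\eps(\pi\otimes\overline{\pi}\otimes\tau)=+1$ and $\sigma=\sigma^{+}$), and restricting to the diagonal yields \eqref{eq:linearized-local-shimura-period}. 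In the non-tempered case the only change is that the invariant Hermitian structure on $\sigma=\mathcal{I}_{\Mp_2}(\chi)$ is $\|f\|^{2}=(R_{\chi}f,f)$, not the line-model pairing (\S\ref{sec:local-compl-series}); correspondingly $\langle\sigma(g)f,f'\rangle$ is represented in the line model with $R_{\chi}$ inserted, so the same argument replaces $f'$ by $R_{\chi}f'$ in the second factor and gives \eqref{eq:linearized-local-shimura-period-nontempered}. Here one uses $R_{\chi^{-1}}R_{\chi}=1$ from \eqref{eq:R_chi-isom} and the positivity $G(\chi,\psi,\xi)>0$ of Lemma \ref{lem:asymp-for-G} to confirm that $(R_{\chi}\cdot,\cdot)$ is the correct (positive) Hermitian form and that $\ell(W,\phi,R_{\chi}f)$, with $R_{\chi}f\in\mathcal{I}_{\Mp_2}(\chi^{-1})$ -- still in the convergence range of \S\ref{sec-3-14-2} -- is the intended quantity.

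It remains to control $c$. For $c\asymp 1$ it suffices to treat the unramified case, where \eqref{eqn:unram-trip-prod-metaplectic} evaluates the left side of \eqref{eq:linearized-local-shimura-period} on unramified unit vectors as a ratio of completed $L$-values, $\asymp 1$ and positive by $\vartheta$-temperedness, while $\ell$ on those vectors (resp.\ on the $R_{\chi}$-image of the unramified vector, in the non-tempered sub-case) is computed by the standard unramified metaplectic Rankin--Selberg (Casselman--Shalika / Shintani) evaluation to an explicit ratio of local $L$-factors, nonzero and $\asymp 1$; the quotient is $c\asymp 1$. In particular $c$ is real, and positive because $\mathcal{P}_{\SL_2(F)}$ is positive-semidefinite -- a standard feature of such matrix-coefficient integrals, also visible from \eqref{eqn:unram-trip-prod-metaplectic} and \cite{MR3291638} -- while $\ell\not\equiv 0$. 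I expect the main obstacle to be neither the multiplicity-one input nor the unramified calculation, but the bookkeeping of metaplectic normalizations in the non-tempered case -- the interplay of $\chi_{\psi}$ with $\chi_{\overline{\psi}}$, the $\gamma$-factor normalization of $R_{\chi}$, and the conjugation convention in $(R_{\chi}\cdot,\cdot)$ -- arranged so that $R_{\chi}$ appears in exactly the second slot of $\ell$ with no spurious constant beyond the $c\asymp 1$ already accounted for; a secondary point is checking that the multiplicity-one statement of \S\ref{sec-3-13}, phrased for $\Wd_{\psi}$, transfers to the conjugated packet attached to $\overline{\psi}$ governing $\overline{\sigma}$.
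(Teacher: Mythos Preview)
Your approach --- polarize, invoke multiplicity one, then pin down the constant via the unramified computation --- is precisely the Michel--Venkatesh strategy for \cite[Lem 3.4.2]{michel-2009}, and the paper's remark immediately following the lemma explicitly says the author does \emph{not} know how to adapt that method to the metaplectic setting; the paper instead carries out a direct unfolding computation (\S\ref{sec-3-14-4}--\S\ref{sec-3-14-6}) that yields $c=1$ for specific Haar normalizations, with the delicate work going into the quadratic Fourier inversion of Lemma~\ref{lem:quadratic-fourier-isometry-property} and the absolute-convergence Lemma~\ref{lem:annoying-abs-conv-for-local-linearization-metaplectic}. The chief gap in your argument is the control of $c$. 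Multiplicity one, even if granted, gives only $\mathcal{P}^{\sharp}=c(\pi,\tau)\,\ell\otimes\overline{\ell}$ with $c$ depending a priori on $\pi,\tau$; your claim that ``it suffices to treat the unramified case'' conflates the ``unramified case'' of \S\ref{sec:local-unramified-case} (a hypothesis on $F$, $\psi$ and the Haar measures) with the stronger hypothesis that $\pi,\tau$ are themselves unramified, which is what \eqref{eqn:unram-trip-prod-metaplectic} actually requires. The lemma must apply, e.g., in \S\ref{sec:interesting-local-estimates} where $\pi$ is ramified, and there your unramified evaluation says nothing. Showing $c$ is independent of $\pi,\tau$ is exactly where Michel--Venkatesh invoke continuity in the tempered dual and the Plancherel formula --- the step the author's remark singles out as not adapting.

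There is also a prior issue with the multiplicity-one step: the invariance you have is only under $\SL_2(F)$, but $\pi$ is a $\GL_2(F)$-representation whose restriction to $\SL_2(F)$ need not be irreducible (it can split when $\pi\otimes\eta\cong\pi$ for a nontrivial quadratic $\eta$), so $\Hom_{\SL_2(F)}(\pi\otimes\rho_\psi\otimes\overline{\sigma},\mathbb{C})$ may have dimension greater than one, and the asserted ``Prasad/Qiu type'' one-dimensionality needs a separate argument rather than a citation.
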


\begin{remark*}~
  \begin{enumerate}[(i)]
  \item
    We do not know how to adapt the method of proof of \cite[Lem
    3.4.2]{michel-2009} to the metaplectic setting, so we proceed
    more directly. The method developed here may be applied also
    to give a new (and perhaps more natural) proof 
    of \emph{loc. cit.} that avoids discussion
    of Plancherel measure and continuity in the tempered dual.
  \item The analogue of
    \eqref{eq:linearized-local-shimura-period-nontempered} is
    not considered in \cite[Lem 3.4.2]{michel-2009}:
    in that work, the induced representation is always tempered,
    since it
    arises ultimately as the local component of a unitary Eisenstein
    series.
  \end{enumerate}
\end{remark*}

More precisely, the quantity $c$ depends upon the normalizations of Haar measures.
Suppose for the remainder of \S\ref{sec:linearize-local-shimura} that
\begin{itemize}
\item the Haar $d x$ on $F$ is $\psi$-self-dual (as we have already assumed),
\item the Haar $d^\times y$ on $F^\times$ and the Haar $d x$ on $F$
  are related by $d^\times y = |y|^{-1} d y$, and
\item the Haar on $G$
  has been normalized so
  that for $\alpha \in C_c(N \backslash G)$,
  \[
    \int_{N \backslash G} \alpha  =
    \int_{y \in F^\times} \int_{x \in F} \alpha(t(y) n'(x)) |y|^{-2} \, d^\times y \, d x.
  \]
\end{itemize}
(In the non-archimedean case,
these measures assign volume $\asymp 1$ to maximal compact
subgroups.)
Recall also that we have normalized
the unitary structures on $\pi, \rho_{\psi}, \sigma$ to be given respectively
in the $\overline{\psi}$-Kirillov model, on $\mathcal{S}(F)$,
and in the induced model as in \S\ref{sec-3-3-5},
\S\ref{sec:local-compl-series}.
Under these assumptions, we shall verify that
the above identities hold with $c = 1$.
\subsubsection{Reduction to an identity}
\label{sec-3-14-4}
We now reduce the proof of
\eqref{eq:linearized-local-shimura-period}
and \eqref{eq:linearized-local-shimura-period-nontempered}
to that of a common identity.
Define $f_1, f_2 : G \rightarrow \mathbb{C}$ in the tempered case
by $f_1 := f_2 := f$
and in the non-tempered case
by $f_1 := f$ and $f_2 := R_\chi f$,
so that $\langle g f, f \rangle = (g f_1, f_2)$.
Consider the functions
$\Psi_1 \Psi_2 : \Mp_2(F) \rightarrow \mathbb{C}$ defined by
$\Psi_i(g) := \rho_{\psi}(g) \phi(1) \cdot \overline{f_i(g)}$;
they descend to functions $\Psi_i : \SL_2(F) \rightarrow
\mathbb{C}$
satisfying
$\Psi_i(n(x) g) = \psi(x) \Psi_i(g)$.
Set
\begin{equation}\label{eq:gPsi-Psi}
  \langle g \Psi_1, \Psi_2 \rangle
  :=
  \int_{h \in N \backslash G} \Psi_1(h g) \overline{\Psi_2}(h).
\end{equation}
By writing $h = t(y) n'(x)$,
we see that \eqref{eq:gPsi-Psi}
converges absolutely
and evaluates to
$\langle g \Psi_1, \Psi_2 \rangle
= \langle g \phi, \phi  \rangle \overline{\langle g f, f
  \rangle}$,
so that
$\mathcal{P}_{\SL_2(F)}(W,\phi,f)
=
\int_{g \in G}
\langle g W, W \rangle \langle g \Psi_1, \Psi_2 \rangle$
(compare with
\cite[Lem 3.2.7]{michel-2009}).
Our goals
\eqref{eq:linearized-local-shimura-period}
and \eqref{eq:linearized-local-shimura-period-nontempered}
thereby reduce to the identity of absolutely-convergent integrals
\begin{equation}\label{eq:linearized-local-shimura-period-2}
  \int_{g \in G}
  \langle g W, W \rangle
  \langle g \Psi_1, \Psi_2 \rangle
  = 
  (\int_{N \backslash G} W \Psi_1)
  \overline{(\int_{N \backslash G} W \Psi_2)}.
\end{equation}
\subsubsection{Heuristic argument}
\label{sec-3-14-5}
The LHS
of \eqref{eq:linearized-local-shimura-period-2}
formally expands to
\begin{equation}\label{eq:intermediary-divergent-step}
  \int_{g \in G}
  \int_{h \in N \backslash G}
  \Psi_1(h g) \overline{\Psi_2}(h)
  \langle g W, W \rangle
\end{equation}
and then folds after the substitution $g \mapsto h^{-1} g$ to
\begin{equation}\label{eq:intermediary-divergent-step-2}
  \int_{g \in N \backslash G}
  \int_{h \in N \backslash G}
  \Psi_1(g) \overline{\Psi_2}(h)
  \int_{x \in F}
  \psi(x)
  \langle h^{-1} n(x) g W, W \rangle,
\end{equation}
which
evaluates to the RHS
of \eqref{eq:linearized-local-shimura-period-2}
by 
the identity
\[\langle h^{-1} n(x) g W, W \rangle
  = \langle n(x) g W, h W \rangle,\]
the normalization
$d^\times y = |y|^{-1} \, d y$,
and the following Fourier inversion formula
stated in the proof of
\cite[Lem 3.4.2]{michel-2009}:
\begin{equation}\label{eq:fourier-transform-Whittaker}
  \int_{x \in F}
  \psi(x)
  \langle n(x) g W, h W \rangle
  = W(g) \overline{W(h)}
\end{equation}
(Recall here that $W$ belongs to the $\overline{\psi}$-Whittaker
model.)

Unfortunately,
the intermediary expressions \eqref{eq:intermediary-divergent-step},
\eqref{eq:intermediary-divergent-step-2}
(and the LHS of \eqref{eq:fourier-transform-Whittaker})
do not in general converge absolutely, so
further care is required
to convert the
argument sketched here
into a proof.
\subsubsection{Proof of the identity}
\label{sec-3-14-6}
We now prove \eqref{eq:linearized-local-shimura-period-2},
roughly following
the heuristic argument indicated above.
For $\Phi \in \mathcal{S}(F)$,
the quadratic Fourier transform
\begin{equation}\label{eqn:from-Phi-to-P}
  P(x) :=
  \int_{y \in F}
  \psi(-y^2 x)
  \Phi(y) \, d y
\end{equation}
defines a smooth function $P : F \rightarrow \mathbb{C}$.
\begin{lemma}\label{lem:quadratic-fourier-isometry-property}
  For $W_1, W_2$ in the $\overline{\psi}$-Whittaker
  model of $\pi$,
  one has the identity of absolutely-convergent integrals
  \begin{equation}\label{eqn:quadratic-fourier-isometry-property}
    \int_{x \in F}
    \langle n(x) W_1, W_2 \rangle P(x) \, d x
    =
    \int_{y \in F^\times}
    W_1(t(y))
    \overline{W_2}(t(y))
    \Phi(y)
    \, \frac{d^\times y}{|y|}.
  \end{equation}
\end{lemma}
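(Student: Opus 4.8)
The plan is to establish absolute convergence of both sides, then reduce by a continuity argument to the case of compactly supported Kirillov vectors, where the identity becomes a clean instance of Fourier inversion. I would carry this out as follows.

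\emph{Absolute convergence.} Realize $\pi$ in its $\overline{\psi}$-Kirillov model. Since $\|\Ad(n(x))\| \asymp 1+|x|^2$, the stated asymptotics for $\Xi$ give $\Xi(n(x)) \ll (1+|x|)^{-1+\eps}$, so the matrix coefficient bound \eqref{eq:matrix-coeff-bound-theta-tempered-gl2} yields $\langle n(x) W_1, W_2 \rangle \ll (1+|x|)^{-1+2\vartheta+\eps}\, \mathcal{S}(W_1)\, \mathcal{S}(W_2)$. On the other hand $|P| \leq \|\Phi\|_{L^1(F)}$ everywhere, while for $|x| \geq 1$ a stationary-phase (Gauss-sum) estimate of the kind behind the Weil constant formula of \S\ref{sec:weil-repn} gives $P(x) \ll_\Phi |x|^{-1/2}$. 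As $\vartheta < 1/4$, the product $\langle n(x) W_1, W_2 \rangle\, P(x)$ is integrable, with integral $\ll_\Phi \mathcal{S}(W_1)\, \mathcal{S}(W_2)$. For the right-hand side, the relation $t(y) = \omega_\pi(y)^{-1} a(y^2)$ together with $|\omega_\pi| = 1$ identifies $W_1(t(y))\,\overline{W_2(t(y))}$ with $W_1(a(y^2))\,\overline{W_2(a(y^2))}$, which by \eqref{eqn:W-GL2-bound} is $\ll \min(|y|^{2-4\vartheta},|y|^{-A})\, \mathcal{S}(W_1)\, \mathcal{S}(W_2)$; with the Schwartz decay of $\Phi$ and $\vartheta < 1/4$ (needed as $y \to 0$) this is absolutely integrable against $\Phi(y)\, d^\times y/|y|$, again with integral $\ll_\Phi \mathcal{S}(W_1)\, \mathcal{S}(W_2)$.

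\emph{Reduction to compactly supported Kirillov vectors.} The bounds just obtained present each side of \eqref{eqn:quadratic-fourier-isometry-property} as a sesquilinear form in $(W_1, W_2)$ bounded by $\mathcal{S}(W_1)\,\mathcal{S}(W_2)$ times a constant depending only on $\Phi$, hence continuous; since $C_c^\infty(F^\times)$ is dense in the Kirillov model of $\pi$, it suffices to prove the identity when $W_1, W_2 \in C_c^\infty(F^\times)$.

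\emph{The computation.} For such $W_1, W_2$, put $h(y) := W_1(a(y))\,\overline{W_2(a(y))}\,|y|^{-1}$, which lies in $C_c^\infty(F^\times) \subset \mathcal{S}(F)$. Since $\pi$ acts by right translation, $a(y) n(x) = n(yx) a(y)$, and the model is $\overline{\psi}$-equivariant, one has $(n(x) W_1)(a(y)) = \psi(-yx)\, W_1(a(y))$, so that $\langle n(x) W_1, W_2 \rangle = \int_{F} \psi(-yx)\, h(y)\, dy$ is the Fourier transform of $h$ — in particular a Schwartz--Bruhat function of $x$, hence in $L^1(F)$. Inserting the definition \eqref{eqn:from-Phi-to-P} of $P$, so that the left side becomes $\int_{F} \langle n(x) W_1, W_2 \rangle \left( \int_{v \in F} \psi(-v^2 x)\, \Phi(v)\, dv \right) dx$, and interchanging the $x$- and $v$-integrals (legitimate, since $x \mapsto \langle n(x) W_1, W_2 \rangle$ is now in $L^1(F)$ and $\Phi \in L^1(F)$), we obtain
\[
  \int_{F} \langle n(x) W_1, W_2 \rangle\, P(x)\, dx = \int_{v \in F} \Phi(v) \left( \int_{F} \psi(-v^2 x)\, \langle n(x) W_1, W_2 \rangle\, dx \right) dv .
\]
By Fourier inversion for the $\psi$-self-dual measure the inner integral is a point evaluation of $h$, equal to $W_1(a(v^2))\,\overline{W_2(a(v^2))}\,|v|^{-2}$; thus the left-hand side of \eqref{eqn:quadratic-fourier-isometry-property} equals $\int_{v \in F} \Phi(v)\, W_1(a(v^2))\,\overline{W_2(a(v^2))}\,|v|^{-2}\, dv = \int_{v \in F^\times} W_1(t(v))\,\overline{W_2(t(v))}\,\Phi(v)\, \tfrac{d^\times v}{|v|}$, using $t(v) = \omega_\pi(v)^{-1} a(v^2)$ and $|\omega_\pi| = 1$ once more. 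This is the asserted identity.

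\emph{Main obstacle.} The single delicate point is the one already flagged in \S\ref{sec-3-14-5}: the interchange of integration above is \emph{not} available before the reduction, since for general $W_1, W_2$ the matrix coefficient $\langle n(x) W_1, W_2 \rangle$ decays only like $|x|^{-1+2\vartheta}$ and $P \notin L^1(F)$, so no form of Fubini applies directly. Passing to $W_1, W_2 \in C_c^\infty(F^\times)$, where $\langle n(\cdot) W_1, W_2 \rangle$ becomes a bona fide Schwartz--Bruhat function, is exactly what makes all the manipulations rigorous; everything else is routine.
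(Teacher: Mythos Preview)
Your approach is a natural dual to the paper's --- you fix $\Phi$ and approximate $W_1, W_2$ by compactly supported Kirillov vectors, whereas the paper fixes $W_1, W_2$ and decomposes $\Phi$ dyadically away from the origin --- and your Fourier-inversion computation for $W_1, W_2 \in C_c^\infty(F^\times)$ is essentially correct. But the density step has a real gap. The continuity you establish for both sides is with respect to a Sobolev norm $\mathcal{S}_d$ for some fixed (and possibly large) $d$, so to extend from $C_c^\infty(F^\times)$ you need $\mathcal{S}_d$-density of that subspace in the smooth vectors of $\pi$, not merely the $L^2$-density that is standard. In the non-archimedean case this is genuinely delicate: $C_c^\infty(F^\times)$ is not stable under the maximal compact $K$ (the action of $w$, via the local functional equation, does not preserve compact support near $0$), while $\mathcal{S}_d$ is governed precisely by depth of $K$-invariance. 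For instance, the spherical vector in an unramified principal series lies outside $C_c^\infty(F^\times)$ and has no $K$-fixed approximant there, so any approximating sequence must have depth tending to infinity; you would then need to show that this depth grows slowly enough relative to the $L^2$-error --- a nontrivial estimate on how truncation in the Kirillov variable interacts with the lower-unipotent action --- and you have not supplied it.

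The paper's argument avoids the issue entirely. It keeps $W_1, W_2$ arbitrary, first reduces to even $\Phi$, then writes $\Phi = \sum_j \Phi_j$ with each $\Phi_j$ supported away from $0$, so that the associated $P_j$ lies in $\mathcal{S}(F)$ and the identity for the pair $(\Phi_j, P_j)$ follows from a clean Fubini/Fourier-inversion (this is where your computation applies without any density hypothesis). The passage to the limit in $j$ is then justified by the explicit pointwise estimate $P_j(x) \ll 2^{-j}(1+|2^{-2j} x|)^{-10}$, which together with $\vartheta < 1/4$ and the matrix-coefficient bound $\langle n(x) W_1, W_2 \rangle \ll (1+|x|)^{-(1-2\vartheta)}$ makes $\sum_j \int_F |P_j(x)| \cdot |\langle n(x) W_1, W_2 \rangle|\,dx$ absolutely convergent. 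That estimate on $P_j$ is the concrete work replacing your density claim; if you want to salvage your route, you would need to prove an analogous quantitative statement on the $W$-side.
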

\begin{proof}
  This is inspired by a lemma of Qiu \cite[Lem
  3.5]{2013arXiv1308.2353Q} concerning representations of
  metaplectic groups, and may be proved similarly.
  For completeness, we record a proof.
  Observe first that if $\Phi$ is an odd function, then $P = 0$,
  and so both sides of
  \eqref{eqn:quadratic-fourier-isometry-property}
  converge absolutely and vanish identically.
  We may thus assume that $\Phi$ is an even function.
  We treat first the special case in which
  $\Phi$ vanishes in a neighborhood
  of $0$.
  We may then define a Schwartz function $Q \in \mathcal{S}(F)$
  supported on the nonzero squares $y^2$ ($y \in F^\times$)
  by the formula $Q(y^2) := \Phi(y)$.
  Setting $z := y^2$,
  we have
  $d z = |2|_F |y| \, d y$,
  and the map $y \mapsto z$ is two-to-one on $F^\times$,
  hence
  \begin{equation}\label{eqn:P-vs-Q}
    P(x) =
    \frac{2}{|2|_F}
    \int_{z \in F}
    \psi(-z x) \frac{Q(z)}{|z|^{1/2}} \, d z.
  \end{equation}
  In particular, $P \in \mathcal{S}(F)$.
  Similarly, since $W_1(t(y)) \overline{W_2(t(y))} = W_1(a(y^2))
  \overline{W_2(a(y^2))}$,
  the RHS of \eqref{eqn:quadratic-fourier-isometry-property}
  may be written
  \[
  \frac{2}{|2|_F}
  \int_{z \in F^\times}
  W_1(a(z))
  \overline{W_2}(a(z))
  \frac{Q(z)}{|z|^{1/2}}
  \, d^\times z.
  \]
  By Fourier inversion on $\mathcal{S}(F)$,
  it suffices then to verify for each $P \in \mathcal{S}(F)$
  that
  \begin{equation}\label{eqn:fourier-inversion-actual-for-W1-W2}
    \int_{x \in F}
    \langle n(x) W_1, W_2 \rangle
    P(x)
    =
    \int_{z \in F^\times}
    W_1(a(z))
    \overline{W_2}(a(z))
    (\int_{x \in F} P(x) \psi(-z x) \, d x)
    \, d^\times z.
  \end{equation}
  For this we expand
  $\langle n(x) W_1, W_2 \rangle = \int_{z \in F^\times}
  \psi(-z x) W_1(a(z)) \overline{W_2}(a(z))
  \, d z$ on the LHS of \eqref{eqn:fourier-inversion-actual-for-W1-W2};
  this gives an absolutely-convergent double integral
  which rearranges to the RHS of
  \eqref{eqn:fourier-inversion-actual-for-W1-W2}.

  We turn to the general case.  We may smoothly decompose $P$ as
  a sum of a function supported away from the origin -- to which
  the previous paragraph applies -- and a function supported near
  the origin.
  By this reduction,
  we may assume that  $\Phi$ is supported on $\{x : |x| \leq 1\}$,
  say.
  We fix a smooth function $\nu \in
  C_c^\infty(\mathbb{R}^\times_+)$
  so that $\sum_{j \geq 0} \nu(2^j t) = 1$ for $0 < t \leq 1$.
  For $j \geq 0$,
  set $\Phi_j(x) := \nu(2^j |x|) \Phi(x)$.
  Then $\Phi = \sum_{j} \Phi_j$;
  moreover,
  $\int_{y \in F} \sum_j |\Phi_j(y)| \, d y < \infty$,
  so that $P = \sum_j P_j$ pointwise
  with $P_j$ attached to $\Phi_j$
  as in \eqref{eqn:from-Phi-to-P}.
  The previous paragraph shows that
  the desired identity
  \eqref{eqn:quadratic-fourier-isometry-property}
  is satisfied by each pair $(P_j,\Phi_j)$,
  so to verify the corresponding identity
  for $(P,\Phi)$
  it suffices by Fubini to check that
  \begin{equation}\label{eqn:abs-conv-involving-P-j}
    \sum_j \int_{x \in F - \{0\}}
    |\langle n(x) W_1, W_2 \rangle P_j(x)| \, d x < \infty
  \end{equation}
  and
  \begin{equation}\label{eqn:abs-conv-involving-Phi-j}
    \sum_j \int_{y \in F^\times}
    |W_1(t(y) W_2(t(y)) \Phi_j(y)| \, \frac{d^\times y}{|y|} < \infty.
  \end{equation}

  For the proofs
  of \eqref{eqn:abs-conv-involving-P-j}
  and \eqref{eqn:abs-conv-involving-Phi-j},
  we temporarily replace our general conventions
  (\S\ref{sec-3-1-9})
  on asymptotic notation with the following:
  ``fixed'' means ``depending at most upon $W_1, W_2$, and
  $\Phi$,''
  and asymptotic notation is
  then defined as in \S\ref{sec-3-1-9}.
  In particular, implied constants are independent of $j$.

  The proof of
  \eqref{eqn:abs-conv-involving-P-j}
  reduces,
  via
  the estimate $\langle n(x) W_1, W_2 \rangle \ll (1 + |x|)^{-(1 - 2
    \vartheta)}$
  and the inequality $\vartheta < 1/4$,
  to verifying that
  \begin{equation}\label{eqn:key-estimate-P-j}
    P_j(x) \ll 2^{-j} (1 + |2^{-2 j} x|)^{-10}.
  \end{equation}
  To that end,
  let $Q_j$ be attached to $\Phi_j$ as above.
  Then:
  \begin{itemize}
  \item
    $Q_j(z) \neq 0$ only if $|z| \asymp 2^{-2 j}$.
  \item
    In the non-archimedean case,
    $Q_j$ is invariant under dilation by a fixed open subgroup of
    $\mathfrak{o}^\times$.
    In the archimedean case,
    we have, for each fixed invariant differential operator
    $\mathcal{D}$ on $F^\times$,
    that $\int_{F^\times} |\mathcal{D} Q_j(y)| \, d^\times y
    \ll 1$.
  \end{itemize}
  We deduce \eqref{eqn:key-estimate-P-j}
  from \eqref{eqn:P-vs-Q}
  via these observations and elementary Fourier analysis.
  
  To establish \eqref{eqn:abs-conv-involving-Phi-j},
  we use that $W_i(t(y)) \ll \min(|y|^{1-2 \vartheta},
  |y|^{-10})$,
  that $\Phi_j(y)$ is supported on $y \asymp 2^{-j}$,
  and that $\vartheta < 1/4$.
\end{proof}
For $g,h \in G$
and $y \in F$,
define
\begin{equation*}
  I(g,h;y) :=
  \rho_{\psi}(g) \phi(y)
  \cdot 
  \overline{f_1(g)}
  \cdot 
  \overline{\rho_\psi(h) \phi(y)}
  \cdot 
  f_2(h).
\end{equation*}
Then
$I(g,h;\cdot) \in \mathcal{S}(F)$.
For $y \in F^\times$,
we verify readily
that
\[
I(g,h;y)
=
\Psi_1(t(y) g)
\overline{\Psi_2}(t(y) h)
|y|^{-3}.
\]
Set
\begin{equation}
  I(g,h) :=
  \int_{y \in F}
  I(g,h;y) \, d y
  =
  \langle g \phi, h \phi  \rangle
  \overline{f_1(g)} f_2(h).
\end{equation}
The following may be understood as a refinement of the absolute
convergence of
the matrix coefficient integral $\mathcal{P}_{\SL_2(F)}$:
\begin{lemma}\label{lem:annoying-abs-conv-for-local-linearization-metaplectic}
  $\int_{g \in G}
  \int_{k \in K}
  \left\lvert
    \langle g W, W \rangle
    I(k g, k)
  \right\rvert
  < \infty$.
\end{lemma}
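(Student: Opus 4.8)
The plan is to integrate out the compact variable $k$ against $f_1$, turning $\int_{k\in K}|f_1(kg)|\,d k$ into a factor $\Xi(g)^{1-\vartheta}$, and then to dominate what remains by a product of matrix coefficients of $\pi$ and $\rho_{\psi}$, arriving at an integrable power of $\Xi$.

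First I would note that $|I(kg,k)|$ is a well-defined non-negative function on $G\times K$, the sign ambiguities of the genuine factors $\langle\rho_{\psi}(\cdot)\phi,\cdot\rangle$, $f_1$ and $f_2$ cancelling as in \S\ref{sec-3-14-4}; and that, since $\rho_{\psi}(k)$ is unitary, $|I(kg,k)|=|\langle\rho_{\psi}(g)\phi,\phi\rangle|\,|f_1(kg)|\,|f_2(k)|$. As $f_2$ is a smooth function on the compact set (a lift of) $K$, we have $\sup_K|f_2|<\infty$, so the claim reduces to the finiteness of
\[
\int_{g\in G}|\langle g W,W\rangle|\,|\langle\rho_{\psi}(g)\phi,\phi\rangle|\Bigl(\int_{k\in K}|f_1(kg)|\,d k\Bigr)d g.
\]

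Next I would estimate the inner integral. Writing $c:=\Re(\chi)$ (which equals $0$ in the tempered case and lies in $(0,\vartheta]$ in the non-tempered case, by \S\ref{sec-3-14-4}), the transformation law of the induced model of $\sigma$ together with the Iwasawa decomposition of $kg$ yields $|f_1(kg)|\ll_f\htt(kg)^{(1+c)/2}$, whence
\[
\int_{k\in K}|f_1(kg)|\,d k\ll_f\int_{k\in K}\htt(kg)^{(1+c)/2}\,d k\ll\Xi(g)^{1-\vartheta},
\]
the last bound being exactly the one invoked in \eqref{eq:needed-for-convergence}. For the other two factors I would apply the matrix coefficient estimates of \S\ref{sec:bounds-mx-coefs}: the $\vartheta$-temperedness of $\pi$ and \eqref{eq:matrix-coeff-bound-theta-tempered-gl2} give $|\langle g W,W\rangle|\ll_W\Xi(g)^{1-2\vartheta}$, while the Weil representation bound gives $|\langle\rho_{\psi}(g)\phi,\phi\rangle|\ll_\phi\Xi(g)^{1/2}$. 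Multiplying the three estimates, the displayed integral is
\[
\ll_{W,\phi,f}\int_{g\in G}\Xi(g)^{(1-2\vartheta)+1/2+(1-\vartheta)}\,d g=\int_{g\in G}\Xi(g)^{5/2-3\vartheta}\,d g,
\]
which is finite: since $\vartheta=7/64<1/6$ we have $5/2-3\vartheta>2$, and $\int_G\Xi^{2+\eps}<\infty$.

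The only substantive point — the rest being bookkeeping with estimates already recorded — is that the three exponents of $\Xi$ must sum to strictly more than $2$; this is precisely where the inequality $\vartheta<1/6$ enters, and it is the same constraint that forces \S\ref{sec:linearize-local-shimura} to restrict to $|\Re(\chi)|\leq\vartheta$ rather than to the full complementary-series range $|\Re(\chi)|<1/2$.
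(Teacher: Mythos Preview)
Your proof is correct and follows essentially the same approach as the paper: both arguments unfold $|I(kg,k)|$ via unitarity of $\rho_\psi(k)$, bound $\sup_K|f_2|$, invoke the estimate \eqref{eq:needed-for-convergence} for the $k$-integral of $|f_1(kg)|$, and combine with the matrix-coefficient bounds $\Xi^{1-2\vartheta}$ for $\pi$ and $\Xi^{1/2}$ for $\rho_\psi$ to reach $\int_G\Xi^{5/2-3\vartheta}<\infty$ via $\vartheta<1/6$. The paper's presentation is slightly more compressed (it packages the $\phi$ and $f$ contributions together into $\Xi(g)^{3/2-\vartheta}$), but the substance is identical.
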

\begin{proof}
  We have
  $I(k g, k) = \langle g \phi, \phi \rangle \overline{f_1( k g)}
  f_2(k)$,
  so that by \eqref{eq:needed-for-convergence},
  $I(k g, k) \ll_{\phi,f} \Xi(g)^{3/2 - \vartheta}$;
  since $\langle g W, W \rangle \ll_W \Xi(g)^{1-2 \vartheta}$
  (see \eqref{eq:matrix-coeff-bound-theta-tempered-gl2})
  and $3 \vartheta < 1/2$,
  the integral in question is dominated for some $\eps > 0$
  by $\int_G \Xi^{2+\eps} < \infty$.
\end{proof}

Turning to \eqref{eq:linearized-local-shimura-period-2}, note
first that its validity is independent of the choice of Haar
measure on $G$.  Let us suppose (for convenience) that
it is given in Iwasawa coordinates $g = n(x) t(y) k$ by
$d g= |y|^{-2} d x \, d^\times y \, d k$, where $d k$ denotes
any Haar measure on the standard maximal compact subgroup $K$ of
$G$.
The LHS
of \eqref{eq:linearized-local-shimura-period-2} may then be written
\begin{equation}
  \int_{g \in G}
  \langle g W, W \rangle
  (\int_{k \in K}
  I(k g, k)  ).
\end{equation}
By Lemma
\ref{lem:annoying-abs-conv-for-local-linearization-metaplectic},
we may rearrange this to
$\int_{k \in K}
\int_{g \in G}
\langle g W, W \rangle
I(k g, k)$.
The substitution
$g \mapsto k^{-1} g$
yields
$\int_{k \in K}
\int_{g \in G}
\langle g W, k W \rangle
I(g, k)$.
By folding up the $g$-integral
and using the identity
$I(n(x) g,k)
=
\int_{y \in F}
\psi(-x y^2)
I(g,k;y) \, d y$,
we obtain
\begin{equation}
  \int_{k \in K}
  \int_{g \in N \backslash G}
  \int_{x \in F}
  \langle n(x) g W, k W \rangle
  (
  \int_{y \in F}
  \psi(-x y^2)
  I(g,k;y) \, d y
  ).
\end{equation}
By Lemma \ref{lem:quadratic-fourier-isometry-property},
this evaluates to the (absolutely convergent) integral
\begin{equation}
  \int_{k \in K}
  \int_{g \in N \backslash G}
  \int_{y \in F^\times}
  W(t(y) g) \overline{W}(t(y) k)
  I(g,k;y) \, \frac{d^\times  y}{|y|}.
\end{equation}
We swap the $g$ and $y$ integrals,
apply the substitution $g \mapsto t(y)^{-1} g$,
and expand the definition of $I(g,k;y)$
to arrive at
\begin{equation}
  \int_{g \in N \backslash G}
  W(g) \Psi_1(g)
  \int_{k \in K}
  \int_{y \in F^\times}
  \overline{W(t(y) k) \Psi_2(t(y) k) } \, \frac{d^\times  y}{|y|^2},
\end{equation}
which equals the RHS of
\eqref{eq:linearized-local-shimura-period-2}.

\subsection{Lower bounds at uninteresting places\label{sec:uninteresting-lower-bounds}}
\label{sec-3-15}

\subsubsection{Statement of result}
\label{sec-3-15-1}
We record here some unsurprising polynomial-type lower bounds
for local triple product periods on the metaplectic group.
They supply the polynomial dependence of our main result
on the ``essentially fixed'' quantities.

Here we use the specialized notation
$A \ll_{\pi,\tau} B$ or $B \gg_{\pi,\tau} A$
to signify that
$A \ll (C(\pi) C(\tau))^{O(1)} B$,
where $C(.)$ denotes the analytic conductor as in
\cite[\S3.1.8]{michel-2009},
and
$A \asymp_{\pi,\tau} B$ to denote that $A \ll_{\pi,\tau} B \ll_{\pi,\tau} A$.

\begin{conjecture*}\label{conj:polynomial-lower-bound}
  Let $\pi, \tau$ and $\sigma \in \Wd_{\psi}(\tau)$ be as in
  \S\ref{sec:triple-integrals-matrix-coefficients-mp2}.
  Assume that \eqref{eqn:local-nonvanishing-of-sl2-periods}
  holds, so that $\mathcal{P}_{\SL_2(F)}$
  is not identically zero on $\pi \otimes \rho_{\psi} \otimes \sigma$.
  Then there exist $\varphi_1 \in \pi, \varphi_2 \in \rho_\psi, \varphi_3 \in
  \sigma$
  so that
  \[
  \mathcal{P}_{\SL_2(F)}(\varphi_1,\varphi_2,\varphi_2)
  \gg_{\pi,\tau} 1,
  \]
  \[
  \mathcal{S}(\varphi_i) \ll_{\pi,\tau} 1 \text{ for } 
  i=1,2,3.
  \]
\end{conjecture*}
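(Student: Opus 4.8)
The plan is to treat separately the unramified and ramified cases. In the unramified case one takes each $\varphi_i$ to be an unramified unit vector, so that $\mathcal{S}(\varphi_i)\asymp 1$ and $\mathcal{P}_{\SL_2(F)}(\varphi_1,\varphi_2,\varphi_3)$ equals the explicit ratio of $L$-factors in \eqref{eqn:unram-trip-prod-metaplectic}; this ratio is $\asymp 1$ because each factor is a product of boundedly many terms $(1-\alpha q^{-s})^{\pm 1}$ with $|\alpha|\leq q^{\vartheta}$ and $\Re(s)\in\{\tfrac12,1\}$, and since there is no ramification $C(\pi)C(\tau)\asymp 1$, so the claim follows. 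All of the content is thus in the ramified case, where $C(\pi)C(\tau)$ may be large and only polynomial dependence is demanded.

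For the ramified case I would first reduce to a quantitative nonvanishing statement for a trilinear form. By the linearization of \S\ref{sec:linearize-local-shimura}, when $\tau$, and hence $\sigma$, lies in the principal series one has $\mathcal{P}_{\SL_2(F)}(W,\phi,f)=|\ell(W,\phi,f)|^2$ if $\sigma$ is tempered and $\mathcal{P}_{\SL_2(F)}(W,\phi,f)=\ell(W,\phi,f)\,\overline{\ell(W,\phi,R_\chi f)}$ if $\sigma$ is a complementary series, with $\ell$ the explicit metaplectic Rankin--Selberg integral studied there. It then suffices to exhibit $W\in\pi$, $\phi\in\rho_\psi$, $f\in\sigma$ with $\mathcal{S}(W)\,\mathcal{S}(\phi)\,\mathcal{S}(f)\ll_{\pi,\tau}1$ and $|\ell(W,\phi,f)|\gg_{\pi,\tau}1$; in the complementary series case one also wants the same for $\ell(W,\phi,R_\chi f)$, which I would secure by choosing $f$ so that $\mathcal{K}f$ is supported where $G(\chi,\psi,\xi)\asymp_{\pi,\tau}1$, using that $G>0$ and the asymptotic \eqref{eq:G-asymp}. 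Concretely I would take $W$ an $L^2$-normalized newvector of $\pi$, $\phi$ a fixed Schwartz function, and $f$ given in the line model of $\sigma=\mathcal{I}_{\Mp_2}(\chi)$ by a normalized multiple of the characteristic function of a suitably small neighbourhood of the identity coset. Since the central character of $\pi$ is unramified, $W$ and $f$ are invariant by a principal congruence subgroup of level $\mathfrak{p}^{r}$ with $q^{r}\ll C(\pi)C(\tau)$ (with the obvious modification at archimedean places), so by the recipe of \S\ref{sec-3-9} one has $\mathcal{S}(W),\mathcal{S}(f)\ll q^{O(r)}\ll_{\pi,\tau}1$, while $\mathcal{S}(\phi)\ll 1$.

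The remaining point is the lower bound $|\ell(W,\phi,f)|\gg_{\pi,\tau}1$ for this choice. Unfolding $\ell$ in Iwasawa coordinates turns it into a Gelbart--Jacquet/Shimura-type local zeta integral in the single torus variable $y\in F^\times$; with $W$ a newvector and $f,\phi$ as above I expect it to compute, up to elementary factors bounded polynomially in $C(\pi)C(\tau)$, a $\vartheta$-tempered local $L$-factor, hence to be $\asymp_{\pi,\tau}1$. Short of an exact evaluation, I would isolate the contribution of the range $|y|\asymp 1$, where the integrand has essentially constant phase and each of the three factors is $\gg_{\pi,\tau}1$, and control the complementary range using the decay estimates \eqref{eqn:W-GL2-bound} and \eqref{eqn:V-phi-bound} for $\vartheta$-tempered Whittaker and Weil vectors, the inequality $\vartheta<1/6$ leaving room to spare.

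The hard part, I expect, is the case in which $\tau$---hence $\sigma$---belongs to the discrete series: although an analogous factorization $\mathcal{P}_{\SL_2(F)}=c\,|\ell|^2$ with $c\asymp 1$ should hold by multiplicity one, no explicit formula for $\ell$ is then available, so neither the unfolding nor the explicit test vectors above can be used, and one would instead need a different construction of nearly matched data---for instance transporting it from the $\GL_2(F)$ side through the theta and Jacquet--Langlands correspondences, in the spirit of the global constructions elsewhere in the paper. This is why only a conjecture is recorded in general; in the principal series case, which is all that is needed under hypothesis (PS), the argument sketched above applies.
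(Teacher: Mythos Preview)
Your reduction to the unramified case matches the paper exactly, and your overall framework for the ramified principal series case---pass through the linearization \S\ref{sec:linearize-local-shimura} and produce test vectors with controlled Sobolev norm for which $|\ell|\gg_{\pi,\tau}1$---is the right one. The gap is in your choice of vectors and the lower bound.

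You take $W$ a newvector and $\phi$ a \emph{fixed} Schwartz function. But the central character $\omega_\pi$ of $\pi$ is not assumed unramified in this local setting, and $\chi$ (the inducing character of $\sigma$) may be ramified as well. The torus integral you isolate is $\mathcal{L}_\chi(W,\phi)=\int_{y\in F^\times} W(t(y))\phi(y)\overline{\chi}(y)\,|y|^{-1/2}\,d^\times y$, and since $W(t(y))=\omega_\pi(y)^{-1}W(a(y^2))$, the integrand carries the oscillation $\omega_\pi(y)^{-1}\overline{\chi}(y)$. With $\phi$ fixed and $\omega_\pi,\chi$ ramified, there is no ``range $|y|\asymp 1$ where the integrand has essentially constant phase''; the integral can be as small as any inverse power of $C(\pi)C(\tau)$, and your hoped-for exact evaluation as a tempered local $L$-factor is only available in the unramified situation you have already disposed of.

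The paper's fix is to choose $W$ and $\phi$ \emph{together} so as to kill the oscillation: take $W(a(y))=\alpha(y)$ for a fixed nonnegative $\alpha\in C_c^\infty(F^\times)$ (so $W$ is \emph{not} a newvector), and set $\phi(y):=\overline{\chi}^{\,-1}(y)\omega_\pi(y)\alpha(y^2)$. Then $W(t(y))\phi(y)\overline{\chi}(y)=\alpha(y^2)^2\geq 0$, whence $\mathcal{L}_\chi(W,\phi)\asymp 1$ immediately. One then localizes $f$ in the line model near the identity with a parameter $X$, so that $\ell(W,\phi,f)\approx |X|^{-1/2}\mathcal{L}_\chi(W,\phi)$ plus an error $O_{\pi,\tau}(|X|^{-1})$, and takes $|X|$ a fixed power of $C(\pi)C(\tau)$. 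The price is that the Sobolev bounds $\mathcal{S}(W),\mathcal{S}(\phi)\ll_{\pi,\tau}1$ are no longer automatic; the paper obtains them via the Jacquet--Langlands and Tate local functional equations (to control $wW$ and $\phi^\wedge$), which is a step your sketch does not anticipate.

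For the complementary series, your idea of constraining the support of $\mathcal{K}f$ does not by itself force $\ell(W,\phi,f)\overline{\ell(W,\phi,R_\chi f)}$ to be bounded below by a positive quantity. The paper instead chooses two vectors $f_1,f_2$ for which $\mathcal{H}(f_1,f_2):=\mathcal{P}_{\SL_2(F)}(W,\phi;f_1,f_2)$ is computed via \eqref{eq:linearized-local-shimura-period-nontempered} as a product of two tempered-type $\ell$-values and shown to be a positive real $\gg_{\pi,\tau}1$; polarization then yields a single $f\in\{f_1,f_2,f_1+f_2\}$ with $\mathcal{H}(f)\gg_{\pi,\tau}1$. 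Verifying $\|f_i\|\asymp_{\pi,\tau}1$ uses the asymptotic \eqref{eq:G-asymp} for $G(\chi,\psi,\xi)$.
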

We are content here to address
the case that $\sigma$ belongs to the principal series,
in which case the condition
\eqref{eqn:local-nonvanishing-of-sl2-periods}
is automatic (see \S\ref{sec:linearize-local-shimura}).
\begin{lemma*}
  The conclusion of the conjecture
  holds under the additional
  assumption that $\sigma$ belongs to the principal series.
\end{lemma*}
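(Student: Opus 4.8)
\emph{Reduction via linearization.}
Because $\sigma$ lies in the principal series, we are in the situation of \S\ref{sec:linearize-local-shimura}: the linearization lemma of \S\ref{sec-3-14-3} writes $\mathcal{P}_{\SL_2(F)}(W,\phi,f)$ as $c\,|\ell(W,\phi,f)|^2$ when $\sigma$ is tempered (see \eqref{eq:linearized-local-shimura-period}) and as $c\,\ell(W,\phi,f)\,\overline{\ell(W,\phi,R_\chi f)}$ when $\sigma$ is a complementary series (see \eqref{eq:linearized-local-shimura-period-nontempered}), where $\ell$ is the metaplectic Rankin--Selberg integral of \S\ref{sec-3-14-2} and $c\asymp 1$. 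Since the unitary structures on $\pi,\rho_\psi,\sigma$ are those for which these identities hold and the Sobolev norms are as in \S\ref{sec-3-9}, it suffices to produce $W\in\pi$, $\phi\in\rho_\psi$, $f\in\sigma$ with $\mathcal{S}(W),\mathcal{S}(\phi),\mathcal{S}(f)\ll_{\pi,\tau}1$ and $|\ell(W,\phi,f)|\gg_{\pi,\tau}1$ in the tempered case, and the analogous statement with $\ell(W,\phi,f)\,\overline{\ell(W,\phi,R_\chi f)}$ in the complementary-series case. For the latter I will use Lemma \ref{lem:asymp-for-G}: on the Kirillov-type model $\mathcal{K}$ of \S\ref{sec:metapl-intertw-oper} the normalized intertwiner $R_\chi$ acts by the strictly positive multiplier $G(\chi,\psi,\xi)\asymp1$ (and $R_{\chi^{-1}}\circ R_\chi=1$ by \eqref{eq:R_chi-isom}), so $R_\chi$ preserves ``positively supported'' vectors up to a bounded positive distortion, and it will be enough to produce one $f$ for which both $\ell(W,\phi,f)$ and $\ell(W,\phi,R_\chi f)$ are positive and $\gg_{\pi,\tau}1$.

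\emph{Unramified places.}
When $F$ is in the unramified case and $\pi$ (hence $\tau$ and $\sigma$) is unramified, take $\varphi_1,\varphi_2,\varphi_3$ to be unramified unit vectors, so that $\mathcal{S}(\varphi_i)=\|\varphi_i\|=1$. By \eqref{eqn:unram-trip-prod-metaplectic} together with the $\vartheta$-temperedness of $\pi$ and $\tau$, which forces every $L$-factor occurring there to be of size $\asymp1$, one gets $\mathcal{P}_{\SL_2(F)}(\varphi_1,\varphi_2,\varphi_3)\asymp1$. This disposes of all but finitely many places, whose conductors are in any case polynomially bounded.

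\emph{Ramified places.}
At a ramified place I build explicit vectors for which the integrand of $\ell$ is manifestly non-oscillating. Realize $\pi$ in its $\overline{\psi}$-Kirillov model and $\sigma=\mathcal{I}_{\Mp_2}(\chi)$ in its induced model, and use Iwasawa coordinates $g=t(y)k$ on $N\backslash G$, for which $\rho_\psi(t(y)k)\phi(1)=\chi_\psi(y)|y|^{1/2}\,(\rho_\psi(k)\phi)(y)$ and $f(t(y)k)=|y|\,\chi_\psi(y)\,\chi(y)\,f(k)$. Choose $\phi$ a non-negative even Schwartz function supported on a small enough neighbourhood of a unit (and of its negative) on which $\chi_\psi$ and $\chi$ are constant; choose $f$ the indicator in the line model of a small enough neighbourhood of $0$, so that its support meets $t(y)K$ only through $k$ in a compact open subgroup $K'$ of level $\ll_{\pi,\tau}1$ and the $k$-integral collapses; and choose $W$ a newvector for $\pi$ --- explicitly $W(a(y))=1_{\mathfrak{o}}(y)|y|$ when $\pi_{\mathfrak q}$ is a twist of the special representation (\S\ref{sec-3-8-4}), and in general a translate of the newvector placing its support in $|y|\asymp1$, where after correcting by a locally constant unitary factor $W$ is positive. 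For such vectors, pulling out a unimodular constant built from the locally constant characters, $\ell(W,\phi,f)$ reduces to a strictly positive one-dimensional integral over the compact support of $\phi$, which is $\gg_{\pi,\tau}1$; and all three vectors are invariant under a compact open subgroup of level $\ll_{\pi,\tau}1$, so by the property of $\mathcal{S}$ recalled in \S\ref{sec-3-9} their Sobolev norms are $\ll_{\pi,\tau}1$ after $L^2$-normalization. In the complementary-series case $R_\chi f$, computed via the positive multiplier $G(\chi,\psi,\xi)$ of Lemma \ref{lem:asymp-for-G}, is again a positively supported bump near $0$ of comparable size, so $\ell(W,\phi,R_\chi f)$ is a positive multiple $\asymp_{\pi,\tau}1$ of the same integral and the product is $\gg_{\pi,\tau}1$. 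The archimedean ramified case is handled in the same way, with smooth bumps in place of indicators and ``bounded $K$-type'' in place of ``level $\ll_{\pi,\tau}1$''.

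\emph{Main obstacle.}
The one genuinely delicate point is controlling the \emph{phase} of the Rankin--Selberg integrand uniformly over the finitely many ramified places with their polynomially growing conductors: the integrand carries the Weil index $\chi_\psi$, the inducing character $\chi$, and the oscillation of the Kirillov function of a possibly deeply ramified $\pi$, and a priori these could conspire so that $\ell$ is small or vanishes. The localization above is designed to neutralize all three simultaneously --- shrinking $\phi$ (and, if necessary, $W$ and $f$) to a set small enough, of size $\gg_{\pi,\tau}1$ but possibly shrinking polynomially, on which every character in sight is constant --- at the permitted cost of requiring invariance only under a compact open subgroup of level $\ll_{\pi,\tau}1$. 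Verifying that these interlocking localizations can be arranged at once and that the resulting positive integral admits a polynomial lower bound is the technical heart of the argument.
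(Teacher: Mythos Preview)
Your overall strategy---linearize via \S\ref{sec-3-14-3}, dispose of unramified places by \eqref{eqn:unram-trip-prod-metaplectic}, and at the remaining places produce explicit vectors making $\ell$ large---is the same as the paper's. The execution differs in two respects, one harmless and one serious.

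\emph{The tempered case.} Where you localize $\phi$ to a small set on which all characters are constant, the paper instead builds the phase cancellation directly into $\phi$: with $W(a(y)):=\alpha(y)$ chosen freely in the Kirillov model (not the newvector), it takes $\phi(y):=\overline{\chi}^{-1}(y)\,\omega_\pi(y)\,\alpha(y^2)$, so that $W(t(y))\phi(y)\overline{\chi}(y)=\alpha(y^2)^2$ is manifestly positive, and then $f(n'(x)):=|X|^{1/2}\alpha(Xx)$ with $|X|$ a fixed power of $C(\pi)C(\tau)$. Your localization is a legitimate variant; the paper's device is cleaner because it avoids shrinking the support and so simplifies the Sobolev bookkeeping (the paper controls $\mathcal{S}(W)$ via the Jacquet--Langlands local functional equation and $\mathcal{S}(\phi)$ via the Tate equation, rather than by level considerations). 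Your invocation of the newvector is unnecessary but not wrong: $W^{\mathrm{new}}(a(y))$ is indeed constant on $\mathfrak{o}^\times$, so the only phase in $W^{\mathrm{new}}(t(y))$ comes from $\omega_\pi^{-1}$.

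\emph{The complementary-series case: a genuine gap.} Your claim that ``$R_\chi f$, computed via the positive multiplier $G(\chi,\psi,\xi)$, is again a positively supported bump near $0$ of comparable size'' is incorrect. The multiplier $G(\chi,\psi,\xi)$ acts on the Kirillov-type transform $\mathcal{K}f$, not on the line-model values $f(n'(x))$; passing between the two involves a Fourier transform, so multiplication by $G$ in one model is convolution in the other. Concretely, the intertwining operator $M_\chi$ sends a narrow bump in the line model to something spread out (roughly constant near $0$), so you cannot localize $\ell(W,\phi,R_\chi f)$ to $x$ near $0$ and invoke the continuity lemma of \S\ref{sec:some-estimates}. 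Relatedly, the unitary norm on $\sigma$ in the complementary series is $\|f\|^2=(R_\chi f,f)$, not the $L^2$-norm in the line model, and you do not address this.

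The paper circumvents both issues by a polarization trick: it sets $\mathcal{H}(f_1,f_2):=\mathcal{P}_{\SL_2(F)}(W,\phi;f_1,f_2)$ and observes that it suffices to produce $f_1,f_2$ with $\mathcal{S}(f_i)\ll_{\pi,\tau}1$ and $\Re\mathcal{H}(f_1,f_2)\gg_{\pi,\tau}1$, since then one of $f_1,f_2,f_1+f_2$ works. It takes $f_1:=|X|^{c/2}f_0$ and $f_2:=|X|^{-c/2}R_\chi^{-1}(f_0^*)$, where $f_0\in\sigma$ and $f_0^*\in\sigma^*=\mathcal{I}_{\Mp_2}(\chi^{-1})$ are the same bumps as in the tempered case; then by \eqref{eq:linearized-local-shimura-period-nontempered} one has $\mathcal{H}(f_1,f_2)=\ell(W,\phi,f_0)\,\overline{\ell(W,\phi,f_0^*)}$, which the tempered computation shows is positive and $\gg_{\pi,\tau}1$. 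The norms $\|f_1\|,\|f_2\|$ are then computed directly via the Kirillov-type formula and the asymptotic \eqref{eq:G-asymp} for $G$, and the isometry \eqref{eq:R_chi-isom}. This bypasses any need to understand $R_\chi f$ in the line model.
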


\begin{remark*} The general case of the conjecture is likely
  accessible by brute-force analysis of matrix coefficients as
  in \cite{PDN-HQUE-LEVEL, 2014arXiv1409.8173H}.  It also seems
  likely to follow by effectizing the proof of
  \eqref{eqn:local-nonvanishing-of-sl2-periods}.  It would be
  desirable to have a soft yet direct approach.  The question of
  to what extent the conjecture can be made uniform is also
  interesting; we address it partially in
  \S\ref{sec:comments-other-aspects}.
\end{remark*}

We observe first that if we are in the unramified case
and $\pi$ and $\tau$ are unramified,
then the conclusion of the lemma follows from
\eqref{eqn:unram-trip-prod-metaplectic}
upon taking $\varphi_1,\varphi_2,\varphi_3$ to be unramified
unit vectors.
We may thus assume either that
we are not in the unramified case,
or that $F$ is non-archimedean and at least one of $\pi, \tau$
is ramified.
In particular, if $F$ is non-archimedean,
we may assume that
\begin{equation}\label{eqn:q-ess-bdd}
  q \ll_{\pi,\tau} 1.
\end{equation}
We may and shall assume that the Haar on $N \backslash G$
is as in \S\ref{sec-3-14-3}.

\subsubsection{Choice of models}
\label{sec-3-15-2}
\begin{itemize}
\item We realize $\pi$ in its $\overline{\psi}$-Whittaker model.
\item We realize $\rho_{\psi}$ on $\mathcal{S}(F)$, as usual.
\item We write $\sigma = \mathcal{I}_{\Mp_2}(\chi)$
  for some character $\chi$ of $F^\times$
  and realize $\sigma$ in its induced model.
  Our assumptions imply that
  $c := \Re(\chi)$ satisfies $|c| \leq \vartheta$.
\end{itemize}
We accordingly write $W, \phi, f$
instead of $\varphi_1,\varphi_2,\varphi_3$.

\subsubsection{Some estimates}\label{sec:some-estimates}
The proof of the lemma of \S\ref{sec-3-14-2}
shows that the integral
\[
\mathcal{L}_{\chi}(W,\phi)
:=
\int_{y \in F^\times}
W(t(y))
\phi(y) \overline{\chi }(y) \, \frac{d^\times y}{|y|^{1/2}}
\]
converges absolutely
for $W \in \pi, \phi \in \rho_{\psi}$
and defines
a functional $\mathcal{L}_{\chi} : \pi \otimes \rho_\psi \rightarrow
\mathbb{C}$
so that for $f \in \sigma$,
we have (cf. \S\ref{sec-3-14-2})
\[
\ell(W,\phi,f)
= \int_{x \in F}
\overline{f(n'(x))}
\mathcal{L}_{\chi}(n(x) W, n(x) \phi) \, d x.
\]
We require some very crude estimates
for $\mathcal{L}_{\chi}$:
\begin{lemma*}~
  \begin{enumerate}[(i)]
  \item  $\mathcal{L}_{\chi}(W,\phi) \ll \mathcal{S}(W)
    \mathcal{S}(\phi)$.
  \item
    $\mathcal{L}_{\chi}(n(x) W, n(x) \phi) = \mathcal{L}_{\chi}(W,\phi)
    + \O(|x| \mathcal{S}(W)
    \mathcal{S}(\phi))$
    for $x \in F$ with $|x| \ll 1$.
  \end{enumerate}
\end{lemma*}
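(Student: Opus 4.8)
The plan is to get (i) by rerunning, with Sobolev-norm bookkeeping, the absolute-convergence estimate already carried out in the proof of the lemma of \S\ref{sec-3-14-2}, and then to deduce (ii) from (i) by a soft continuity argument. For (i) I would substitute into the integrand of $\mathcal{L}_\chi$ the bounds $W(t(y)) \ll \min(|y|^{1-2\vartheta},|y|^{-A})\,\mathcal{S}(W)$ — which follows from \eqref{eqn:W-GL2-bound} after replacing $t(y)$ by $a(y^2)$ at the cost of the unimodular central-character factor $\omega_\pi(y^{-1})$ — together with $\phi(y) \ll \min(1,|y|^{-A})\,\mathcal{S}(\phi)$, a restatement of \eqref{eqn:V-phi-bound}, and $|\overline{\chi}(y)| = |y|^{\Re(\chi)}$ with $|\Re(\chi)| \leq \vartheta$. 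The integrand is then $\ll \min\big(|y|^{1/2-3\vartheta},\,|y|^{-A}\big)\,\mathcal{S}(W)\mathcal{S}(\phi)$, and since $\vartheta < 1/6$ the exponent $1/2-3\vartheta$ at the origin is positive, so the integral against $d^\times y$ converges to something $\ll \mathcal{S}(W)\mathcal{S}(\phi)$. Here the implied constant depends only on $\vartheta$ and the fixed normalization data, and in particular is uniform over all $\chi$ with $|\Re(\chi)| \leq \vartheta$, since only the modulus $|\overline{\chi}(y)|$ enters.

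For (ii): by (i), $\mathcal{L}_\chi$ is a bounded bilinear form. For $|x| \leq 1$ the element $n(x)$ lies in a fixed compact subset of the group (the standard maximal compact, in the non-archimedean case), so $\mathcal{S}(n(x)W) \ll \mathcal{S}(W)$ and $\mathcal{S}(n(x)\phi) \ll \mathcal{S}(\phi)$ after enlarging implied indices, and the action is Lipschitz at the origin in the Sobolev sense, $\mathcal{S}(n(x)W - W) \ll |x|\,\mathcal{S}(W)$ and likewise for $\phi$ (in the archimedean case by integrating the infinitesimal action; in the non-archimedean case the left side vanishes once $|x|$ is below the smoothness scale of the vector). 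Then I would write
\[
\mathcal{L}_\chi(n(x)W,\,n(x)\phi) - \mathcal{L}_\chi(W,\phi) \;=\; \mathcal{L}_\chi\big(n(x)W - W,\,n(x)\phi\big) + \mathcal{L}_\chi\big(W,\,n(x)\phi - \phi\big)
\]
and bound each summand by (i) together with the preceding estimates, arriving at $\ll |x|\,\mathcal{S}(W)\mathcal{S}(\phi)$; this is (ii).

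The hard part is not conceptual but is the uniformity bookkeeping, which sits in two places. In (i) one must keep track that the implied constant is genuinely independent of $\chi$, which is clear since only $|\overline{\chi}(y)| = |y|^{\Re(\chi)}$ appears and $\Re(\chi)$ ranges over the compact interval $[-\vartheta,\vartheta]$. In (ii) the delicate point is the non-archimedean case, where the group action is only locally constant rather than literally Lipschitz: the bound $\mathcal{S}(n(x)v - v) \ll |x|\,\mathcal{S}(v)$ must be read as ``the left side is zero once $|x|$ drops below the smoothness scale of $v$, while above that scale only boundedly many values of $|x|\asymp 1$ remain, for which one invokes the defining property of the Sobolev norms (\S\ref{sec-3-9}) and the trivial bound.'' This is legitimate here because \eqref{eqn:q-ess-bdd} has already reduced us to $q \ll_{\pi,\tau} 1$, so the implied constants are allowed to depend on the essentially fixed data; all the genuine content is the convergence estimate underlying (i).
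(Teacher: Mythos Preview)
Your proof is correct and follows essentially the same route as the paper: part (i) via the pointwise bounds \eqref{eqn:W-GL2-bound} and \eqref{eqn:V-phi-bound}, part (ii) via a bilinear expansion of the difference together with the Lipschitz-type Sobolev estimate $\mathcal{S}_d(n(x)v - v) \ll |x|\,\mathcal{S}_{d+1}(v)$. The paper's expansion uses three terms ($\mathcal{L}_\chi(W',\phi) + \mathcal{L}_\chi(W,\phi') + \mathcal{L}_\chi(W',\phi')$) where yours uses two ($\mathcal{L}_\chi(W',n(x)\phi) + \mathcal{L}_\chi(W,\phi')$), but these are equivalent.

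One remark: your invocation of \eqref{eqn:q-ess-bdd} to justify the non-archimedean Lipschitz bound is unnecessary. The estimate $\mathcal{S}_d(n(x)v - v) \ll |x|\,\mathcal{S}_{d+1}(v)$ holds uniformly in $q$ once the index is raised by one, directly from the definition of the Sobolev norms: if $v$ is $K[r]$-invariant and $|x| \leq q^{-r}$ then the left side vanishes, while for $q^{-r} < |x| \leq 1$ one has $|x| \geq q^{-r+1}$, so $|x|\,\mathcal{S}_{d+1}(v) \gg q^{-r+1} \cdot q^{(d+1)r}\|v\| \geq q^{dr}\|v\| \gg \mathcal{S}_d(n(x)v - v)$. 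The paper simply asserts this as ``follows readily from their definition.''
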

\begin{proof}
  (i) follows from \eqref{eqn:W-GL2-bound} and
  \eqref{eqn:V-phi-bound}.  For (ii), we write
  $n(x) W = W + W'$, $n(x) \phi = \phi + \phi '$ and
  $\mathcal{L}_{\chi}(n(x) W,n(x) \phi) = \mathcal{L}_{\chi}(W,\phi) +
  \mathcal{L}_{\chi}(W',\phi) + \mathcal{L}_{\chi}(W,\phi') +
  \mathcal{L}_{\chi}(W',\phi')$.
  By (i), we then reduce to the following property of the
  Sobolev norms, which follows readily from their definition:
  \[\mathcal{S}_d(n(x) v - v) \ll |x| \mathcal{S}_{d+1}(v)
  \text{ for
    $|x| \ll 1$ and $d$ fixed.}
  \]
\end{proof}

\subsubsection{Choice of $W, \phi$}
We fix a nonnegative function $\alpha \in
C_c^\infty(F^\times)$,
not identically zero on $F^{\times 2}$;
in the unramified case, we assume that $\alpha =
1_{\mathfrak{o}^\times}$.
We choose $W$ to be given in the Kirillov model by
$W(a(y)) = \alpha(y)$.
Let $\omega_\pi$ denote the central character of $\pi$.
Then $W(t(y)) = \omega_\pi(y)^{-1} \alpha(y^2)$.
We choose $\phi(y) := \overline{\chi }^{-1}(y) \omega_\pi(y)
\alpha(y^2)$.
Then
$W(t(y)) \phi(y) = \alpha(y^2)^2$,
so
\[
\mathcal{L}_{\chi}(W,\phi) = \int_{y \in F^\times}
\alpha(y^2)^2 \, \frac{d^\times y}{|y|^{1/2}}
\asymp 1.
\]
We have $\|W\|, \|\phi\| \asymp 1$
and the crude estimates
\begin{equation}\label{eqn:estimate-W-phi-choice}
  \mathcal{S}(W), \mathcal{S}(\phi) \ll_{\pi,\tau}
  1,
\end{equation}
which may be verified as follows:
\begin{itemize}
\item In the non-archimedean case, the construction of $W$ shows
  that it is invariant by $n(x)$ and $a(y)$ for all
  $x \in \mathfrak{p}^{O(1)}$ and
  $y \in \mathfrak{o}^\times \cap (1 + \mathfrak{p}^{O(1)})$.
  To deduce the required estimate for
  $\mathcal{S}(W)$, it suffices to verify that $W$ is invariant
  by $n'(x)$ for all $x \in \mathfrak{p}^{c(\pi) + \O(1)}$, or
  equivalently, that $w W$ is invariant by $n(x)$ for all such
  $x$, i.e., that $w W(a(y))$ is supported on
  $y \in \mathfrak{p}^{-(c(\pi) + \O(1))}$;
  for this we decompose $\alpha$ as a linear combination
  of the functions
  $\nu \cdot 1_{\varpi^{-n}
    \mathfrak{o}^\times}$,
  where $\nu$ is a unitary character of $F^\times$ and $n$ is
  integer,
  and appeal to the Jacquet--Langlands local functional equation
  (see, e.g., \cite[\S3.2.2, \S3.2.3]{michel-2009}).
  The corresponding estimate for $\mathcal{S}(\phi)$ is deduced
  similarly using the Tate local functional equation.
\item In the archimedean case, the estimate for $\mathcal{S}(W)$
  follows from \cite[\S3.2.5]{michel-2009}.  The estimate for
  $\mathcal{S}(\phi)$ follows from the fact that the Lie algebra
  of $\Mp_2(F)$ acts on $\rho_{\psi}$ via differential operators.
\end{itemize}

\subsubsection{The tempered case}
Suppose first that $\sigma$ is tempered,
so that $\chi$ is unitary.
We choose $f \in \sigma$ to be given in the line model by
$f(n'(x)) := |X|^{1/2} \alpha(X x)$ for some parameter
$X \in F^\times$ with $|X| \geq 1$, to be chosen later.  Then
$\|f\| \asymp 1$.
By the lemma of \S\ref{sec:some-estimates}
and the estimate \eqref{eqn:estimate-W-phi-choice},
we see that
\[
\ell(W,\phi,f)
-
|X|^{-1/2} \mathcal{L}_{\chi}(W,\phi) \int_{x \in F} \alpha(x) \, d x
\ll_{\pi,\tau} |X|^{-1}.
\]
By choosing $X$ so that $|X|$ is a sufficiently large but fixed power of
$C(\pi) C(\tau)$, we obtain $\ell(W,\phi,f) \gg_{\pi,\tau} 1$.
Moreover, $\mathcal{S}(f) \ll_{\pi,\tau} 1$;
we may see this in
the non-archimedean case by considering the invariance of $f$
and in the archimedean case by noting that the Lie algebra of
$\Mp_2(F)$ acts on the line model of $\sigma$ via differential
operators with coefficients bounded polynomially in $C(\chi)$.
We conclude via \eqref{eq:linearized-local-shimura-period}.

\subsubsection{The non-tempered
  case}\label{sec:non-tempered-case-for-lower-bounds}
Suppose now that $\sigma$ is non-tempered,
thus $\chi = \eta |.|^c$ with $\eta$ quadratic, $c$ real,
and $0 \neq |c| \leq \vartheta$.
The map
\[
  \mathcal{H} : \sigma \otimes \overline{\sigma }
  \rightarrow \mathbb{C} 
\]
\[
  \mathcal{H}(f_1,f_2) :=
  \int_{g \in G} \langle g W, W \rangle
  \langle g \phi, \phi  \rangle
  \langle g f_1, f_2 \rangle
\]
defines a hermitian form on $\sigma$,
i.e., $\mathcal{H}(f_1, f_2) = \overline{\mathcal{H}(f_2,f_1)}$.  Set
$\mathcal{H}(f) := \mathcal{H}(f,f)$.  Our task is to find
$f \in \sigma$ for which
$\mathcal{S}(f) \ll_{\pi,\tau} 1 \ll_{\pi,\tau} \mathcal{H}(f)$.
It will suffice to find
$f_1,f_2 \in \sigma$
for which
\[\mathcal{S}(f_1), \mathcal{S}(f_2) \ll_{\pi,\tau} 1
\ll_{\pi,\tau} \Re(\mathcal{H}(f_1,f_2)),\]
because then the polarization identity
$\mathcal{H}(f_1 + f_2)
= \mathcal{H}(f_1) + \mathcal{H}(f_2)  + 2
\Re(\mathcal{H}(f_1,f_2))$
gives the required conclusion
for some
$f \in \{f_1, f_2, f_1 + f_2\}$.
To that end,
let $f_0 \in \sigma$ and $f_0^* \in \sigma^* =
\mathcal{I}_{\Mp_2}(\chi^{-1})$
be as in the tempered case,
given in the line model by
$x \mapsto |X|^{1/2} \alpha(X x)$
where $X \in F^\times$
is chosen so that $|X|$ is a sufficiently large but fixed  power
of $C(\pi) C(\tau)$.
Take \[
f_1 := |X|^{c/2} f_0,
\quad f_2 := |X|^{-c/2} R_\chi^{-1}(f_0^*).
\]
By \eqref{eq:linearized-local-shimura-period-nontempered},
we then have
\[\mathcal{H}(f_1,f_2)
= \ell(W,\phi,f_0) \overline{\ell(W,\phi,f_0^*)}.\]
Arguing as in the tempered case,
we see that
$\mathcal{H}(f_1,f_2)
- |X|^{-1/2} \kappa  \ll_{\pi,\tau} |X|^{-1}$
for a positive real
$\kappa$ with $\kappa \asymp 1$;
explicitly,
\[\kappa = 
\mathcal{L}_{\eta |.|^c}(W, \phi)
\overline{\mathcal{L}_{\eta |.|^{-c}}(W, \phi)}
= \int_{y \in F^\times} \alpha(y^2)^2 \, \frac{d^\times
  y}{|y|^{1/2}}
\int_{y \in F^\times} \alpha(y^2)^2 \, \frac{d^\times
  y}{|y|^{1/2-2 c}}.\]
Thus $\Re(\mathcal{H}(f_1,f_2)) 
\gg_{\pi,\tau} 1$ for $X$ as indicated.
We have
\[
\|f_1\|^2
= \|w f_1\|^2
= \int_{\xi \in F}
G(\chi,\psi,\xi)
|
\int_{x \in F}
|X|^{1/2+c} \alpha(X x)
\psi(\xi x)
\, d x |^2.
\]
Write $\alpha '(\xi) := \int_{x \in F} \alpha(x) \psi(\xi x) \, d x$.
Using \eqref{eq:G-asymp}
and
\eqref{eqn:q-ess-bdd},
we see that
\[
\|f_1\|^2
\asymp_{\pi,\tau}
\int_{\xi}
|\xi|^{-c}
\left\lvert |X|^{-1/2+c/2}
  \alpha '(\xi /X)
\right\rvert^2
\, d \xi 
\asymp 1.
\]
Using the isometry property
\eqref{eq:R_chi-isom} of $R_\chi$,
we deduce similarly that $\|f_2\|^2 \asymp_{\pi,\tau} 1$
and then as in the tempered case
that $\mathcal{S}(f_1), \mathcal{S}(f_2) \ll_{\pi,\tau}
1$.
The proof is then complete.

\subsection{Upper bounds at uninteresting places\label{sec:uninteresting-upper-bounds}}
\label{sec-3-16}
\begin{lemma*}
  Let $\pi$ be a $\vartheta$-tempered
  generic irreducible unitary representation of $\GL_2(F)$.
  Let $\rtchi$ be a unitary character of $F^\times$.
  Let $\varphi_1 \in \pi, \varphi_1' \in \overline{\pi}, 
  \Phi \in \mathcal{I}_{\PGL_2}(\rtchi)$.
  Then
  \[
    \mathcal{P}_{\PGL_2(F)}(\varphi_1,\varphi_1',\Phi)
    \ll
    C(\rtchi)^{-A} \mathcal{S}(\varphi_1)^2 \mathcal{S}(\varphi_1')^2 \|\Phi\|^2.
  \]
\end{lemma*}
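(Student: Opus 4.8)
The plan is to reduce $\mathcal{P}_{\PGL_2(F)}(\varphi_1,\varphi_1',\Phi)$ to a local Rankin--Selberg integral in which the oscillation of $\rtchi$ is manifest, and then to harvest that oscillation through a standard decay estimate for local Mellin transforms in terms of the conductor of the twisting character. Since $\rtchi$ is unitary, $\mathcal{I}_{\PGL_2}(\rtchi)$ is an irreducible, unitary, tempered, generic representation with trivial central character, so the triple $(\pi,\overline{\pi},\mathcal{I}_{\PGL_2}(\rtchi))$ meets the hypotheses of \S\ref{sec-3-12} and $\mathcal{P}_{\PGL_2(F)}$ converges absolutely. Because the third factor is a principal series, the local triple-product sign is $+1$ and $\mathcal{P}_{\PGL_2(F)}$ does not vanish identically, so the linearization of \cite[Lem 3.4.2]{michel-2009} (whose proof covers the present $\vartheta$-tempered situation; cf.\ \S\ref{sec:linearize-local-shimura}) applies. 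Realizing $\varphi_1$ and $\varphi_1'$ in their $\psi$- and $\overline{\psi}$-Whittaker models as $W_1$ and $W_1'$, so that $\mathcal{S}(W_1) = \mathcal{S}(\varphi_1)$ and $\mathcal{S}(W_1') = \mathcal{S}(\varphi_1')$, it yields
\begin{equation*}
  \mathcal{P}_{\PGL_2(F)}(\varphi_1,\varphi_1',\Phi)
  =
  c \, | \ell(W_1,W_1',\Phi) |^2 ,
  \qquad c \asymp 1 ,
\end{equation*}
where $\ell(W_1,W_1',\Phi) := \int_{N \backslash \PGL_2(F)} W_1(g)\, W_1'(g)\, \Phi(g) \, d g$ is the local Rankin--Selberg integral attached to $\pi \times \overline{\pi} \otimes \rtchi$. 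It therefore suffices to show $| \ell(W_1,W_1',\Phi) | \ll C(\rtchi)^{-A}\, \mathcal{S}(\varphi_1)\, \mathcal{S}(\varphi_1')\, \| \Phi \|$.

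Next I would write $g = a(y) k$ in Iwasawa coordinates for $\PGL_2(F)$ and use the transformation law $\Phi(a(y) k) = |y|^{1/2}\, \rtchi(y)\, \Phi(k)$ together with the modulus character to obtain, up to a fixed normalization of Haar measures,
\begin{equation*}
  \ell(W_1,W_1',\Phi)
  =
  \int_{k \in K} \Phi(k) \left( \int_{y \in F^\times} W_1(a(y) k)\, W_1'(a(y) k)\, \rtchi(y)\, |y|^{-1/2} \, d^\times y \right) d k ,
\end{equation*}
the inner integral converging absolutely because $\vartheta < 1/4$, by \eqref{eqn:W-GL2-bound} applied to the $\vartheta$-tempered representations $\pi$ and $\overline{\pi}$. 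For fixed $k \in K$ set $h_k(y) := W_1(a(y) k)\, W_1'(a(y) k)\, |y|^{-1/2}$; this is a smooth function on $F^\times$ that decays faster than any polynomial as $|y| \to \infty$ and like $|y|^{1/2 - 2\vartheta}$ as $|y| \to 0$. Since $y \mapsto W_1(a(y) k)$ realizes $\pi(k)\varphi_1$ in the Kirillov model (and similarly for $W_1'$), the standard properties of Sobolev norms --- the product rule, Sobolev embedding, the bound \eqref{eqn:W-GL2-bound}, and the uniform boundedness of $\pi(k)$ on Sobolev norms for $k \in K$ --- give $\mathcal{S}_d(h_k) \ll_d \mathcal{S}(\varphi_1)\, \mathcal{S}(\varphi_1')$ uniformly in $k$.

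The crux is then the uniform estimate
\begin{equation*}
  \left| \int_{y \in F^\times} h(y)\, \rtchi(y) \, d^\times y \right|
  \ll_d C(\rtchi)^{-d}\, \mathcal{S}_d(h)
\end{equation*}
for smooth rapidly-decaying $h$ on $F^\times$ and unitary $\rtchi$: in the non-archimedean case one splits the integral over the cosets $\varpi^n \mathfrak{o}^\times$, expands $h$ along each in characters of $\mathfrak{o}^\times$, observes that only the contribution of $\rtchi^{-1}$ restricted to $\mathfrak{o}^\times$ survives and is bounded by $C(\rtchi)^{-d}$ times a Sobolev norm of $h$, and sums over $n$ using the decay of $h$; in the archimedean case one changes variables so that the integral becomes a Fourier transform and integrates by parts $d$ times, $C(\rtchi)$ playing the role of frequency. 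Applying this with $h = h_k$ and integrating over the compact group $K$ gives $| \ell(W_1,W_1',\Phi) | \ll C(\rtchi)^{-A}\, \mathcal{S}(\varphi_1)\, \mathcal{S}(\varphi_1') \int_K |\Phi(k)| \, d k$, and $\int_K |\Phi| \leq \| \Phi \|_{L^2(K)} = \| \Phi \|$ by Cauchy--Schwarz and the equality of the line-model and compact-model norms of $\mathcal{I}_{\PGL_2}(\rtchi)$ for unitary $\rtchi$ (cf.\ \S\ref{sec-3-3-5}). Squaring and renaming $A$ completes the argument. I expect this conductor-decay estimate to be the main obstacle, since it must be made uniform over all completions $F$ of the fixed number field --- archimedean and non-archimedean alike --- and over all vectors, with no unramifiedness assumption on $\pi$, so that only Sobolev control is available and constants must be tracked; by contrast the reduction of the first paragraph is a citation, and the Iwasawa bookkeeping and the comparison of the two models of $\mathcal{I}_{\PGL_2}(\rtchi)$ are routine.
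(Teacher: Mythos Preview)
Your proposal is correct and carries out in detail what the paper obtains by a one-line citation: the paper simply invokes \cite[Lem~3.5.2]{michel-2009} together with the comparison $\log C_{\Sob}(\mathcal{I}_{\PGL_2}(\rtchi)) \asymp \log C(\rtchi)$ from \cite[Lem~2.6.6]{michel-2009}, whereas you effectively reprove that lemma in the present special case. Both routes begin with the linearization \cite[Lem~3.4.2]{michel-2009}, but thereafter diverge: Michel--Venkatesh prove a general estimate of the shape $\mathcal{P}_{\PGL_2(F)} \ll C_{\Sob}(\pi_3)^{-A} \prod_i \mathcal{S}(\varphi_i)^2$ by exploiting the Sobolev-conductor machinery (negative Sobolev norms are automatically small on representations with large Sobolev conductor), and the paper then specializes to $\pi_3 = \mathcal{I}_{\PGL_2}(\rtchi)$; you instead work directly in Iwasawa coordinates and extract the $C(\rtchi)^{-A}$ saving from the oscillation of the character in the inner Mellin integral. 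Your approach is more self-contained and makes the source of the decay transparent, at the cost of redoing work already packaged in \cite{michel-2009}. Two minor remarks: the equality $\|\Phi\|_{L^2(K)} = \|\Phi\|$ between the compact-picture and line-model norms is in general only a proportionality with constant $\asymp 1$ (depending on Haar normalizations), which is harmless here; and your ``$\mathcal{S}_d(h_k)$'' is a Sobolev norm on $F^\times$ rather than on a group representation, so strictly speaking it lies outside the framework of \S\ref{sec-3-9}, but the intended meaning (control of dilation-derivatives archimedally, $\mathfrak{o}^\times$-smoothness non-archimedally) is clear and the bound you assert follows from \cite[\S3.2.5]{michel-2009}.
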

\begin{proof}
  This is a weak form of  \cite[Lemma 3.5.2]{michel-2009},
  taking into account that
  $\log C_{\Sob}(\mathcal{I}_{\PGL_2}(\rtchi)) \asymp \log C(\rtchi)$
  (see  \cite[Lem 2.6.6]{michel-2009} and the accompanying footnote).
\end{proof}

\begin{corollary*}
  Let $\pi$ be a $\vartheta$-tempered generic irreducible
  unitary representation of $\GL_2(F)$.
  Let $\rtchi$ be a unitary character of $F^\times$
  for which $\rtchi^2 = \chi$.
  Let $\varphi_1 \in \pi, \varphi_1' \in \overline{\pi},
  \varphi_2 \in \rho_{\psi}, \varphi_2' \in \overline{\rho_{\psi}}$.
  Set $\Phi := I_\chi(\varphi_2 \otimes \varphi_2') \in
  \mathcal{I}_{\SL_2}(\chi)$ (see \S\ref{sec:local-I-chi}).
  Then
  \[
    \mathcal{P}_{\PGL_2(F)}(\varphi_1, \varphi_1', \Phi^\rtchi)
    \ll C(\rtchi)^{-A} \prod_{i=1,2} \mathcal{S}(\varphi_i)^2 \mathcal{S}(\varphi_i')^2.
  \]
\end{corollary*}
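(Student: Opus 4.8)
The plan is to read off this corollary directly from the Lemma of \S\ref{sec-3-16}, combined with the two structural facts recorded in \S\ref{sec:induced-rep-local-from-sl2-to-pgl2} and \S\ref{sec:local-I-chi}. The only observation needed is that, by its very construction, $\Phi^\rtchi$ is a vector in $\mathcal{I}_{\PGL_2}(\rtchi)$: it is the image of $\Phi = I_\chi(\varphi_2 \otimes \varphi_2') \in \mathcal{I}_{\SL_2}(\chi)$ under the map $f \mapsto f^\rtchi$ inverse to the restriction isomorphism \eqref{eq:restriction-pgl2-sl2-local-isomorphism}. Hence the Lemma of \S\ref{sec-3-16} applies to $\Phi^\rtchi$ with no modification.

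Concretely, I would first invoke that Lemma with its input vector taken to be $\Phi^\rtchi$ and with the given unitary $\rtchi$, obtaining
\[
  \mathcal{P}_{\PGL_2(F)}(\varphi_1, \varphi_1', \Phi^\rtchi)
  \ll
  C(\rtchi)^{-A}\, \mathcal{S}(\varphi_1)^2 \mathcal{S}(\varphi_1')^2 \, \|\Phi^\rtchi\|^2 .
\]
Next, since $\rtchi$ is unitary, so is $\chi = \rtchi^2$, and therefore the map $f \mapsto f^\rtchi$ is an isometry (as noted in \S\ref{sec:induced-rep-local-from-sl2-to-pgl2}); thus $\|\Phi^\rtchi\| = \|\Phi\| = \|I_\chi(\varphi_2 \otimes \varphi_2')\|$. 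Finally, I would apply the Sobolev estimate for $I_\chi$ recorded in \S\ref{sec:local-I-chi}, valid here because $\chi$ is unitary, with the left-hand Sobolev index taken to be $0$ (so that $\mathcal{S}_0 = \|\cdot\|$): this gives $\|I_\chi(\varphi_2 \otimes \varphi_2')\| \ll \mathcal{S}(\varphi_2) \mathcal{S}(\varphi_2')$. Substituting the square of this last bound into the displayed inequality produces exactly the claimed estimate.

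I do not expect any genuine obstacle: the argument is a repackaging of results already in hand, and the main thing to watch is consistency of conventions. Specifically, one should check that $(\pi, \overline{\pi}, \mathcal{I}_{\PGL_2}(\rtchi))$ is an admissible triple for $\mathcal{P}_{\PGL_2(F)}$ --- which is immediate, since the product of central characters is $\omega_\pi \omega_\pi^{-1} = 1$ and a unitary principal series of $\PGL_2(F)$ is tempered and generic --- and that the Sobolev norms (\S\ref{sec-3-9}) and the line-model unitary structures (\S\ref{sec-3-3-5}) entering the Lemma of \S\ref{sec-3-16}, the isometry statement, and the $I_\chi$ bound are the same normalizations throughout, which they are by construction.
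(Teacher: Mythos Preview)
Your proposal is correct and follows essentially the same approach as the paper's own proof: apply the preceding lemma to $\Phi^\rtchi$, use the isometry $\|\Phi^\rtchi\| = \|\Phi\|$ from \S\ref{sec:induced-rep-local-from-sl2-to-pgl2}, and bound $\|\Phi\|$ via the Sobolev estimate for $I_\chi$ from \S\ref{sec:local-I-chi}.
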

\begin{proof}
  Recall from \S\ref{sec:induced-rep-local-from-sl2-to-pgl2} 
  that $\|\Phi^{\rtchi}\| = \|\Phi\|$.
  The estimate of \S\ref{sec:local-I-chi}
  implies that $\|\Phi\| \ll \mathcal{S}(\varphi_2) \mathcal{S}( \varphi_2')$,
  so we may conclude by the previous lemma.
\end{proof}
\subsection{Estimates at the interesting place\label{sec:interesting-local-estimates}}
\label{sec-3-17}
\subsubsection{Statement of result}
\label{sec-3-17-1}
\begin{lemma*}
  Assume we are in the unramified case
  (see \S\ref{sec:local-unramified-case}).
  Let $\pi, \tau$ and $\sigma \in \Wd_{\psi}(\tau)$ be
  as in \S\ref{sec:triple-integrals-matrix-coefficients-mp2},
  but assume now also that
  \begin{itemize}
  \item $\tau$ (and hence $\sigma$) is an unramified principal
    series representation, and that
  \item $\pi$
    is a twist of the special representation
    (\S\ref{sec-3-8-4}).
  \end{itemize}
  Then there exist $\varphi_1 \in \pi, \varphi_2 \in \rho_{\psi}, \varphi_3 \in
  \sigma$
  so that
  \begin{enumerate}[(i)]
  \item $\|\varphi_i\| \asymp 1$ for $i=1,2,3$.
  \item \begin{equation}\label{eqn:int-low-bnd}
      \mathcal{P}_{\SL_2(F)}(\varphi_1,\varphi_2,\varphi_3) \gg C(\ad(\pi) \otimes \tau)^{-1/4}.    \end{equation}
  \item
    Let $\chi, \rtchi$ be unitary characters of $F^\times$
    with $\rtchi^2 = \chi$.
    Set $\Phi := I_\chi(\varphi_2 \otimes \overline{\varphi_2}) \in
    \mathcal{I}_{\SL_2}(\chi)$.
    Then
    \begin{equation}\label{eqn:int-up-bnd}
            \mathcal{P}_{\PGL_2(F)}(\varphi_1, \overline{\varphi_1}, \Phi^{\rtchi})
      \ll C(\rtchi)^{-A}
      C(\pi \otimes \overline{\pi} \otimes \rtchi)^{-1/2}. 
    \end{equation}
  \end{enumerate}
\end{lemma*}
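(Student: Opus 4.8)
The plan is to realize $\varphi_1,\varphi_2,\varphi_3$ by the explicit vectors advertised in \S\ref{sec:overview-proof}, to deduce (ii) from the linearization of the metaplectic triple product in \S\ref{sec:linearize-local-shimura} together with a direct evaluation of the associated Rankin--Selberg functional, and to deduce (iii) from the corollary of \S\ref{sec:uninteresting-upper-bounds} when $\rtchi$ is ramified and from Schur orthogonality for the (square-integrable) special representation when $\rtchi$ is unramified. Since we are in the unramified case and $\pi$ is a twist of the special representation with unramified central character and conductor $\mathfrak{q}$, we may write $\pi = \Sp \otimes \mu$ with $\mu$ an unramified unitary character, and $\tau = \mathcal{I}_{\PGL_2}(\chi_0)$, $\sigma = \mathcal{I}_{\Mp_2}(\chi_0)$ with $\chi_0$ unramified and $|\Re(\chi_0)| \leq \vartheta$. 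I would take $\varphi_1 = W$, a scalar multiple of the newvector of $\pi$, given in the $\overline{\psi}$-Kirillov model by $W(a(y)) = \mu(y) 1_{\mathfrak{o}}(y)|y|$ (equivalently $W(t(y)) = 1_{\mathfrak{o}}(y)|y|^{2}$, since $\omega_{\pi} = \mu^{2}$); $\varphi_2 = 1_{\mathfrak{o}} \in \rho_{\psi}$, the unramified vector of $\rho_{\psi}^{+}$; and $\varphi_3$ equal to an $L^{2}$-normalized (resp.\ complementary-series-normalized) multiple of $1_{\mathfrak{p}}$ in the line model of $\sigma$, which is $\asymp q^{1/2} 1_{\mathfrak{p}}$. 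Routine integral evaluations give $\|\varphi_1\| = \zeta_F(2)^{1/2}$, $\|\varphi_2\| = 1$ and $\|\varphi_3\| \asymp 1$ (the last, in the complementary-series case, via \eqref{eq:G-asymp}), which is (i).

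For (ii), note first that $C(\ad(\pi) \otimes \tau) \asymp q^{4}$: indeed $c(\ad \Sp) = 2$ and $\tau$ is unramified, so the relevant monodromy computation gives conductor exponent $2\cdot 2 = 4$; thus the claim reads $\mathcal{P}_{\SL_2(F)}(\varphi_1,\varphi_2,\varphi_3) \gg q^{-1}$. By the lemma of \S\ref{sec:linearize-local-shimura} it suffices to show $|\ell(W,\varphi_2,\varphi_3)| \asymp q^{-1/2}$ when $\sigma$ is tempered, and that $\ell(W,\varphi_2,\varphi_3)$ and $\ell(W,\varphi_2,R_{\chi_0}\varphi_3)$ are positive reals $\asymp q^{-1/2}$ when $\sigma$ is a complementary series. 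Using the reformulation $\ell(W,\phi,f) = \int_F \overline{f(n'(x))}\,\mathcal{L}_{\chi_0}(n(x)W,n(x)\phi)\,dx$ from \S\ref{sec:some-estimates}, and observing that the unipotent phase $\overline{\psi}(x y^{2})$ produced by the Whittaker transformation of $W$ (a newvector, hence $n(\mathfrak{o})$-fixed) and the phase $\psi(x y^{2})$ produced by the Weil action on $1_{\mathfrak{o}}$ cancel for every $x \in F$, one obtains $\mathcal{L}_{\chi_0}(n(x)W,n(x)\varphi_2) = \mathcal{L}_{\chi_0}(W,\varphi_2)$ identically; hence $\ell(W,\varphi_2,\varphi_3) = q^{1/2}\cdot q^{-1}\cdot\mathcal{L}_{\chi_0}(W,\varphi_2) = q^{-1/2}\,\mathcal{L}_{\chi_0}(W,\varphi_2)$ with
\[
  \mathcal{L}_{\chi_0}(W,\varphi_2) = \int_{\mathfrak{o}\cap F^{\times}} |y|^{3/2}\,\overline{\chi_0}(y)\, d^{\times} y = \bigl(1 - q^{-3/2}\overline{\chi_0}(\varpi)\bigr)^{-1},
\]
which has absolute value $\asymp 1$ (the denominator is bounded away from $0$ by $\vartheta$-temperedness) and is a positive real when $\chi_0 = \eta|.|^{c}$. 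This settles the tempered case. In the complementary-series case the same cancellation identity reduces $\ell(W,\varphi_2,R_{\chi_0}\varphi_3)$ to $\mathcal{L}_{\chi_0}(W,\varphi_2)$ times a (regularized) integral of $R_{\chi_0}\varphi_3$ against the line coordinate, which I would evaluate and confirm to be a positive real $\asymp q^{-1/2}$ using the explicit description of $R_{\chi_0}$ via $\mathcal{K}$ from \S\ref{sec:metapl-intertw-oper} and the positivity $G(\chi_0,\psi,\xi) > 0$ together with \eqref{eq:G-asymp} of Lemma \ref{lem:asymp-for-G}.

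For (iii), I would split according to whether $\rtchi$ is ramified. If $\rtchi$ is ramified, then since $\varphi_2 = 1_{\mathfrak{o}}$ is real we have $\Phi = I_{\chi}(\varphi_2 \otimes \overline{\varphi_2})$, and the corollary of \S\ref{sec:uninteresting-upper-bounds} gives $\mathcal{P}_{\PGL_2(F)}(\varphi_1,\overline{\varphi_1},\Phi^{\rtchi}) \ll C(\rtchi)^{-A}\,\mathcal{S}(\varphi_1)^{4}\mathcal{S}(\varphi_2)^{4} \ll C(\rtchi)^{-A} q^{O(1)}$, since $\mathcal{S}(\varphi_1) \ll q^{O(1)}$ ($\varphi_1$ is fixed by the first principal congruence subgroup, cf.\ \S\ref{sec-3-9}) and $\mathcal{S}(\varphi_2) \asymp 1$; as a ramified $\rtchi$ forces $C(\rtchi) \geq q$ and a short conductor computation gives $C(\pi \otimes \overline{\pi} \otimes \rtchi) \asymp C(\rtchi)^{4}$, enlarging $A$ absorbs the $q^{O(1)}$ and yields the claimed bound $\ll C(\rtchi)^{-A} C(\pi\otimes\overline{\pi}\otimes\rtchi)^{-1/2}$. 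If $\rtchi$ is unramified, then $C(\rtchi) = 1$ and $C(\pi\otimes\overline{\pi}\otimes\rtchi) \asymp q^{2}$, so the task is $\mathcal{P}_{\PGL_2(F)}(\varphi_1,\overline{\varphi_1},\Phi^{\rtchi}) \ll q^{-1}$; here $\chi = \rtchi^{2}$ is unramified, so by the property of $I_{\chi}$ recalled in \S\ref{sec:local-I-chi}, $\Phi^{\rtchi}$ is $L(\chi,1)$ times the unramified vector $f_0$ of $\mathcal{I}_{\PGL_2}(\rtchi)$, of norm $\asymp 1$, and since $\mu$ is unitary the matrix-coefficient product collapses to $\mathcal{P}_{\PGL_2(F)}(\varphi_1,\overline{\varphi_1},\Phi^{\rtchi}) = |L(\chi,1)|^{2}\int_{\PGL_2(F)} |\langle \Sp(g)\varphi_1,\varphi_1\rangle|^{2}\,\langle \mathcal{I}_{\PGL_2}(\rtchi)(g) f_0, f_0\rangle\, dg$ with $\varphi_1$ a Steinberg newvector. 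Bounding the last matrix coefficient trivially by $\|f_0\|^{2} \asymp 1$ and applying Schur orthogonality for the square-integrable representation $\Sp$, whose formal degree is $\asymp q$ for our choice of Haar, gives $\int_{\PGL_2(F)} |\langle \Sp(g)\varphi_1,\varphi_1\rangle|^{2}\, dg = \|\varphi_1\|^{4}/d_{\Sp} \asymp q^{-1}$, hence the required bound.

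The heart of the argument is the short, exact evaluation of $\ell$ in (ii), which rests on the cancellation of unipotent phases and on the special representation's newvector being $n(\mathfrak{o})$-fixed. I expect the main obstacle to be the complementary-series subcase of (ii): there one must control the normalized intertwiner $R_{\chi_0}$ applied to the non-spherical vector $q^{1/2} 1_{\mathfrak{p}}$, whose image has a tail in the line model that is only barely integrable, so the regularization (and the positivity) must be handled carefully.
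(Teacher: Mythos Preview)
Your choice of vectors and the tempered case of (ii) match the paper exactly, and your treatment of (iii) is correct though it differs from the paper in both subcases. For ramified $\rtchi$ the paper observes more sharply that $\mathcal{P}_{\PGL_2(F)}(\varphi_1,\overline{\varphi_1},\Phi^{\rtchi}) = 0$: since $c(\mathcal{I}_{\PGL_2}(\rtchi)) = 2c(\rtchi) \geq 2$, that induced representation has no nonzero $K_0[1]$-invariant vectors, whereas $\varphi_1$ transforms under a unitary character of $K_0[1]$, so the invariant hermitian form vanishes. For unramified $\rtchi$ the paper cites an external lemma, and your Schur-orthogonality argument (using that the Steinberg representation is square-integrable with formal degree $\asymp q$) is a pleasant self-contained substitute.

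The genuine gap is the complementary-series case of (ii). First, your claim that the unit-norm multiple of $1_{\mathfrak{p}}$ is $\asymp q^{1/2} 1_{\mathfrak{p}}$ is false: the computation via \eqref{eq:G-asymp} (carried out in the paper) gives $\|q^{1/2} 1_{\mathfrak{p}}\|^2_{\sigma_\chi} \asymp q^{-2c}$ for $\chi = \eta|.|^c$ with $c > 0$, so the unit vector is $\asymp q^{c+1/2} 1_{\mathfrak{p}}$, and $q^c$ is not $\asymp 1$ uniformly in $q$. Second, even granting a correct normalization, your plan requires evaluating $\ell(W,\varphi_2,R_{\chi_0}\varphi_3)$, hence computing $R_{\chi_0}(1_{\mathfrak{p}})$ in the line model; this function has a slowly decaying tail, and you do not carry out (or regularize) the resulting integral. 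The paper avoids both issues by a polarization trick: rather than a single $\varphi_3$, it takes $f_1 := q^c f_0$ and $f_2 := q^{-c} R_{\chi_0}^{-1}(f_0^*)$, where $f_0 \in \sigma_{\chi_0}$ and $f_0^* \in \sigma_{\chi_0^{-1}}$ are each given by $q^{1/2} 1_{\mathfrak{p}}$ in their respective line models. One checks $\|f_i\| \asymp 1$ directly from \eqref{eq:G-asymp}, and then \eqref{eq:linearized-local-shimura-period-nontempered} gives $\mathcal{H}(f_1,f_2) = \ell(W,\phi,f_0)\,\overline{\ell(W,\phi,f_0^*)}$, which is a product of two factors each evaluated exactly as in the tempered case (no intertwiner appears), yielding a positive real $\asymp q^{-1}$. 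The polarization identity then produces a suitable $\varphi_3 \in \{f_1, f_2, f_1+f_2\}$.
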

\begin{remark*}
  It is natural to ask whether the conclusion of the lemma
  holds for more general classes of $\pi$.
  For some negative results in that direction, see
  \S\ref{sec:comments-other-aspects}.
\end{remark*}

We note that
the conclusion of the lemma
depends upon $\pi$
only through its restriction to $\SL_2(F)$
and the quantities $C(\ad(\pi) \otimes \tau)$ and $C(\pi \otimes
\overline{\pi} \otimes \omega)$.
These are unchanged upon replacing $\pi$ by a twist.
For the proof, we may and shall thus assume
that $\pi$ is the special representation itself,
rather than a twist thereof.

\subsubsection{Conductor formulas}
\label{sec-3-17-2}
\begin{itemize}
\item Since $\tau$ is unramified and $\pi$ is the special
  representation, one has
  $C(\ad(\pi) \otimes \tau) = C(\ad(\pi))^2 = q^4$.  (Indeed,
  the first of these identities follows from the fact that if
  $\tau$ is the normalized induction of a pair of unramified
  characters $(\nu_1, \nu_2)$, then
  $C(\ad(\pi) \otimes \tau) = C(\ad(\pi) \otimes \nu_1)
  C(\ad(\pi) \otimes \nu_2)$
  and $C(\ad(\pi) \otimes \nu_i) = C(\ad(\pi))$.  The second of
  these identities follows from the identity
  $C(\ad(\pi)) = C(\pi \otimes \overline{\pi })$ together with
  \cite[Prop 1.4]{MR533066} and the relation between
  $\eps$-factors and analytic conductors recalled in, e.g.,
  \cite[\S3.1.12]{michel-2009}.)
\item 
  If $\rtchi$ is unramified,
  then
  $C(\pi \otimes \overline{\pi} \otimes \rtchi)
  = C(\ad(\pi)) = q^2$.
\end{itemize}
\subsubsection{Choice of models}
As in \S\ref{sec-3-15},
we realize $\pi$ in its $\overline{\psi}$-Whittaker model,
$\rho_{\psi}$ on $\mathcal{S}(F)$,
and $\sigma = \mathcal{I}_{\Mp_2}(\chi)$ in its induced model.
We again write $W,\phi,f$ instead of
$\varphi_1,\varphi_2,\varphi_3$.

\subsubsection{Choice of $W,\phi$}
\begin{itemize}
\item Let $W \in \pi$ be given in the Kirillov model by
$W(a(y)) := 1_{\mathfrak{o}}(y) |y|$; it is then a
newvector with $\|W\| \asymp 1$.  
\item Let $\phi \in \rho_{\psi}$ be given by $1_{\mathfrak{o}}$.
\end{itemize}
The assertions concerning
$L^2$-norms of $W$ and $\phi$ are clear by construction.
We will choose $f \in \sigma$ later,
separately according as $\sigma$ is tempered or not.

\subsubsection{Upper bounds}
\label{sec-3-17-5}
We now verify \eqref{eqn:int-up-bnd}.
If $c(\rtchi) = 0$,
then it follows from \S\ref{sec:local-I-chi} that $\Phi^\rtchi$ 
is a newvector of norm $\O(1)$,
and the required estimate
is given by \cite[Lem 4.4]{PDN-HQUE-LEVEL}.
Write $\nu := \mathcal{I}_{\PGL_2}(\rtchi)$.
If $c(\rtchi) \geq 1$,
then $c(\nu) = 2 c(\rtchi) \geq 2$,
so $\nu$ contains no nonzero $K_0[1]$-invariant vectors;
since $\varphi_1$ transforms under a unitary character of $K_0[1]$,
it follows from the $\GL_2(F)$-invariance
of $\mathcal{P}_{\PGL_2(F)}$ that 
$\mathcal{P}_{\PGL_2(F)}(\varphi_1, \overline{\varphi_1},
\Phi^{\rtchi}) = 0$.
This establishes the required upper bound (in a sharper form).

\subsubsection{Lower bounds: tempered case}
Suppose that $\sigma$ is tempered, so that $\chi$ is unitary.
Let $f \in \sigma$ be given in the line model
by $f(n'(x)) := q^{1/2} 1_\mathfrak{p}(x)$.
Then $\|f\| = 1$.
We now verify \eqref{eqn:int-low-bnd}.
Write $g = t(y) n'(z)$.
Then $f(g) = |y| \chi_{\psi}(y) \chi(y) q^{1/2}
1_{\mathfrak{p}}(z)$.
If $f(g) \neq 0$,
then $z \in \mathfrak{p}$,
hence $W(g) = |y|^2 1_{\mathfrak{o}}(y)$
and $\rho_{\psi}(g) \phi(1)
= \chi_{\psi}(y) |y|^{1/2} 1_{\mathfrak{o}}(y)$.
Thus
\[
  W(g) \rho_{\psi}(g) \phi(1) \overline{f(g)}
  =
  \overline{\chi}(y) 
  |y|^{7/2}
  1_{\mathfrak{o}}(y) q^{1/2} 1_{\mathfrak{p}}(z).
\]
It follows from \S\ref{sec:linearize-local-shimura} that
\begin{equation}\label{eq:lower-bound-interesting-case}
  \mathcal{P}_{\SL_2(F)}(\varphi_1,\varphi_2,\varphi_3)
  \asymp
  \left\lvert
    \int_{y \in F^\times}
    \int_{z \in F}
    | y |^{7/2}
    \chi(y)
    1_{\mathfrak{o}}(y) 
    q^{1/2}
    1_{\mathfrak{p}}(z)
    \, \frac{d^\times y}{|y|^2} \, d z
  \right\rvert^2
  \asymp q^{-1},
\end{equation}
which leads to the required lower bound.

\subsubsection{Lower bounds: non-tempered case}
Suppose now that $\sigma$ is non-tempered, thus
$\chi = \eta |.|^c$ with $\eta$ unramified quadratic, $c$ real, and
$0 \neq |c| \leq \vartheta$.
We define $\mathcal{H} : \sigma \otimes \overline{\sigma }
\rightarrow \mathbb{C}$
as in \S\ref{sec:non-tempered-case-for-lower-bounds}
and reduce to finding $f_1,f_2 \in \sigma$ for which
$\|f_1\|, \|f_2\| \ll 1$
and
$\mathcal{H}(f_1,f_2) \gg q^{-1}$.
For this we choose $f_0 \in \sigma$ and $f_0^* \in \sigma^* =
\mathcal{I}_{\PGL_2}(\chi^{-1})$
to be given in the line model by $n'(x) \mapsto q^{1/2}
1_\mathfrak{p}(x)$.
We take
\[f_1 := q^c f_0,
\quad
f_2 := q^{-c} R_\chi^{-1}(f_0^*).
\]
The estimate $\mathcal{H}(f_1,f_2) \gg q^{-1}$ is proved
as in the tempered case.
Since $\eta$ is unramified
and $\int_{x \in F} f_1(n'(x)) \psi(-\xi x) \, d x
= q^{c-1/2} 1_{\mathfrak{p}^{-1}}(\xi)$,
we see from \eqref{eq:G-asymp}
that
\[
\|f_1\|^2
\asymp
q^{2 c - 1}
\int_{\xi \in F}
|\xi|^{-c}
C(\chi_\xi)^{-c}
1_{\mathfrak{p}^{-1}}(\xi) \, d \xi = \sum_{n \geq -1} \iota(n),
\]
where $\iota(n)$ denotes the contribution
from $\xi \in \mathfrak{p}^n - \mathfrak{p}^{n+1}$.
We have
$\iota(-1) \asymp 1$, while 
$\iota(2 n) \asymp q^{2 c - 1 - 2 n(1-c)}$ and
$\iota(2 n + 1) \asymp q^{c-1 - (2 n + 1)(1 - c)}$
for $n \geq 0$.  Thus
$\|f_1\|^2 \asymp 1$.
We verify similarly that $\|f_2\|^2 \asymp 1$.
The proof is then complete.

\subsection{Sobolev--type bounds for twisting isometries\label{sec:local-twisting-bounds}}
\label{sec-3-18}
Let $\pi$ be an irreducible unitary representation of
$\GL_2(F)$,
and let $\chi$ be a unitary character of $F^\times$.
We may then form the tensor product $\pi \otimes \chi$
of $\pi$ by the one-dimensional representation spanned by the
function
$\GL_2(F) \ni g \mapsto \chi(\det g)$.
The map $v \mapsto v \otimes 1$ defines an
isomorphism $j_\chi : \pi \rightarrow \pi \otimes \chi$ of
vector spaces.
The representation $\pi \otimes \chi$
admits a natural unitary structure.

The map $j_\chi$ is an isometry,
and is $\SL_2(F)$-equivariant,
but is typically not $\GL_2(F)$-equivariant,
and does not in general preserve Sobolev norms.
However, it only polynomially distorts the latter:
\begin{lemma*}
  Let $d \geq 0$ be fixed.
  Then
  $\mathcal{S}_d(j_\chi(v))
  \ll C(\chi)^{O(1)} \|v\| + \mathcal{S}_d(v) \ll C(\chi)^{O(1)}
  \mathcal{S}_d(v)$
  for all $v \in \pi$.
\end{lemma*}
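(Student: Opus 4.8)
The second inequality is immediate, since the Sobolev norms dominate the $L^2$-norm: $\|v\| \leq \mathcal{S}_d(v)$ for all $v$. So the plan is to prove the first inequality, $\mathcal{S}_d(j_\chi(v)) \ll C(\chi)^{O(1)}\|v\| + \mathcal{S}_d(v)$, and the idea is to exploit that $j_\chi$ is an $\SL_2(F)$-equivariant isometry. Write $\Delta = \Delta_\pi$ for the operator of \S\ref{sec-3-9}, so that $\mathcal{S}_d(v) = \|\Delta^d v\|$, and let $\widetilde{\Delta} := j_\chi^{-1} \circ \Delta_{\pi \otimes \chi} \circ j_\chi$ denote the corresponding operator for $\pi \otimes \chi$, transported back to $\pi$ along the isometry $j_\chi$. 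The target is the bound
\[
  \widetilde{\Delta}^{2d} \ll \Delta^{2d} + C(\chi)^{O(1)}
\]
of positive self-adjoint operators, together with the statement that $\widetilde{\Delta}$ and $\Delta$ commute; the lemma then follows upon pairing against $v$, since $\|\widetilde{\Delta}^d v\|^2 = \langle \widetilde{\Delta}^{2d} v, v\rangle \ll \langle \Delta^{2d} v, v\rangle + C(\chi)^{O(1)}\|v\|^2 = \mathcal{S}_d(v)^2 + C(\chi)^{O(1)}\|v\|^2$, and one takes square roots.

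To establish the displayed bound I would unwind the construction of $\Delta$ from \cite[\S2]{michel-2009} and \cite[\S4.6, \S5.3]{nelson-theta-squared} recalled in \S\ref{sec-3-9}, using that $(\pi \otimes \chi)(g)(v \otimes 1) = \chi(\det g)\,(\pi(g)v) \otimes 1$ and that $j_\chi$ intertwines the $\SL_2(F)$-actions. This should show that $\widetilde{\Delta}$ agrees with $\Delta$ in all ``$\SL_2(F)$-directions'' and differs only along the single direction transverse to $\SL_2(F)$: in the archimedean case, the line $\mathfrak{z}$ of scalar matrices in $\gl_2(F)$, whose action gets translated by the scalar $d\chi(\tr(\cdot))$ of size $\ll C(\chi)^{O(1)}$ (equivalently, the central character of $\pi\otimes\chi$ is $\omega_\pi\chi^2$); in the non-archimedean case, the family of averaging operators over the relevant compact subgroups, which get replaced by their $\chi\circ\det$-twists, raising the level of $v\otimes 1$ from $\operatorname{level}(v)$ to $\max(\operatorname{level}(v), c(\chi))$. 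In either case the modification is a ``shift'' of polynomial size in $C(\chi)$ along a direction in which $\widetilde{\Delta}$ and $\Delta$ are simultaneously diagonalizable; that gives the commutation and, spectrally (via $(|a|+|b|)^2 \leq 2a^2 + 2b^2$, then, since $\Delta \geq 1$ and the operators commute, $(2\Delta^2 + 2C(\chi)^{O(1)})^d \ll \Delta^{2d} + C(\chi)^{O(1)}$ by the binomial theorem and weighted AM--GM, the point being $\max(a,b)\le a+b$ rather than the product $ab$), the displayed operator inequality.

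The functional calculus and the elementary inequalities at the end are routine. I expect the main obstacle to be the bookkeeping needed to verify rigorously, from the definitions of \S\ref{sec-3-9}, that transporting $\Delta_{\pi\otimes\chi}$ along the $\SL_2(F)$-equivariant isometry $j_\chi$ alters $\Delta_\pi$ only in the single direction transverse to $\SL_2(F)$, commuting with $\Delta_\pi$, and only by a shift polynomially controlled by $C(\chi)$; once that is pinned down, the passage from the naive ``product'' bound $C(\chi)^{O(1)}\mathcal{S}_d(v)$ to the sharper ``sum'' form is exactly the elementary convexity step just indicated.
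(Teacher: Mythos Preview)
Your approach is correct and is essentially the paper's own: the paper also splits into archimedean and non-archimedean cases, uses $K[c(\chi)]$-invariance of $g\mapsto\chi(\det g)$ in the non-archimedean case to read off $\mathcal{S}_d(j_\chi(v))^2 \leq q^{2dc(\chi)}\|v\|^2 + \mathcal{S}_d(v)^2$ directly from the level decomposition, and in the archimedean case invokes exactly the identity $X j_\chi(v) = j_\chi(Xv + d\chi(\tr X)v)$ underlying your ``shift along $\mathfrak{z}$''. One refinement to watch in the archimedean case: the induced shift in $\Delta$ is $-2aX_0 - a^2$ with $a = d\chi(\tr X_0)$, and the cross-term $-2aX_0$ involves the scalar by which $X_0$ acts on $\pi$ (coming from $\omega_\pi$, not from $\chi$); this is harmless because $-X_0^2$ is itself a nonnegative summand of $\Delta-1$, so that cross-term is bounded by $C(\chi)^{O(1)}\Delta^{1/2}$ and your convexity step still yields $\tilde\Delta \ll \Delta + C(\chi)^{O(1)}$.
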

\begin{proof}
  We assume familiarity with \cite[\S2]{michel-2009}.
  Suppose first that $F$ is
  non-archimedean.  Let $K[n] \leq \GL_2(\mathfrak{o})$ denote
  the $n$th principal congruence subgroup.  Write
  $C(\chi) = q^c$ with $c \in \mathbb{Z}_{\geq 0}$.  Then the
  function $g \mapsto \chi(\det(g))$ is $K[c]$-invariant.
  Expanding the definition of $\mathcal{S}_d$
  (see \cite[\S2.5]{michel-2009}),
  we obtain
  $\mathcal{S}_d(j_\chi(v))^2 \leq q^{d c} \|v\|^2 +
  \mathcal{S}_d(v)^2$.
  In the archimedean case, we argue similarly using that
  $X j_\chi(v) = j_\chi(X v + d \chi(\trace(X)) v)$ for $X$ in
  the Lie algebra of $\GL_2(F)$.
\end{proof}

\section{Global preliminaries\label{sec:global-prelims}}
\label{sec-4}
Let $F$ be a number field with adele ring $\mathbb{A}$.
Fix a nontrivial unitary character $\psi : \mathbb{A}/F \rightarrow \mathbb{C}^{\times}$.

\subsection{Generalities}
\label{sec-4-1}
\subsubsection{The metaplectic group}
\label{sec-4-1-1}
Let $\Mp_2(\mathbb{A})$ denote the metaplectic double cover of
$\SL_2(\mathbb{A})$.  It may be identified with the set of pairs
$(\sigma,\zeta) \in \SL_2(\mathbb{A}) \times \{\pm 1\}$ with the
group law
$(\sigma,\zeta) (\sigma',\zeta') = (\sigma \sigma ', \zeta \zeta
' c(\sigma,\sigma '))$
where
$c(\sigma,\sigma ') := \prod_\mathfrak{p}
c_\mathfrak{p}(\sigma_\mathfrak{p},\sigma_\mathfrak{p} ')$
with $c_\mathfrak{p}$ the local cocycle of
\S\ref{sec:local-metaplectic-group}.  Given
$u = (\sigma,\zeta) \in \Mp_2(\mathbb{A})$, we denote by
$\pr_{\SL_2(\mathbb{A})}(u) := \sigma$ its projection to
$\SL_2(\mathbb{A})$ and by
$\pr_{\SL_2(F_\mathfrak{p})}(u) := \sigma_\mathfrak{p}$ its
projection to $\SL_2(F_\mathfrak{p})$.  

Recall that a function
$f : \Mp_2(\mathbb{A}) \rightarrow \mathbb{C}$ is called
\emph{genuine} if $f(\sigma,-1) = - f(\sigma,1)$ for all
$\sigma \in \SL_2(\mathbb{A})$.  A product $f_1 f_2$ of genuine
functions descends to a function
$\SL_2(\mathbb{A}) \rightarrow \mathbb{C}$ that we also denote
by $f_1 f_2$.  We identify $\SL_2(F)$ with its image
under the canonical homomorphism
$\SL_2(F) \rightarrow \Mp_2(\mathbb{A})$ lifting the inclusion
$\SL_2(F) \hookrightarrow \SL_2(\mathbb{A})$,
which may be
characterized in turn by requiring that the elementary theta functions
defined below  be left $\SL_2(F)$-invariant.

\subsubsection{Groups, measures, norms\label{sec:global-groups-measures-norms}}
\label{sec-4-1-2}
For $G \in \{\GL_1, \SL_2, \PGL_2\}$,
we denote by $[G] := G(F) \backslash G(\mathbb{A})$
the corresponding quotient.
When $G \neq \GL_1$, we equip $[G]$ with Tamagawa measure,
so that $\vol([\SL_2]) = 1$, $\vol([\PGL_2]) = 2$.
We equip $[\GL_1]$
with an arbitrary Haar measure.
In all cases, the Haar on $[G]$
lifts to a Haar on $G(\mathbb{A})$
and then factors as a product of Haar measures on $G(F_{\mathfrak{p}})$
which, for almost all finite primes $\mathfrak{p}$,
assign volume one to maximal compact subgroups.
We thereby obtain for each place $\mathfrak{p}$ of $F$
a local field $F_{\mathfrak{p}}$ with nontrivial unitary character $\psi_{\mathfrak{p}}$
and Haar measures on each of the groups $G(F_\mathfrak{p})$.
The discussion of \S\ref{sec-3} then applies.

For $g = (g_{\mathfrak{p}})$ in one of the groups
$\SL_2(\mathbb{A})$ or $\PGL_2(\mathbb{A})$,
we denote by   $\|g \| := \prod \|g_\mathfrak{p} \|$
and
$\|\Ad(g) \| := \prod \|\Ad(g_\mathfrak{p}) \|$
the products of the local norms defined in \S\ref{sec-3-1-10}.
We extend this definition to $\Mp_2(\mathbb{A})$ via pullback.
\subsubsection{Convention on factorization of unitary structures}
\label{sec-4-1-3}
Let $\pi$ be an automorphic representation of one of the groups
$\SL_2(\mathbb{A})$, $\Mp_2(\mathbb{A})$, $\GL_2(\mathbb{A})$.
Assume that it factors as a restricted tensor product
$\pi = \otimes \pi_\mathfrak{p}$; this happens for each $\pi$
that we consider.  If $\pi$ is unitary and equipped with some
specific unitary structure, then we always fix a unitary
structure on the components $\pi_\mathfrak{p}$ that is
compatible with this factorization, so that
$\|\otimes \varphi_\mathfrak{p} \| = \prod
\|\varphi_\mathfrak{p}\|$.
\subsubsection{``Good places'' and ``unramified''}
\label{sec-4-1-4}
We say that a place $\mathfrak{p}$ of $F$ is \emph{good} if
$\mathfrak{p}$ is non-archimedean, $\norm(\mathfrak{p})$ is odd, $F_\mathfrak{p}$ is
unramified over its prime subfield, $\psi_\mathfrak{p}$ is
unramified,
and the Haar measures 
defined in \S\ref{sec:global-groups-measures-norms}
on the groups $G(F_{\mathfrak{p}})$
assign volume one to maximal compact subgroups.
Thus almost all (i.e., all but finitely many) places are good,
and the assumptions of \S\ref{sec-3-1-8} (defining the ``unramified case'')
apply whenever $\mathfrak{p}$ is good.
We say that $\mathfrak{p}$ is \emph{bad} if it is not good.

For a good place $\mathfrak{p}$ and
$G \in \{\GL_1, \SL_2, \PGL_2, \Mp_2\}$,
we say that a factorizable vector $\varphi = \otimes
\varphi_\mathfrak{p}$
in a factorizable representation $\pi = \otimes
\pi_\mathfrak{p}$
of $G(\mathbb{A})$
is \emph{unramified at $\mathfrak{p}$}
if the local component $\varphi_\mathfrak{p}$ is
unramified in the sense of \S\ref{sec:local-unram-stuff};
otherwise, we say that $\varphi$ \emph{ramifies at}
or \emph{is ramified at} $\mathfrak{p}$.

\subsection{Hecke characters}
\label{sec-4-2}
A \emph{Hecke character} is a continuous
homomorphism $\chi : \mathbb{A}^\times /F^\times \rightarrow
\mathbb{C}^\times$.
Its \emph{real part} is the real number
$\Re(\chi)$ for which $|\chi(y)| = |y|^{\Re(\chi)}$.
We denote by $\mathfrak{X}$ the group
of Hecke characters and by $\mathfrak{X}(c)$ the subset
consisting of those with real part $c$.
The space $\mathfrak{X}(c)$ comes equipped with a natural measure
dual to the given Haar measure on $\mathbb{A}^\times/F^\times$;
it may be characterized by requiring that
for all real numbers $c$ and test functions $f$ on $\mathbb{A}^\times/F^\times$,
 the inversion formula
$\int_{\chi \in \mathfrak{X}(c)} \int_{y \in \mathbb{A}^\times/F^\times} f(y) \chi(y)
= f(1)$ holds.

\subsection{Sobolev norms}
\label{sec-4-3}
Given a unitary representation
$\pi$ of one of the groups
or
$\SL_2(\mathbb{A}), \Mp_2(\mathbb{A}), \PGL_2(\mathbb{A}),
\GL_2(\mathbb{A})$,
we define Sobolev norms $\mathcal{S}_d$ on $\pi$
as in
\cite[\S2]{michel-2009} and \cite[\S4.6,
  \S5.3]{nelson-theta-squared}.
If $\pi$ factors as
a restricted tensor product $\otimes \pi_{\mathfrak{p}}$
and we fix unitary structures on its local components
compatible with this factorization,
then the Sobolev norms factor on pure tensors:
$\mathcal{S}_d(\otimes v_{\mathfrak{p}}) = \prod \mathcal{S}_d(v_{\mathfrak{p}})$.
We retain the convention of \S\ref{sec-3-9}
concerning ``implied indices.''

The refined ``automorphic'' Sobolev norms
$\mathcal{S}_d^{\mathbf{X}}$ considered in
\cite[\S2]{michel-2009} and \cite[\S4.6,
\S5.3]{nelson-theta-squared} will not be used in the present
paper.

\subsection{Elementary theta functions}
\label{sec-4-4}
Let $\rho_{\psi}$ denote the Weil representation
attached to $\psi$
of
$\Mp_2(\mathbb{A})$
acting on the Schwartz--Bruhat space $\mathcal{S}(\mathbb{A})$.
It is the restricted tensor product of the spaces considered
in \S\ref{sec:weil-repn}.
For $\phi \in \rho_{\psi}$,
the elementary theta
function
$\theta(\phi) : \SL_2(F) \backslash \Mp_2(\mathbb{A})
\rightarrow \mathbb{C}$
is the genuine automorphic form defined by the convergent series
$\theta(\phi)(g) := \sum_{\alpha \in F} (\rho_{\psi}(g)
\phi)(\alpha)$.
The map $\rho_{\psi } \ni \phi \mapsto \theta(\phi)$
is equivariant, and quite nearly unitary (see \eqref{eqn:norm-of-theta})
on the orthogonal complement
$\{ \phi : \phi(-x) = \phi(x) \text{ for all } x\}$
of its kernel $\{ \phi : \phi(-x) = - \phi(x) \text{ for all } x\}$.
\subsection{Eisenstein series}
\label{sec-4-5}
\subsubsection{Induced representations}
\label{sec-4-5-1}
For a Hecke character $\chi$ and $G \in \{\SL_2, \PGL_2\}$ we
define an induced representation $\mathcal{I}_{G}(\chi)$ of
$G(\mathbb{A})$ either by mimicking the local definitions of \S\ref{sec-3-3} or
by taking the restricted tensor products of the representations
attached there to the local components $\chi_v$.  
If $\chi$ is unitary, then $\mathcal{I}_{G}(\chi)$ 
is unitary,
and we equip it with
the tensor product of the locally-defined unitary structures.
\subsubsection{Intertwiners}
\label{sec-4-5-2}
Denote by
$\Eis_\chi$ (or simply $\Eis$ when $\chi$ is clear by context)
the standard map from $\mathcal{I}_G(\chi)$ to the space
of automorphic forms, defined for $\Re(\chi)$ large enough by
averaging over left cosets of the standard Borel in $G(F)$ and
in general by meromorphic continuation.
\subsubsection{Rankin--Selberg period formulas on $\PGL_2$}
\label{sec-4-5-3}
\label{sec:rs-period-formula-pgl2}
The following lemma paraphrases a special case of
\cite[\S4.4]{michel-2009}
(compare with \cite[\S2.2.2]{michel-2009}).
\begin{lemma*}
  Let $\pi$ be a cuspidal automorphic representation
  $\PGL_2(\mathbb{A})$.  Let $\chi$ be a unitary character of
  $\mathbb{A}^\times / F^\times$.  Set
  $\pi_1 := \pi, \pi_2 := \overline{\pi}$ and
  $\pi_3 = \mathcal{I}_{\PGL_2}(\chi)$.  Equip $\pi_1, \pi_2$
  with the norm coming from $L^2([\PGL_2])$ and $\pi_3$ with
  that coming from $\mathcal{I}_{\PGL_2}(\chi)$.  For $i=1,2,3$,
  let
  $\varphi_i = \otimes \varphi_{i \mathfrak{p}} \in \pi_i =
  \otimes \pi_{i \mathfrak{p}}$
  be a factorizable vector.  Let $S$ be a finite set of places
  of $F$, containing the bad ones, with the property that
  $\varphi_{i \mathfrak{p} }$ is an unramified unit vector for
  all $i$ and each $\mathfrak{p} \notin S$.  Then the squared
  period
  \[
  \left\lvert \int_{[\PGL_2]} \varphi_1 \varphi_2 \Eis_{\PGL_2}(\varphi_3) \right\rvert^2
  \]
  is equal to
  \[
  c
  \frac{  \zeta^{(S)}(2)
    L^{(S)}(\pi_1 \otimes \pi_2 \otimes \pi_3, \tfrac{1}{2} )}{
    L^{(S)}(\ad(\pi), 1)^2 |L^{(S)}(\chi,1)|^2
  }
  \prod_{v \in S}
  \mathcal{P}_{\PGL_2(F_v)}(\varphi_{1 v}, \varphi_{2 v}, \varphi_{3 v}).
  \]
  for some $c > 0$ depending only upon $F$.
\end{lemma*}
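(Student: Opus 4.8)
The plan is to obtain this lemma as a transcription of the Rankin--Selberg period computation of \cite[\S4.4]{michel-2009} (cf.\ \cite[\S2.2.2]{michel-2009}) into the measure and unitary-structure conventions fixed in \S\ref{sec-4}, so that the substance of the proof lies in matching normalizations rather than in any new idea. Since $\varphi_1 \in \pi$ and $\varphi_2 \in \overline{\pi}$ are cuspidal, the product $\varphi_1 \varphi_2$ is rapidly decreasing; as $\Eis_{\PGL_2}(\varphi_3)$ is of moderate growth, the period converges absolutely and no regularization is needed. A first step is to reduce to the range $\Re(\chi) \gg 0$ by replacing $\chi$ with $\chi |.|^s$: for $\Re(s)$ large the defining Borel sum for $\Eis_{\PGL_2}(\varphi_3)$ converges, one performs the unfolding there, and then analytic continuation in $s$ delivers the identity at $s=0$, recalling that $\mathcal{I}_{\PGL_2}(\chi)$ is irreducible for unitary $\chi$ (\S\ref{sec-3-3-5}).

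For the unfolding, writing $\Eis_{\PGL_2}(\varphi_3)(g) = \sum_{\gamma \in B(F) \backslash \PGL_2(F)} \varphi_3(\gamma g)$ collapses the period to $\int_{B(F) \backslash \PGL_2(\mathbb{A})} \varphi_1 \varphi_2 \varphi_3$; since $\varphi_3$ is left $N(\mathbb{A})$-invariant, the integral over $N(F) \backslash N(\mathbb{A})$ extracts the constant term of $\varphi_1 \varphi_2$ along $N$, which --- $\varphi_1$ and $\varphi_2$ being cuspidal --- equals the Rankin--Selberg sum $\sum_{\alpha \in F^\times} W_{\varphi_1}(a(\alpha) \cdot)\, W_{\varphi_2}(a(\alpha) \cdot)$, and folding this sum against the diagonal torus yields the standard Rankin--Selberg integral $\int_{N(\mathbb{A}) \backslash \PGL_2(\mathbb{A})} W_{\varphi_1}\, W_{\varphi_2}\, \varphi_3$, which Eulerizes as a product $\prod_{\mathfrak{p}} \Psi_{\mathfrak{p}}$. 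At each good place the factor $\Psi_{\mathfrak{p}}$ is the standard unramified Rankin--Selberg integral, whose value together with \eqref{eqn:P-PGL2-unram-calc} I would use to rewrite $\prod_{\mathfrak{p} \notin S} |\Psi_{\mathfrak{p}}|^2$ as the partial $L$-ratio $\zeta^{(S)}(2)\, L^{(S)}(\pi_1 \otimes \pi_2 \otimes \pi_3, \tfrac{1}{2})$ divided by $L^{(S)}(\ad(\pi), 1)^2\, |L^{(S)}(\chi,1)|^2$, times $\prod_{\mathfrak{p} \notin S} \mathcal{P}_{\PGL_2(F_{\mathfrak{p}})}(\varphi_{1 \mathfrak{p}}, \varphi_{2 \mathfrak{p}}, \varphi_{3 \mathfrak{p}})$; at the bad places one passes from $|\Psi_{\mathfrak{p}}|^2$ to $\mathcal{P}_{\PGL_2(F_{\mathfrak{p}})}$ via the local triple-product identity (the $\PGL_2$ analogue, \cite[Lem 3.4.2]{michel-2009}, of the metaplectic identities of \S\ref{sec:linearize-local-shimura}). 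Taking the squared modulus of the unfolded period and assembling the pieces yields the stated identity, with $c$ collecting the Tamagawa-measure factor, the universal constant in the triple-product identity, and the discrepancy between the self-dual and Whittaker normalizations, all depending only on $F$.

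The step I expect to be the main obstacle is precisely this last bookkeeping of local $L$-factors and measures. Because $\pi_3 = \mathcal{I}_{\PGL_2}(\chi)$ is Eisenstein rather than cuspidal, the denominator cannot carry a literal ``$L^{(S)}(\ad(\pi_3),1)$'' --- that quantity would be $\zeta^{(S)}(1)\, L^{(S)}(\chi^2,1)\, L^{(S)}(\chi^{-2},1)$, which has a pole --- so one cannot simply quote Ichino's formula for a triple of cusp forms; rather, the factor $|L^{(S)}(\chi,1)|^2$ together with the single power of $\zeta^{(S)}(2)$ in the numerator arise from the Rankin--Selberg normalization of the Eisenstein section (equivalently, from the constant term of $\Eis_{\PGL_2}$), and confirming that they emerge with exactly these arguments and exponents is the delicate point. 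A secondary point requiring care is to verify that it is the unnormalized local integrals $\mathcal{P}_{\PGL_2(F_{\mathfrak{p}})}$ of \S\ref{sec-3-12}, rather than the variant normalized to equal $1$ in the unramified case (cf.\ the remark following \eqref{eqn:P-PGL2-unram-calc}), that appear at the bad places, and to maintain this choice consistently throughout.
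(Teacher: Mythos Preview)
The paper does not give its own proof of this lemma; it simply states it as a paraphrase of \cite[\S4.4]{michel-2009} (cf.\ \cite[\S2.2.2]{michel-2009}). Your sketch correctly outlines the content of that reference: unfold the Eisenstein series, extract the Rankin--Selberg integral via the Whittaker expansion of the cuspidal factors, Eulerize, invoke \cite[Lem 3.4.2]{michel-2009} to pass from $|\ell_{\mathfrak{p}}|^2$ to $\mathcal{P}_{\PGL_2(F_{\mathfrak{p}})}$, and collect normalization constants. You have also correctly flagged the genuine subtleties --- that one cannot blindly insert an $L^{(S)}(\ad(\pi_3),1)$ when $\pi_3$ is Eisenstein, and that the unnormalized $\mathcal{P}_{\PGL_2(F_{\mathfrak{p}})}$ of \S\ref{sec-3-12} is what appears at places in $S$.

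One small point of confusion in your write-up: in the sentence ``rewrite $\prod_{\mathfrak{p} \notin S} |\Psi_{\mathfrak{p}}|^2$ as the partial $L$-ratio \dots\ times $\prod_{\mathfrak{p} \notin S} \mathcal{P}_{\PGL_2(F_{\mathfrak{p}})}(\dotsb)$,'' the second product over $\mathfrak{p} \notin S$ is either redundant (since at good places $\mathcal{P}_{\PGL_2(F_{\mathfrak{p}})}$ \emph{is} the local $L$-ratio by \eqref{eqn:P-PGL2-unram-calc}) or a slip for $\mathfrak{p} \in S$. The cleaner way to organize the bookkeeping is: apply \cite[Lem 3.4.2]{michel-2009} at \emph{all} places to get $|\Psi_{\mathfrak{p}}|^2 = c_{\mathfrak{p}}\, \mathcal{P}_{\PGL_2(F_{\mathfrak{p}})}$ with $c_{\mathfrak{p}}$ depending only on measures; then the global $L$-ratio arises not from $\prod_{\mathfrak{p} \notin S} |\Psi_{\mathfrak{p}}|^2$ alone but from the conversion between the Kirillov-normalized local unitary structures (implicit in $\Psi_{\mathfrak{p}}$) and the $L^2([\PGL_2])$-compatible ones (implicit in the statement), which introduces the factors $L^{(S)}(\ad(\pi),1)$ and $\zeta^{(S)}(2)$. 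This is exactly the content of \cite[\S4.4]{michel-2009}, so your identification of the reference and the shape of the argument is on target.
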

We note also that $L(\pi_1 \otimes \pi_2 \otimes \pi_3, s) = L(\pi \otimes \overline{\pi} \otimes \chi, s) L(\pi \otimes \overline{\pi} \otimes \chi^{-1}, s)$.
\subsubsection{$\SL_2$ vs. $\PGL_2$}
\label{sec-4-5-4}
If $\rtchi$ is a Hecke
character with $\rtchi^2 = \chi$ and $f$ is an element of
$\mathcal{I}_{\SL_2}(\chi)$, denote by $f^\rtchi$ its unique
extension to $\mathcal{I}_{\PGL_2}(\rtchi)$, as in \S\ref{sec:induced-rep-local-from-sl2-to-pgl2}.

\subsubsection{Lifting Rankin--Selberg periods on $\SL_2$ to $\PGL_2$}
\label{sec-4-5-5}
\label{sec:lift-sl2-periods-to-pgl2}
We shall encounter Rankin--Selberg integrals
on $[\SL_2]$ involving restrictions of automorphic forms on $[\GL_2]$.
In order
to relate such integrals
via \S\ref{sec:rs-period-formula-pgl2} to products of $L$-values and local integrals,
we must first lift them to $[\PGL_2]$:

\begin{lemma*}
  Let $\pi$ be a cuspidal automorphic
  representations of $\GL_2(\mathbb{A})$.
  Let $\chi$ be a Hecke character
  with $\Re(\chi) > -1$ and $\chi \neq |.|^1$.
  Let $\varphi \in \pi, \varphi' \in \overline{\pi}$
  and $f \in \mathcal{I}_{\SL_2}(\chi)$.
  Then
  \begin{equation}\label{eqn:rs-from-sl2-to-pgl2}
    \int_{[\SL_2]} \varphi \varphi' \Eis_{\SL_2}(f)
    =
    c
    \sum_
    {
      \substack{
        \rtchi \in  \mathfrak{X} :
        \rtchi^2 = \chi 
      }
    }
    \int_{[\PGL_2]} \varphi \varphi' \Eis_{\PGL_2}(f^\rtchi)
  \end{equation}
  for some $c > 0$ depending only upon $F$.
\end{lemma*}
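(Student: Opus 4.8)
The plan is to unfold the Eisenstein series on both sides, reduce everything to the common Borel quotient, and match by combining the identification of $\mathcal{I}_{\SL_2}(\chi)$ as a restriction from $\PGL_2$ with orthogonality of quadratic Hecke characters. By meromorphic continuation in $\chi$ (say along $\chi = \chi_0|\cdot|^s$ with $\chi_0$ fixed and $s$ varying), and since the stated hypotheses on $\chi$ keep us away from the pole of $\Eis_{\SL_2}$ and inside the range where the extension $f \mapsto f^{\rtchi}$ of \S\ref{sec:induced-rep-local-from-sl2-to-pgl2} is available, it suffices to prove the identity for $\Re(\chi)$ large, where all integrals below converge absolutely. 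One first observes that, since $\varphi$, $\varphi'$, $f$ are fixed and hence invariant by an open compact subgroup $U$ at the finite places, the sum over $\rtchi$ on the right has only finitely many nonzero terms: a change of variables $g \mapsto gu$ in $\int_{[\PGL_2]}\widehat\eta\cdot\varphi\varphi'\Eis_{\PGL_2}(f^{\rtchi_0})$ (notation below) shows this integral vanishes unless the quadratic character $\widehat\eta$ is trivial on $U$, i.e. unless $\eta$ is unramified outside a fixed finite set — and there are only finitely many such $\eta$. So the right-hand sum makes sense and no interchange-of-limits issue will arise.

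First I would fix a Hecke character $\rtchi_0$ with $\rtchi_0^2 = \chi$ (if none exists the right side is empty, and the left side then vanishes too, as one sees from the partial unfolding since $\mathcal{I}_{\SL_2}(\chi)$ does not descend from $\PGL_2$). Every $\rtchi$ with $\rtchi^2=\chi$ is $\rtchi_0\eta$ with $\eta$ quadratic, and the defining formula of $f^{\rtchi}$ gives $f^{\rtchi_0\eta} = \widehat\eta\cdot f^{\rtchi_0}$, where $\widehat\eta$ is the automorphic character $g \mapsto \eta(\disc g)$ of $[\PGL_2]$ pulled back along the discriminant from $\mathbb{A}^\times/F^\times\mathbb{A}^{\times 2}$. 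Since $\widehat\eta$ is left $\PGL_2(F)$-invariant, $\Eis_{\PGL_2}(f^{\rtchi_0\eta}) = \widehat\eta\,\Eis_{\PGL_2}(f^{\rtchi_0})$, so the right-hand side equals $c\int_{[\PGL_2]}\bigl(\sum_{\eta^2=1}\widehat\eta\bigr)\varphi\varphi'\Eis_{\PGL_2}(f^{\rtchi_0})$, a finite sum.

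Next I would use the structure of $j\colon\SL_2\to\PGL_2$. The discriminant realizes $[\PGL_2]$ as fibered over the compact group $\mathcal{D}:=\mathbb{A}^\times/F^\times\mathbb{A}^{\times 2}$, the fibre over the trivial class being exactly the image $j([\SL_2])$; on that fibre $\Eis_{\PGL_2}(f^{\rtchi_0})$ restricts to $\Eis_{\SL_2}(f)$, because $f^{\rtchi_0}\circ j = f$ and the Bruhat-cell representatives for $\PGL_2(F)$ may be taken inside $j(\SL_2(F))$. Disintegrating the Tamagawa integral over $[\PGL_2]$ along $\disc$ — an iterated integral over $\mathcal{D}$ (with its Haar measure) and over the fibres (each carrying the conditional measure, which on the trivial fibre is the pushforward of the Tamagawa measure of $[\SL_2]$) — and using that $\sum_{\eta^2=1}\widehat\eta = \sum_{\overline\eta\in\widehat{\mathcal D}}\overline\eta\circ\disc$ acts, by orthogonality of characters of $\mathcal D$, as a constant times evaluation at the trivial class, the $\mathcal D$-integral collapses onto the trivial fibre. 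This gives $c\int_{[\PGL_2]}(\sum_\eta\widehat\eta)\varphi\varphi'\Eis_{\PGL_2}(f^{\rtchi_0}) = c'\int_{[\SL_2]}\varphi\varphi'\Eis_{\SL_2}(f)$ for a positive constant $c'$ depending only on $F$, which is the asserted identity.

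The main obstacle is the bookkeeping in this last step: $j\colon[\SL_2]\to[\PGL_2]$ is not injective (its fibres are orbits of the compact group $\mu_2(\mathbb{A})/\mu_2(F)$), $\disc$ descends to $[\PGL_2]$ only after passing to the quotient $\mathcal D$, and $\mathcal D$ is infinite, so the trivial fibre has measure zero in $[\PGL_2]$ and "integration over it" must be interpreted through the disintegration of Tamagawa measure. Working with $\Re(\chi)$ large (absolute convergence, Fubini) and tracking the — harmless, because compact — kernel and cokernel of $j$ on adelic points is what makes this rigorous and pins down the constant. An alternative that bypasses the measure disintegration is to unfold both sides all the way down to Rankin--Selberg integrals and match the resulting products of local zeta integrals and completed $L$-functions, as in \S\ref{sec:rs-period-formula-pgl2}, trading the global bookkeeping for an explicit and standard local computation.
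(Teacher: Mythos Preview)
Your argument is correct, and the underlying mechanism---orthogonality of quadratic Hecke characters---is the same as the paper's, but the execution differs. The paper unfolds both Eisenstein series via the Iwasawa decomposition, writing each side as $\int_K f(k)\,(\text{Mellin integral in }y)$ against the constant term $\Phi$ of $\varphi\varphi'$, and then reduces everything to the elementary identity
\[
  \int_{\mathbb{A}^\times/F^\times}\phi(y^2)\chi(y)
  = \tfrac{1}{2}\sum_{\rtchi^2=\chi}\int_{\mathbb{A}^\times/F^\times}\phi(y)\rtchi(y)
\]
on the idele class group (your ``alternative'' at the end is close to this). You instead keep the integral on $[\PGL_2]$, fibre it over $\mathcal{D}=\mathbb{A}^\times/F^\times\mathbb{A}^{\times2}$ via the discriminant, and use Fourier inversion on $\mathcal{D}$ to collapse onto the $\SL_2$-fibre. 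Your route is more structural and avoids unfolding, at the cost of the measure-disintegration bookkeeping you flag; the paper's route is more concrete and sidesteps that bookkeeping entirely, since after unfolding the identity takes place on the torus rather than on $[\PGL_2]$. One small point: your treatment of the case where $\chi$ has no square root (``one sees from the partial unfolding\dots'') is not quite an argument; it follows cleanly from the paper's displayed identity above with empty right-hand side, or equivalently from the fact that $\chi$ is then nontrivial on the $2$-torsion of $\mathbb{A}^\times/F^\times$.
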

\begin{proof}
  Denote temporarily by $K$ the standard maximal compact subgroup
  of $\SL_2(\mathbb{A})$,
  by $\overline{K}$ its image in $\PGL_2(\mathbb{A})$,
  by $N$ the standard unipotent subgroup of $\SL_2(\mathbb{A})$,
  by $T$ the standard diagonal torus of $\SL_2(\mathbb{A})$,
  and by $A$ the standard diagonal torus of
  $\PGL_2(\mathbb{A})$.
  We will exploit below the decompositions $\PGL_2(\mathbb{A}) = N A \overline{K}$
  and $\SL_2(\mathbb{A}) = N T K$.

  Denote by $\Phi(g) := \int_{x \in \mathbb{A}/F} \varphi
  \varphi '(n(x) g)$ the
  constant term of $\varphi \varphi'$.
  Both sides of \eqref{eqn:rs-from-sl2-to-pgl2}
  vary holomorphically with respect to $\chi$
  as $f$ varies in a flat section,
  so we may reduce to the case that $\Re(\chi)$ is sufficiently large.
  By the inclusion of the factor $c$, 
  the choice of Haar measures is irrelevant;
  choosing them suitably and unfolding,
  we obtain
  \begin{align*}
  \int_{[\SL_2]} \varphi \varphi' \Eis_{\SL_2}(f)
  &=
  \int_{k \in K}
  \int_{y \in \mathbb{A}^\times / F^\times}
  f(t(y) k)
  \Phi(t(y) k) |y|^{-2} \\
  &=
  \int_{k \in K}
  f(k)
  \int_{y \in \mathbb{A}^\times / F^\times}
  \Phi(a(y^2) k) |y|^{-1} \chi(y)
  \end{align*}
  and
  \begin{align*}
  \int_{[\PGL_2]} \varphi \varphi' \Eis_{\PGL_2}(f^\rtchi)
  &= 
  \int_{k \in K}
  \int_{y \in \mathbb{A}^\times / F^\times}
  f^\rtchi(a(y) k)
  \Phi(a(y) k) |y|^{-1} \\
  &=
  \int_{k \in K}
  f(k)
  \int_{y \in \mathbb{A}^\times / F^\times}
  \Phi(a(y) k) |y|^{-1/2} \rtchi(y).
  \end{align*}
  By approximating $\Phi$ by test functions,
  we reduce to verifying for each
  test function $\phi$ on $\mathbb{A}^\times / F^\times$
  and each Hecke character $\chi$
  that
  \begin{equation}\label{eqn:mellin-transform-of-restriction-to-squares}
  \int_{y \in \mathbb{A}^\times / F^\times}
  \phi(y^2) \chi(y)
  =
  \frac{1}{2}
  \sum_{\rtchi  \in \mathfrak{X} :
    \rtchi^2 = \chi}
  \int_{y \in \mathbb{A}^\times / F^\times}
  \phi(y) \rtchi(y).
  \end{equation}
  The proof of \eqref{eqn:mellin-transform-of-restriction-to-squares} is an exercise in Pontryagin duality
  and quotient measures, and left to the reader;
  it is similar to the identity $\int_{y \in \mathbb{R}_+^\times} \phi(y ^2) |y|^s \, d^\times y
  = \frac{1}{2} \int_{y \in \mathbb{R}_+^\times} \phi(y) |y|^{s/2} \, d^\times
  y$ satisfied
  by test functions $\phi$ on $\mathbb{R}^\times_+
  \cong \mathbb{R}^\times / \{\pm 1\}$ and complex numbers $s$. \end{proof}

\subsubsection*{Remark } The sum indexed by $\rtchi$ in \eqref{eqn:rs-from-sl2-to-pgl2}
 is finite
in the sense that the summand vanishes for $\rtchi$ outside some
finite set depending at most upon $\varphi, \varphi'$.
Indeed, the $[\PGL_2]$-integral vanishes unless $\rtchi$ is unramified
at all good places $\mathfrak{p}$ for $F$ at which $\varphi, \varphi'$ are unramified.
\subsection{Regularized spectral expansions of products of elementary theta functions\label{sec:reg-spectral-exp-theta}}
\label{sec-4-6}
Let $\chi$ be  a unitary Hecke character.
Denote by
$I_\chi  : \rho_{\psi} \otimes
\rho_{\overline{\psi }} \rightarrow
\mathcal{I}_{\SL_2}(\chi)$
the $\Mp_2(\mathbb{A})$-equivariant
intertwiner
defined and studied in \cite[\S5.8]{nelson-theta-squared};
it is given on pure tensors $\phi = \otimes \phi_{\mathfrak{p}}$
by $I_\chi(\phi) = L^{(S)}(\chi,1)
\cdot (\otimes I_{\chi_\mathfrak{p}}(\phi_\mathfrak{p}))$,
where $S$ is any finite set of places
containing the bad places and any at which $\phi$ is ramified,
and
$I_{\chi_\mathfrak{p}}$ is as in \S\ref{sec:local-I-chi}.
We record a special case of
\cite[Thm 2]{nelson-theta-squared}:
\begin{lemma*}
  Let $\Phi : [\SL_2] \rightarrow \mathbb{C}$
  be smooth and of rapid decay.
  Let $\phi_1, \phi_2 \in \rho_{\psi}$.
  Then
  \begin{equation}\label{eqn:theta-squared-regularized-expansion}
    \begin{split}
      \int_{[\SL_2]}
      \Phi 
      \theta(\phi_1) \overline{\theta(\phi_2)}
      &= 
      \int_{[\SL_2]}
      \Phi 
      \int_{[\SL_2]}
      \theta(\phi_1) \overline{\theta(\phi_2)}
      \\
      &\quad
      +
      \int_{\chi \in \mathfrak{X}(0)}
      \int_{[\SL_2]} \Phi \Eis_{\SL_2}(I_\chi(\phi_1 \otimes
      \overline{\phi_2})).
    \end{split}
  \end{equation}
  Moreover,
  if $\phi_i(-x) = \phi_i(x)$ for all $x \in \mathbb{A}$, then
  \begin{equation}\label{eqn:norm-of-theta}
    \int_{[\SL_2]}
    \theta(\phi_1) \overline{\theta(\phi_2)} = 2 \int_{\mathbb{A}}
    \phi_1 \overline{\phi_2}.
  \end{equation}
\end{lemma*}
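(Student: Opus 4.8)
The plan is to read off both assertions from the general regularized spectral expansion of \cite[Thm 2]{nelson-theta-squared}, the present lemma being just the specialization in which the continuous-spectrum contribution is made explicit through the intertwiner $I_\chi$ recalled at the start of this subsection. The function $\theta(\phi_1)\overline{\theta(\phi_2)}$ is a product of genuine automorphic forms, hence descends to a smooth, $\SL_2(F)$-invariant function on $[\SL_2]$ of moderate growth, and the strategy is to decompose it into a constant part, a cuspidal part, and a continuous part. Three facts drive this: (a) the cuspidal part vanishes --- $\theta(\phi_1)\overline{\theta(\phi_2)}$ is orthogonal to cusp forms on $[\SL_2]$, the key input from \cite{nelson-theta-squared}; (b) its continuous part is determined by its constant term along the standard Borel and, once the relevant partial Fourier transform is identified with $I_\chi$ (see \cite[\S4.12, \S5.8]{nelson-theta-squared}), works out to the integral over $\chi \in \mathfrak{X}(0)$ of $\Eis_{\SL_2}(I_\chi(\phi_1\otimes\overline{\phi_2}))$; and (c) since $\vol([\SL_2]) = 1$, the coefficient of the constant function is precisely the regularized integral $\int_{[\SL_2]} \theta(\phi_1)\overline{\theta(\phi_2)}$, evaluated in the ``moreover'' clause. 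Pairing the resulting identity against the rapidly decreasing $\Phi$ --- which renders every term absolutely convergent --- then yields \eqref{eqn:theta-squared-regularized-expansion}.

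The hard part, and the real content of \cite[Thm 2]{nelson-theta-squared}, is the regularization. The function $\theta(\phi_1)\overline{\theta(\phi_2)}$ is \emph{not} square-integrable: the non-decaying part of its constant term along $N$ is $g \mapsto (\rho_{\psi}(g)\phi_1)(0)\,\overline{(\rho_{\psi}(g)\phi_2)(0)}$, of size $\asymp \height(g)$ when $\phi_1(0)\overline{\phi_2(0)} \neq 0$, so the naive $L^2$-spectral decomposition does not apply and the ``inner products'' above diverge. I would handle this in the standard way: subtract off the explicit degenerate Eisenstein series that matches the growth of the constant term, apply the Langlands decomposition to the rapidly decreasing remainder, restore the correction, and regularize the surviving divergent pieces by a height convergence factor $|y|^{s}$ followed by meromorphic continuation to $s = 0$ (equivalently, by Arthur-style truncation). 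The continuous part then reassembles into the stated $\chi$-integral, whose convergence after pairing with $\Phi$ is guaranteed by the rapid decay of $I_\chi(\phi_1\otimes\overline{\phi_2})$ in the analytic conductor of $\chi$.

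For the ``moreover'' clause one is given that $\phi_1$ and $\phi_2$ are even and must show $\int_{[\SL_2]}\theta(\phi_1)\overline{\theta(\phi_2)} = 2\int_{\mathbb{A}}\phi_1\overline{\phi_2}$. This is the simplest instance of the Siegel--Weil / Rallis inner product formula, for the dual pair $(\Mp_2, \mathrm{O}_1)$, the group $\mathrm{O}_1 = \mu_2$ supplying the Tamagawa factor $2$; equivalently, it follows from a (regularized) unfolding: the $\psi$-th Fourier coefficient of $\theta(\phi_i)$ along $N$ is $(\rho_{\psi}(g)\phi_i)(1) + (\rho_{\psi}(g)\phi_i)(-1) = 2(\rho_{\psi}(g)\phi_i)(1)$ by evenness, and feeding this into the unfolded inner product --- the degenerate Fourier mode being exactly what the regularization discards --- produces the identity with its factor $2$. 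I expect the bookkeeping around the regularization, namely matching growth rates, justifying interchanges of summation and integration, and controlling the continuation in $s$, to be the most delicate point throughout; it is carried out in \cite{nelson-theta-squared}, on which this lemma rests.
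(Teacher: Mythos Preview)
Your proposal is correct and aligned with the paper's treatment: the paper does not give an independent proof of this lemma but simply records it as a special case of \cite[Thm~2]{nelson-theta-squared}, which is exactly what you do. Your sketch of the underlying mechanism (orthogonality to cusp forms, regularization via subtraction of the degenerate Eisenstein series, and the Siegel--Weil/unfolding interpretation of \eqref{eqn:norm-of-theta}) goes beyond what the paper itself provides and accurately reflects the content of the cited reference.
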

As noted in \S\ref{sec-1}, the absence of a cuspidal contribution
on the RHS of \eqref{eqn:theta-squared-regularized-expansion} is
critical to our argument.

We note that the integrand in \eqref{eqn:theta-squared-regularized-expansion}
is holomorphic in $\chi$:
the simple pole of $\chi \mapsto I_\chi$
at the trivial character
is cancelled by the corresponding simple
zero of the Eisenstein intertwiner.
We note also that the second assertion \eqref{eqn:norm-of-theta} may be
applied to general $\phi_i \in \rho_{\psi}$ by first
replacing them with their even projections
$\phi_i^+(x) := (\phi_i(x) + \phi_i(-x))/2$.

\subsection{Global Waldspurger packets\label{sec:global-waldspurger-packets}}
\label{sec-4-7}
Let $\tau$ be a cuspidal automorphic representation
of $\PGL_2(\mathbb{A})$.
Given a collection of elements $\sigma_\mathfrak{p}^{\eps_\mathfrak{p}}$
of the local Waldspurger packets
$\Wd_{\psi_\mathfrak{p}}(\tau_\mathfrak{p})$
indexed by some signs
$\eps_\mathfrak{p} = \pm 1$ as in \S\ref{sec:local-waldspurger-packets} (necessarily
$\eps_{\mathfrak{p}} = + 1$ for almost all $\mathfrak{p}$),
one may form
the restricted tensor products
$\sigma^\eps = \otimes \sigma_\mathfrak{p}^{\eps_\mathfrak{p}}$.
Waldspurger showed that
$\sigma^\eps$ is automorphic
if and only if $\prod \eps_\mathfrak{p} = \eps(\tau,\tfrac{1}{2})$;
moreover, $\sigma^{\eps}$ is then cuspidal
and occurs
with multiplicity one in the space of automorphic forms
on $\SL_2(F) \backslash \Mp_2(\mathbb{A})$.
It is thus meaningful to regard $\Wd_{\psi}(\tau)
 := \{ \sigma^\eps : \prod \eps_\mathfrak{p} = \eps(\tau,\tfrac{1}{2})\}$
as a finite collection of genuine cuspidal automorphic
representations of $\Mp_2(\mathbb{A})$.

Suppose now that
$\tau_{\mathfrak{p}}$ is principal series for all places $\mathfrak{p}$.
Then each local Waldspurger packet is a singleton
$\Wd_{\psi_{\mathfrak{p}}}(\tau_\mathfrak{p})
= \{\sigma_{\mathfrak{p}}^+\}$
and $\eps(\tau,\tfrac{1}{2}) = 1$,
so the global Waldspurger packet is a singleton
$\Wd_{\psi}(\tau) = \{\sigma\}$
consisting of a cuspidal automorphic representation 
$\sigma$ of
$\Mp_2(\mathbb{A})$.

\subsection{Generalized Shimura integrals\label{sec:global-shimura-integral}}
\label{sec-4-8}
Let $\tau$ be a cuspidal automorphic representation of $\PGL_2(\mathbb{A})$.
Let $\sigma \in \Wd_\psi(\tau)$.
Let $\pi$ be a cuspidal automorphic representation of
$\GL_2(\mathbb{A})$
with unitary central character $\omega_\pi$.
Let $\rho_{\psi}$ be the Weil representation of
$\Mp_2(\mathbb{A})$
on $\mathcal{S}(\mathbb{A})$.  Consider the following
$\Mp_2(\mathbb{A})$-invariant hermitian form $\mathcal{P}_{\SL_2(F)}$
on $\pi \otimes \rho_{\psi} \otimes \sigma$:
for $\varphi_1 \in \pi, \varphi_2 \in \rho_{\psi}, \varphi_3 \in \sigma$,
\[
\mathcal{P}_{\SL_2(F)}(\varphi_1, \varphi_2, \varphi_3)
=
|
\int_{[\SL_2]}
\varphi_1
\cdot \theta(\varphi_2)
\cdot
\overline{\varphi_3}
|^2.
\]
We recall \cite[Thm 4.5]{MR3291638}:
\begin{lemma*}
Assume that the $\varphi_i$ are pure tensors $\otimes_\mathfrak{p}
\varphi_{i \mathfrak{p}}$.
Let $S$ be
a finite set of places, containing the bad ones,
with the property that $\varphi_{i \mathfrak{p}}$ is an unramified
unit vector for all $i$ and each $\mathfrak{p} \notin S$.
Set $\zeta_F^{(S)}(s) := \prod_{\mathfrak{p} \notin S}
\zeta_{F_\mathfrak{p}}(s)$.
Then
  \[
  \mathcal{P}_{\SL_2(F)}(\varphi_1,\varphi_2,\varphi_3)
  =
  \frac{1}{2^2}
  \frac{
    \zeta_F^{(S)}(2)     L^{(S)}(\ad(\pi) \otimes \tau, \tfrac{1}{2})
  }
  {
    L^{(S)}(\ad(\tau), 1) L^{(S)}(\ad(\pi), 1)
  }
  \prod_{\mathfrak{p}  \in S}
  \mathcal{P}_{\SL_2(F_\mathfrak{p})}(\varphi_{1 \mathfrak{p} }, \varphi_{2 \mathfrak{p} }, \varphi_{3 \mathfrak{p} }).
  \]
\end{lemma*}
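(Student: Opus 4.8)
The plan is to prove this identity by the standard two-step strategy for Euler-factorizing automorphic periods, here applied in the metaplectic Shimura--Waldspurger setting; one may regard the statement as the analogue for $\int_{[\SL_2]}\varphi_1\theta(\varphi_2)\overline{\varphi_3}$ of Ichino's triple product formula on $\PGL_2$, and it is due to Qiu \cite[Thm 4.5]{MR3291638}, whose argument I outline. First I would reduce the assertion to two ingredients: (a) an abstract factorization expressing the global invariant hermitian form $\mathcal{P}_{\SL_2(F)}$ as a scalar multiple of a canonically normalized Euler product of the local integrals $\mathcal{P}_{\SL_2(F_\mathfrak{p})}$ of \S\ref{sec:triple-integrals-matrix-coefficients-mp2}; and (b) the evaluation of that scalar together with the partial $L$-factors, via a Rankin--Selberg unfolding and the unramified local computation \eqref{eqn:unram-trip-prod-metaplectic}.

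For step (a), the key local input is a multiplicity-one statement: the space of $\SL_2(F_\mathfrak{p})$-invariant hermitian forms on $\pi_\mathfrak{p}\otimes\rho_{\psi_\mathfrak{p}}\otimes\sigma_\mathfrak{p}$ is at most one-dimensional, spanned by $\mathcal{P}_{\SL_2(F_\mathfrak{p})}$ when the latter is nonzero; this is part of the local theory underlying \eqref{eqn:local-nonvanishing-of-sl2-periods} (Waldspurger, Gan--Gross--Prasad for $\Mp_2$, and the Waldspurger-packet bookkeeping of \S\ref{sec:local-waldspurger-packets}). Granting it, the $\Mp_2(\mathbb{A})$-invariant hermitian form $\varphi\mapsto|\int_{[\SL_2]}\varphi_1\theta(\varphi_2)\overline{\varphi_3}|^2$ must agree, up to a scalar independent of the $\varphi_i$, with $\prod_\mathfrak{p}\mathcal{P}_{\SL_2(F_\mathfrak{p})}$; the infinite product makes sense because, by \eqref{eqn:unram-trip-prod-metaplectic}, the good-place factors equal $\zeta_{F_\mathfrak{p}}(2)L(\ad(\pi_\mathfrak{p})\otimes\tau_\mathfrak{p},\tfrac12)/(L(\ad(\tau_\mathfrak{p}),1)L(\ad(\pi_\mathfrak{p}),1))$, whose product over $\mathfrak{p}$ converges by $\vartheta$-temperedness. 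Collecting those good-place factors into the partial $L$-functions $\zeta_F^{(S)}(2)L^{(S)}(\ad(\pi)\otimes\tau,\tfrac12)/(L^{(S)}(\ad(\tau),1)L^{(S)}(\ad(\pi),1))$ and keeping the bad-place factors as $\mathcal{P}_{\SL_2(F_\mathfrak{p})}$ then produces exactly the shape of the claimed formula, with an as-yet-undetermined global constant.

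For step (b) I would run the Rankin--Selberg unfolding of Shimura and Gelbart--Jacquet: realize $\pi$ in its $\overline{\psi}$-Whittaker model and $\sigma$ in a Whittaker model, insert the series $\theta(\varphi_2)(g)=\sum_{\alpha\in F}(\rho_{\psi}(g)\varphi_2)(\alpha)$ together with the Fourier expansion of $\varphi_1$, and unfold over $[\SL_2]$; the double sum collapses and one is left with an Eulerian integral over $N(\mathbb{A})\backslash\SL_2(\mathbb{A})$ whose local components are precisely the zeta integrals $\ell$ of \S\ref{sec:linearize-local-shimura}. Identifying the unramified local zeta integral of unit vectors with the $L$-factor ratio above (Gelbart--Jacquet), then taking absolute values squared and using the local identities \eqref{eq:linearized-local-shimura-period}, \eqref{eq:linearized-local-shimura-period-nontempered} to rewrite $|\ell_\mathfrak{p}|^2$ as $\mathcal{P}_{\SL_2(F_\mathfrak{p})}$ at the remaining places, one recovers the stated identity; the constant $1/2^2$ traces to $\vol([\SL_2])=1$ together with the factor of $2$ in the theta norm formula \eqref{eqn:norm-of-theta}, squared. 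Alternatively one can reach the same formula through the Shimizu/theta correspondence, trading the metaplectic period for a Gross--Prasad period on an orthogonal group of dimension $3$ and then quoting the triple product formula on quaternion algebras (this is in keeping with the remarks in \S\ref{sec:overview-proof}).

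The hard part is the ramified local analysis: proving the local multiplicity-one statement in the required generality, and verifying that the global period carries no local fudge factors beyond the $\mathcal{P}_{\SL_2(F_\mathfrak{p})}$, i.e.\ that the local and global normalizations of Haar measures, of the unitary structures on $\pi$, $\rho_{\psi}$ and $\sigma$, and of the Weil representation all match. Compounding this are the genuine nature of $\theta(\varphi_2)$ and $\varphi_3$, the need to regularize the intermediary integrals in the unfolding, which are not absolutely convergent (as in \S\ref{sec:linearize-local-shimura}), and the square-class subtleties intrinsic to $\Mp_2$; these are exactly the points handled in \cite{MR3291638}, which is why the present paper simply invokes that reference.
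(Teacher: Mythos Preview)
Your proposal is essentially correct and, in fact, does more than the paper: the paper offers no proof of this lemma whatsoever, simply citing \cite[Thm 4.5]{MR3291638} (Qiu) as a black box. Your sketch of Qiu's argument via multiplicity one plus Rankin--Selberg/Shimura--Gelbart--Jacquet unfolding is the right outline, and you correctly flag at the end that the paper just invokes the reference.

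One minor wobble: in step (a) you claim the infinite product $\prod_{\mathfrak{p}} \mathcal{P}_{\SL_2(F_\mathfrak{p})}$ of unramified values ``converges by $\vartheta$-temperedness,'' but the Euler product for $L(\ad(\pi)\otimes\tau,s)$ does not converge at $s=\tfrac{1}{2}$; one must instead work with the \emph{normalized} local periods (dividing by the unramified value so that almost all factors equal $1$) and recover the partial $L$-value via analytic continuation of the global zeta integral. This is standard bookkeeping and does not affect the structure of your argument.
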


\subsection{Bounds for $\SL_2$-matrix coefficients of automorphic representations of $\GL_2$}
\label{sec-4-9}
\label{sec:global-pgl2-to-sl2-matrix-coeff}
Let $\pi$ be a cuspidal automorphic representation of
$\GL_2(\mathbb{A})$.
The standard unitary structure on $\pi$ is given
for $\varphi, \varphi ' \in \pi$ by
$\langle \varphi', \varphi  \rangle
:=
\int_{[\PGL_2]} \varphi' \overline{\varphi}$.
In general, this differs from the modified pairing
$\langle \varphi', \varphi \rangle_{\SL_2}
:=
\int_{[\SL_2]} \varphi' \overline{\varphi }$
given by integrating over $[\SL_2] \subseteq [\GL_2]$.
The two are related by Fourier inversion:
taking into account that $\vol([\PGL_2]) = 2\vol([\SL_2])$,
we have
\begin{equation}\label{eq:decompose-global-sl2-matrix-coeff-via-pgl2}
  \langle \varphi', \varphi \rangle_{\SL_2}
=
\frac{1}{2}  \sum_{\chi  \in \mathfrak{X} : \chi^2 = 1}
\langle \varphi' , \varphi \otimes \chi \rangle
\end{equation}
where $\varphi \otimes \chi \in \pi \otimes \chi$ denotes the
automorphic form given by
$(\varphi \otimes \chi)(g) := \varphi(g) \chi(\det g)$.  The sum
on the RHS of
\eqref{eq:decompose-global-sl2-matrix-coeff-via-pgl2} may be
restricted to those $\chi$ for which
$\pi \otimes \chi \cong \pi$.  By the local estimate of
\S\ref{sec:local-twisting-bounds}
and axiom (S1d) of \cite[\S2.4]{michel-2009}, one has for each fixed $d$ the estimate
$\mathcal{S}_d(\varphi \otimes \chi) \ll C(\chi)^{O(1)}
\mathcal{S}(\varphi)$.
By the local bound for matrix coefficients given in
\S\ref{sec:bounds-mx-coefs}
and axiom (S1d) of \cite[\S2.4]{michel-2009},
we obtain for $g \in \GL_2(\mathbb{A})$ the crude but sufficient bound
\begin{equation}\label{eq:global-sl2-matrix-coeff-bound}
\langle \pi(g) \varphi ', \varphi  \rangle
\ll
\|\Ad(g)\|^{\vartheta' - 1/2}
\mathcal{S}(\varphi) \mathcal{S}(\varphi ')
\sum_{\chi \in \mathfrak{X} : \chi^2 = 1, \pi \otimes \chi \cong
  \pi}
C(\chi)^{O(1)}
\end{equation}

\section{Main result}
\label{sec-5}
\subsection{Statement\label{sec:stmt-main-result-booya}}
\label{sec-5-1}
\subsubsection{Inputs}
\label{sec-5-1-1}
Fix a number field $F$ and let $\mathbb{A}$,
$\psi : \mathbb{A}/F \rightarrow \mathbb{C}^{(1)}$
be as in \S\ref{sec:global-prelims}.
We assume given an infinite countable collection
$\mathcal{F}$ consisting of pairs $(\mathfrak{q}, \pi)$,
where
\begin{itemize}
\item $\mathfrak{q}$ is a finite prime of $F$, and
\item $\pi$ is a cuspidal automorphic representation of
  $\GL_2(\mathbb{A})$ whose local component
  $\pi_{\mathfrak{q}}$ satisfies
  the hypotheses of \S\ref{sec:interesting-local-estimates}.
\end{itemize}
We assume also that
\begin{equation}\label{eqn:p-tends-to-infinity-duh}
 \# \{ (\mathfrak{q},\pi) \in \mathcal{F}
: \norm(\mathfrak{q}) \leq X\} < \infty
\text{ for each $X> 0$.}
\end{equation}
\subsubsection{Asymptotic notation and terminology}
\label{sec-5-1-2}
We denote in what follows by $(\mathfrak{q},\pi)$
a varying element of $\mathcal{F}$.
Our convention is that all objects (scalars, places, representations, vectors, ...)
considered below
are allowed to depend implicitly upon $(\mathfrak{q}, \pi)$
unless they are explicitly designated as \emph{fixed},
in which case we require that they depend at most upon the number field $F$, the family
$\mathcal{F}$, and any aforementioned fixed quantities.  
An assertion $\alpha$ 
depending upon $(\mathfrak{q},\pi)$
will be said to hold \emph{eventually}
if there is a fixed finite subset $\mathcal{F}_0 \subseteq \mathcal{F}$
so that $\alpha$ holds
whenever $(\mathfrak{q},\pi) \notin \mathcal{F}_0$.
The standard asymptotic notation is then defined
accordingly.
For example, given complex scalar quantities $X, Y$ (possibly depending implicitly upon
the pair $(\mathfrak{q},\pi)$, per our convention), we write
\begin{itemize}
\item  $X = O(Y)$, $X \ll Y$ or $Y \gg X$
  to denote that there is a fixed $c > 0$
so that $|X| \leq c |Y|$, and
\item $X = o(Y)$ to denote that for each fixed $\eps > 0$,
  one has $|X| \leq \eps |Y|$ eventually.
\end{itemize}
Set $Q= \norm(\mathfrak{q})$.
Our assumption \eqref{eqn:p-tends-to-infinity-duh}
says that $Q$ eventually exceeds any fixed positive real.

\subsubsection{Assumptions}
\label{sec-5-1-3}
Our results are conditional on the following hypothesis:
\begin{hypothesisHF}
  There is a fixed $\delta_0 > 0$ so that for each unitary
  character $\chi$ of $\mathbb{A}^\times / F^\times$,
  \[
  L(\pi \otimes \overline{\pi} \otimes \chi, \tfrac{1}{2})
  \ll
  C(\pi \otimes \overline{\pi} \otimes \chi)^{1/4-\delta_0}
  \cdot
  \left(
    C(\chi)
    \cdot \prod_{\mathfrak{p}: \mathfrak{p}  \neq \mathfrak{q}} C(\pi_{\mathfrak{p}})
  \right)^{O(1)}.
  \]
  In other words,
  we assume a subconvex bound in the $\pi_{\mathfrak{q}}$-aspect
  with polynomial dependence upon $\chi$
  and the ramification of $\pi$ at places other than
  $\mathfrak{q}$.
\end{hypothesisHF}
\subsubsection{Main result}
\label{sec-5-1-4}
We state an equivalent form of Theorem \ref{thm:main-theorem-1}:
\begin{theorem}\label{thm:reformulated-to-be-precise}
  Let $\mathcal{F}$ be a family as above
  that satisfies Hypothesis $H(\mathcal{F})$.
  Let $(\mathfrak{q}, \pi) \in \mathcal{F}$, as above.
  There is then a fixed $\delta > 0$ with the following
  property.
  Let
  $\tau$
  be a cuspidal automorphic representation of
  $\PGL_2(\mathbb{A})$ 
  for which either
  \begin{enumerate}
  \item
    each
  local component $\tau_\mathfrak{p}$ belongs to the principal
  series, or
  \item the conclusion of the conjecture of \S\ref{sec-3-15-1}
    holds.
  \end{enumerate}
  Then
  \[
  L(\ad(\pi) \otimes \tau,\tfrac{1}{2})
  \ll
  C(\ad(\pi) \otimes \tau)^{1/4 - \delta} P^{O(1)},
  \]
  where
  $P :=
  C(\tau) \cdot  \prod_{\mathfrak{p} : \mathfrak{p} \neq
    \mathfrak{q}} C(\pi_\mathfrak{p})$.
\end{theorem}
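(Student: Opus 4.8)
The plan is to promote the subconvexity of $L(\ad(\pi)\otimes\tau,\tfrac12)$ to an estimate for a period on $[\SL_2]$ via Qiu's formula, Cauchy--Schwarz, and the regularized spectral expansion of squares of theta functions from \cite{nelson-theta-squared}, and then to dispatch the one surviving (continuous-spectrum) term using Hypothesis $H(\mathcal{F})$.

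\textbf{Step 1: an integral representation.} Using the lemma of \S\ref{sec:interesting-local-estimates} at the prime $\mathfrak{q}$, the lemma of \S\ref{sec:uninteresting-lower-bounds} at the remaining bad places, unramified unit vectors at the good places, and — in case (2) — the conjecture of \S\ref{sec-3-15-1} to select the right member of $\Wd_\psi(\tau)$, I would choose factorizable unit vectors $\varphi_1\in\pi$ (a newvector at $\mathfrak{q}$), $\varphi_2\in\rho_\psi$ (even, essentially fixed), $\varphi_3\in\sigma$, so that Qiu's formula (\S\ref{sec:global-shimura-integral}) — after discarding the bad Euler factors and the values $L(\ad(\pi),1),L(\ad(\tau),1)$, all of size $Q^{o(1)}P^{o(1)}$ by $\vartheta$-temperedness — yields
\[
  L(\ad(\pi)\otimes\tau,\tfrac12)\ \ll\ C(\ad(\pi)\otimes\tau)^{1/4+o(1)}\,P^{O(1)}\ \Bigl|\int_{[\SL_2]}\varphi_1\,\theta(\varphi_2)\,\overline{\varphi_3}\Bigr|^2 .
\]
Here the conductor formulas of \S\ref{sec:interesting-local-estimates} and the lower bound \eqref{eqn:int-low-bnd} at $\mathfrak{q}$ are precisely what absorbs the factor $C(\ad(\pi)\otimes\tau)^{1/4}$.

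\textbf{Step 2: Cauchy--Schwarz and the theta-squared expansion.} Apply Cauchy--Schwarz to $\varphi_3$ (this is the step that effaces $\tau$, embedding it into an implicit family): since $\|\varphi_3\|=1$,
\[
  \Bigl|\int_{[\SL_2]}\varphi_1\,\theta(\varphi_2)\,\overline{\varphi_3}\Bigr|^2\ \le\ \int_{[\SL_2]}|\varphi_1|^2\,|\theta(\varphi_2)|^2 .
\]
Because $\varphi_1$ is cuspidal, $|\varphi_1|^2$ is smooth of rapid decay, so \eqref{eqn:theta-squared-regularized-expansion} (with $\phi_1=\phi_2$ the even part of $\varphi_2$) together with \eqref{eqn:norm-of-theta} gives
\[
  \int_{[\SL_2]}|\varphi_1|^2\,|\theta(\varphi_2)|^2\ =\ 2\|\varphi_2^+\|^2\int_{[\SL_2]}|\varphi_1|^2\ +\ \int_{\chi\in\mathfrak{X}(0)}\int_{[\SL_2]}|\varphi_1|^2\,\Eis_{\SL_2}\bigl(I_\chi(\varphi_2\otimes\overline{\varphi_2})\bigr).
\]
The decisive point — the one that breaks the circularity of the naive triple-product attack — is the \emph{absence} of a cuspidal term: a cusp form $\psi$ in $\Wd_\psi(\tau)$ would, via the period formula, reintroduce $L(\ad(\pi)\otimes\tau,\tfrac12)$ itself.

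\textbf{Step 3: the constant term (amplification) and the Eisenstein term (Hypothesis $H$).} Since $\pi$ has trivial central character, $|\varphi_1|^2$ descends to $[\PGL_2]$ and $\int_{[\SL_2]}|\varphi_1|^2=\tfrac12\|\varphi_1\|^2\asymp1$, so the first term is of exact convexity size; to defeat it I would re-run Steps 1--2 with $\varphi_1$ replaced by a Hecke-amplified vector of length $L$ supported at places away from $\mathfrak{q}$ and tuned to $\pi$, exactly as in \cite{michel-2009}, so that the constant function (distinguished from $\pi$ by its Hecke eigenvalues) is suppressed while the geometric side is inflated by $\gg L^{2-\varepsilon}$ and the Eisenstein term by at most $L^{O(1)}$. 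For the Eisenstein term itself: $\varphi_2$ being essentially fixed makes $I_\chi(\varphi_2\otimes\overline{\varphi_2})$ an essentially fixed flat section, so the $\chi$-integral converges and is effectively supported on $C(\chi)\ll P^{O(1)}$; lifting each $\SL_2$-Eisenstein period to $\PGL_2$ via \S\ref{sec:lift-sl2-periods-to-pgl2} (a finite sum over $\rtchi$ with $\rtchi^2=\chi$, necessarily unramified at $\mathfrak{q}$ by \S\ref{sec:interesting-local-estimates}) and applying the Rankin--Selberg period formula \S\ref{sec:rs-period-formula-pgl2}, one finds that $\langle|\varphi_1|^2,\Eis_{\PGL_2}(\Phi^{\rtchi})\rangle^2$, with $\Phi:=I_\chi(\varphi_2\otimes\overline{\varphi_2})$, equals $|L(\pi\otimes\overline{\pi}\otimes\rtchi,\tfrac12)|^2\prod_{v\in S}\mathcal{P}_{\PGL_2(F_v)}$ up to factors $Q^{o(1)}P^{o(1)}$; Hypothesis $H(\mathcal{F})$ (with $C(\pi\otimes\overline{\pi}\otimes\rtchi)\asymp Q^2P^{O(1)}$, by the factorization \eqref{eq:factorization-2}) bounds the $L$-value, \eqref{eqn:int-up-bnd} supplies the crucial extra local factor $\mathcal{P}_{\PGL_2(F_\mathfrak{q})}\ll Q^{-1}P^{O(1)}$ at $\mathfrak{q}$, and \S\ref{sec:uninteresting-upper-bounds} handles the other bad places, giving altogether an Eisenstein contribution $\ll L^{O(1)}Q^{-2\delta_0+o(1)}P^{O(1)}$. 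Combining with the amplified main term and choosing $L=Q^{\kappa}$ for a small fixed $\kappa>0$ produces $L(\ad(\pi)\otimes\tau,\tfrac12)\ll C(\ad(\pi)\otimes\tau)^{1/4-\delta}P^{O(1)}$ for some fixed $\delta>0$.

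\textbf{The main obstacle.} I expect the crux to be the interplay at the single interesting place $\mathfrak{q}$: the same local vectors must make the Shimura period \emph{large} (Step 1, giving \eqref{eqn:int-low-bnd}) and simultaneously force the Rankin--Selberg period to be \emph{small with a genuine power saving} (Step 3, giving \eqref{eqn:int-up-bnd}), and this saving must then survive both the amplifier inflation and the conductor bookkeeping. It is precisely the need to balance these competing local demands that confines the argument to essentially prime level and unramified central character; organizing the amplification so that the convexity-sized constant term is defeated without spoiling the Eisenstein estimate is the remaining delicate point.
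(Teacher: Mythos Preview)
Your overall architecture is exactly that of the paper: Qiu's period formula, Cauchy--Schwarz applied to $\varphi_3$, the theta-squared expansion killing the cuspidal spectrum, the lift of $\SL_2$-Eisenstein periods to $\PGL_2$ and Rankin--Selberg, Hypothesis $H(\mathcal{F})$ for the continuous spectrum, and the local bounds \eqref{eqn:int-low-bnd}, \eqref{eqn:int-up-bnd} at $\mathfrak{q}$. Your diagnosis of the ``main obstacle'' is also on point.

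The gap is in Step~3, in how you set up the amplification. You propose to ``re-run Steps 1--2 with $\varphi_1$ replaced by a Hecke-amplified vector \dots\ tuned to $\pi$.'' This cannot work: since $\varphi_1\in\pi$, any Hecke operator tuned to $\pi$ acts on $\varphi_1$ by a scalar, so both sides of your Cauchy--Schwarz inequality rescale identically and you recover only convexity. More conceptually, the constant term you are trying to suppress is $\langle|\varphi_1|^2,1\rangle$, and $|\varphi_1|^2$ is not in $\pi$ but in (the spectral support of) $\pi\otimes\overline{\pi}$, which genuinely contains the constant function; Hecke operators on $\pi$ do not separate them.

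The correct move---which is what ``exactly as in \cite{michel-2009}'' actually dictates once you have Cauchy--Schwarz'd away $\varphi_3$---is to amplify on $\varphi_3$, i.e., to choose a measure $\nu$ on $\Mp_2(\mathbb{A})$ built from the Hecke eigenvalues of $\tau$ (equivalently $\sigma$) so that $\varphi_3\ast\nu=c\,\varphi_3$ with $c\gg Q^{-o(1)}$. One then writes $\langle\varphi_1\theta(\varphi_2),\varphi_3\rangle=c^{-1}\langle(\varphi_1\theta(\varphi_2))\ast\nu^*,\varphi_3\rangle$, applies Cauchy--Schwarz, and is left to bound $\int\mathcal{I}_u\,d\nu^{(2)}(u)$ with $\mathcal{I}_u:=\int_{[\SL_2]}(\varphi_1\theta(\varphi_2))^u\,\overline{\varphi_1\theta(\varphi_2)}$. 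Your theta-squared expansion now applies to each $\mathcal{I}_u$: the ``main term'' is no longer the static $\|\varphi_1\|_{\SL_2}^2\,\|\theta(\varphi_2)\|^2\asymp1$ but rather the product of matrix coefficients $\langle\varphi_1^u,\varphi_1\rangle_{\SL_2}\,\langle\theta(\varphi_2)^u,\theta(\varphi_2)\rangle$, which decays like $\|\Ad(u)\|^{\vartheta-3/4+o(1)}$ by \S\ref{sec:bounds-mx-coefs} and \S\ref{sec:global-pgl2-to-sl2-matrix-coeff}; the amplifier is designed precisely so that $\int\|\Ad(u)\|^{-\gamma}\,d|\nu|^{(2)}(u)\le Q^{-\delta}$ for $\gamma>1/2$. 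The Eisenstein term is handled uniformly in $u$ by the argument you already gave (this is the ``key estimate'' of \S\ref{sec:thm-proof-key-estimate}), with the Sobolev norms of the translated vectors absorbed by $\|u\|^{O(1)}$.
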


\subsection{Preliminary reductions\label{sec:elem-redz}}
\label{sec-5-2}
Observe first (thanks to the
``pass to worst-case subsequences'' argument)
that in proving Theorem \ref{thm:reformulated-to-be-precise},
we may freely replace $\mathcal{F}$ by any infinite subset
thereof.
Next, observe that if the quantity $P$ in the statement of
Theorem \ref{thm:reformulated-to-be-precise}
satisfies $\log P \gg \log Q$,
then the required conclusion
is worse than the convexity bound.
For this reason,
we may and shall assume that
\begin{equation}\label{eq:essentially-unram-outside-p-makes-it-simpler}
  C(\tau) \cdot  \prod_{\mathfrak{p} : \mathfrak{p} \neq
    \mathfrak{q}} C(\pi_\mathfrak{p})
  = Q^{o(1)}.
\end{equation}
This reduction has
some pleasant consequences:
\begin{enumerate}
\item It implies that $\tau_{\mathfrak{q}}$ is unramified,
  so that the results of \S\ref{sec:interesting-local-estimates}
  become applicable.
\item It implies that
  \begin{equation}\label{eqn:simplified-conductor-approximation-tau-times-ad-pi}
  C(\ad(\pi) \otimes \tau) = Q^{4 + o(1)}.
\end{equation}
\item For each unitary Hecke character $\chi$,
\begin{equation}\label{eqn:simplified-conductor-approximation-pi-pi-chi}
  C(\pi \otimes \overline{\pi} \otimes \chi) = Q^{2 + o(1)}
  C(\chi)^{O(1)};
\end{equation}
\item $\# \ram(\pi) + \# \ram(\tau) = o(\log(Q))$.
\item \label{item:bad-Euler-irrel} If $S$ is a finite set of places
and $B_\mathfrak{p}$ ($\mathfrak{p}
\in S$)
are positive real quantities
for which
\begin{itemize}
\item $S$ contains at most $o(\log Q)$
  places not already in $\ram(\pi) \cup \ram(\tau)$,
  and
\item each $B_\mathfrak{p} \asymp 1$,
\end{itemize}
then $\prod_{\mathfrak{p} \in S} B_\mathfrak{p} = Q^{o(1)}$.
Note that known bounds toward Ramanujan (see \S\ref{sec:local-temperedness}) imply that
any individual non-archimedean Euler factor considered below has
magnitude $\asymp 1$.
\end{enumerate}

\subsection{Reduction to period bounds\label{sec:red-to-period-bounds}}
\label{sec-5-3}
The $L$-function $L(\ad(\pi) \otimes \tau,s)$ is self-dual, so
the global root number
$\eps(\ad(\pi) \otimes \tau,\tfrac{1}{2})$ is $\pm 1$; if it is
$-1$, then $L(\ad(\pi) \otimes \tau,\tfrac{1}{2}) = 0$, and so
the required estimate is trivial.  Assume henceforth that
\begin{equation}\label{eq:global-sign-1}
  \eps(\ad(\pi) \otimes \tau,\tfrac{1}{2}) = 1.
\end{equation}
For each place
$\mathfrak{p}$ of $F$, set
$\eps_\mathfrak{p} := \eps(\pi_\mathfrak{p} \otimes
\overline{\pi}_\mathfrak{p} \otimes
\tau_\mathfrak{p},\tfrac{1}{2})$.
Then the local Waldspurger packet
$\Wd_{\psi_\mathfrak{p}}(\tau_\mathfrak{p})$
contains an element
$\sigma_\mathfrak{p}$
with index $\eps_\mathfrak{p}$,
and the local hermitian form
$\mathcal{P}_{\SL_2(F_\mathfrak{p})}$
does not vanish identically
on $\pi_\mathfrak{p} \otimes \rho_{\psi_\mathfrak{p}} \otimes \sigma_\mathfrak{p}$
(see
\S\ref{sec:triple-integrals-matrix-coefficients-mp2}).
Moreover,
by \eqref{eq:global-sign-1},
we have
$\prod \eps_\mathfrak{p} = \eps(\pi \otimes \overline{\pi }
\otimes \tau,\tfrac{1}{2})
= \eps(\tau,\tfrac{1}{2})$,
so by results of Waldspurger recalled in \S\ref{sec:global-waldspurger-packets},
there is a cuspidal automorphic representation
$\sigma \in \Wd_{\psi}(\tau)$
with local components $\sigma_\mathfrak{p}$.

Fix isometric identifications
$\pi = \otimes \pi_\mathfrak{p}$,
$\sigma = \otimes \sigma_\mathfrak{p}$
per the conventions of \S\ref{sec-4-1-3}.
Define $\varphi_1 \in \pi, \phi_2 \in
\rho_{\psi}, \varphi_3 \in \sigma$
to be the pure tensors
obtained from the choices of local vectors
given in \S\ref{sec:uninteresting-lower-bounds} (for $\mathfrak{p} \neq \mathfrak{q}$)
and in \S\ref{sec:interesting-local-estimates} (for $\mathfrak{p} = \mathfrak{q}$).
Set $\varphi_2 := \theta(\phi_2)$.
By
the Shimura--like period formula from \S\ref{sec:global-shimura-integral},
the known global estimate
$L(\ad(\pi), 1) = Q^{o(1)}$ (see \cite{MR1344349}, \cite[\S 2.9]{2009arXiv0904.2429B})
and the local lower bounds of \S\ref{sec:uninteresting-lower-bounds} and \S\ref{sec:interesting-local-estimates},
we have
\[
\frac{L(\ad(\pi) \otimes \tau, \tfrac{1}{2})}{C(  \ad(\pi) \otimes \tau )^{1/4}}
\ll 
Q^{o(1)}
\left\lvert
  \langle \varphi_1 \varphi_2, \varphi_3 \rangle
\right\rvert^2.
\]
Recalling the estimate \eqref{eqn:simplified-conductor-approximation-tau-times-ad-pi}
for the conductor, the proof of Theorem \ref{thm:reformulated-to-be-precise}
reduces to that of the period bound
\begin{equation}\label{eqn:key-period-bound}
\langle \varphi_1 \varphi_2, \varphi_3 \rangle
\ll
Q^{-\delta}
\end{equation}
for some fixed $\delta > 0$.

We will prove \eqref{eqn:key-period-bound} by applying the amplification method
of \cite{michel-2009}
arranged so that Cauchy--Schwarz is applied the vector
$\varphi_3$.
The key input in that method is an asymptotic formula
for some mild generalizations
of the $L^2$-norms $\langle \varphi_1 \varphi_2, \varphi_1
\varphi_2 \rangle
= \langle |\varphi_1|^2, |\varphi_2|^2 \rangle$;
we establish such a formula below in
\S\ref{sec:thm-proof-key-estimate}.
In section \S\ref{sec:proof-main-term-estimates},
we refine that formula by estimating the main and error terms.
In section \S\ref{sec:constr-ampl},
we recall the construction of an amplifier, following \cite{michel-2009}.
In section \S\ref{sec:appl-ampl},
we pull everything together to deduce the
bound
\eqref{eqn:key-period-bound}.

\subsection{The key estimate\label{sec:thm-proof-key-estimate}}
\label{sec-5-4}
Recall from \S\ref{sec-4-1-1} that
to each $u \in \Mp_2(\mathbb{A})$ and each place $\mathfrak{p}$
we may attach a local component
$\pr_{\SL_2(F_\mathfrak{p})}(u) \in \SL_2(F_\mathfrak{p})$.  We say that $u$ is
\emph{reasonable} if
\begin{itemize}
\item $\pr_{\SL_2(F_{\mathfrak{q}})}(u) = 1$, and
\item $\# \{\mathfrak{p} : \pr_{\SL_2(F_{\mathfrak{p}})}(u) \neq 1 \} = o(\log Q)$.
\end{itemize}

Given a pure tensor
$\varphi = \otimes \varphi_\mathfrak{p}$
in some factorizable unitary representation (such as $\pi$ or $\rho_{\psi}$),
it will be convenient to introduce the abbreviation
$\mathcal{S}_d'(\varphi) := \|\varphi_{\mathfrak{q}}\|
\prod_{\mathfrak{p} \neq \mathfrak{q}}
\mathcal{S}_d(\varphi_{\mathfrak{p}})$;
thus
$\mathcal{S}'_d(\varphi)$ quantifies the ramification of $\varphi$
at places other than the distinguished place $\mathfrak{q}$.
We retain the standard convention concerning
implied indices of Sobolev norms (see \S\ref{sec-3-9},  \S\ref{sec-4-3}),
so that ``$\mathcal{S}'$'' means ``$\mathcal{S}_d'$ for some fixed $d$.''

It will be typographically convenient
in what follows to denote by
$\varphi^u$ the left action of a group element $u \in
\Mp_2(\mathbb{A})$ on an automorphic form $\varphi$.
Thus $\varphi_1^u := \pi(\pr_{\SL_2(\mathbb{A})}(u)) \varphi_1$
and
$\varphi_2^u := \theta(\rho_{\psi}(u) \phi_2)$
if $\varphi_2 = \theta(\phi_2)$.
The side-effect $\varphi^{u_1 u_2} = (\varphi^{u_2})^{u_1}$
of this convention will not matter for us.
\begin{lemma*}
Assume hypothesis $H(\mathcal{F})$.
There is a fixed $\delta_1 > 0$ so that
for each reasonable element $u \in \SL_2(\mathbb{A})$,
\begin{equation}\label{eqn:key-moment-estimate}
\int_{[\SL_2]}
\varphi_1^u \varphi_2^u \overline{\varphi_1}
\overline{\varphi_2}
-
\int_{[\SL_2]}
\varphi_1^u \overline{\varphi_1}
\int_{[\SL_2]}
\varphi_2^u 
\overline{\varphi_2}
\ll
Q^{-\delta_1}
\prod_{i=1,2} \mathcal{S}'(\varphi_i^u)
\mathcal{S}'(\varphi_i).
\end{equation}
\end{lemma*}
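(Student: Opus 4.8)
The plan is to recognize the left-hand side of \eqref{eqn:key-moment-estimate} as the continuous-spectrum contribution in the regularized spectral expansion of a product of elementary theta functions, and then to estimate that contribution using Hypothesis $H(\mathcal{F})$ together with the local upper bounds at $\mathfrak{q}$ and at the ramified places.

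First I would set $\Phi := \varphi_1^u \overline{\varphi_1}$, which is a smooth function of rapid decay on $[\SL_2]$ since $\varphi_1$ is cuspidal, and write $\varphi_2 = \theta(\phi_2)$, $\varphi_2^u = \theta(\rho_{\psi}(u)\phi_2)$ with $\phi_2$ a pure tensor in $\rho_{\psi}$, which we may take even (as $\theta$ kills odd parts). Applying \eqref{eqn:theta-squared-regularized-expansion} of \S\ref{sec:reg-spectral-exp-theta} to $\int_{[\SL_2]}\Phi\,\theta(\rho_{\psi}(u)\phi_2)\overline{\theta(\phi_2)}$, the ``constant-function'' term is
\[
  \int_{[\SL_2]}\varphi_1^u\overline{\varphi_1}\cdot\int_{[\SL_2]}\theta(\rho_{\psi}(u)\phi_2)\overline{\theta(\phi_2)} = \int_{[\SL_2]}\varphi_1^u\overline{\varphi_1}\cdot\int_{[\SL_2]}\varphi_2^u\overline{\varphi_2},
\]
which is precisely the quantity subtracted on the left-hand side of \eqref{eqn:key-moment-estimate}; crucially, \eqref{eqn:theta-squared-regularized-expansion} has no cuspidal contribution (\cite[Thm 2]{nelson-theta-squared}). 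Hence the left-hand side of \eqref{eqn:key-moment-estimate} equals the continuous contribution
\[
  \mathrm{(CSC)}:=\int_{\chi\in\mathfrak{X}(0)}\int_{[\SL_2]}\varphi_1^u\overline{\varphi_1}\cdot\Eis_{\SL_2}\bigl(I_\chi(\rho_{\psi}(u)\phi_2\otimes\overline{\phi_2})\bigr),
\]
the integrand being holomorphic at the trivial character (the simple pole of $\chi\mapsto I_\chi$ cancelling the simple zero of the Eisenstein intertwiner).

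Next I would estimate $\mathrm{(CSC)}$ termwise. For each unitary $\chi$ I would lift the inner $[\SL_2]$-integral to $[\PGL_2]$ via the lemma of \S\ref{sec:lift-sl2-periods-to-pgl2}, expressing it as a finite sum over Hecke characters $\rtchi$ with $\rtchi^2=\chi$ of periods $\int_{[\PGL_2]}\varphi_1^u\overline{\varphi_1}\,\Eis_{\PGL_2}\bigl(I_\chi(\rho_{\psi}(u)\phi_2\otimes\overline{\phi_2})^{\rtchi}\bigr)$. The Rankin--Selberg formula of \S\ref{sec:rs-period-formula-pgl2} equates the square of each such period with
\[
  c\,\frac{\zeta^{(S)}(2)\,|L^{(S)}(\pi\otimes\overline{\pi}\otimes\rtchi,\tfrac{1}{2})|^2}{L^{(S)}(\ad(\pi),1)^2\,|L^{(S)}(\rtchi,1)|^2}\prod_{v\in S}\mathcal{P}_{\PGL_2(F_v)}\bigl(\varphi_{1v}^u,\overline{\varphi_{1v}},I_{\chi_v}(\rho_{\psi}(u_v)\phi_{2v}\otimes\overline{\phi_{2v}})^{\rtchi_v}\bigr),
\]
with $S$ containing the bad places, $\ram(\pi)\cup\ram(\tau)$, the support of $u$, and the support of $\rtchi$. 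Into this I would feed: $H(\mathcal{F})$ together with \eqref{eqn:simplified-conductor-approximation-pi-pi-chi} and \eqref{eq:essentially-unram-outside-p-makes-it-simpler}, which give $|L(\pi\otimes\overline{\pi}\otimes\rtchi,\tfrac{1}{2})|^2\ll Q^{1-4\delta_0+o(1)}C(\rtchi)^{O(1)}$; the bound $L(\ad(\pi),1)=Q^{o(1)}$ and the classical lower bound $|L(\rtchi,1)|\gg C(\rtchi)^{-o(1)}$; at $v=\mathfrak{q}$, the estimate \eqref{eqn:int-up-bnd} (applicable since $u_{\mathfrak{q}}=1$ and $\tau_{\mathfrak{q}}$ is unramified), namely $\mathcal{P}_{\PGL_2(F_{\mathfrak{q}})}\ll C(\rtchi_{\mathfrak{q}})^{-A}\,C(\pi_{\mathfrak{q}}\otimes\overline{\pi}_{\mathfrak{q}}\otimes\rtchi_{\mathfrak{q}})^{-1/2}\ll C(\rtchi_{\mathfrak{q}})^{-A}Q^{-1}$ (vacuously if $\rtchi_\mathfrak{q}$ is ramified, by \S\ref{sec-3-17-5}); and at $v\in S\setminus\{\mathfrak{q}\}$, the corollary of \S\ref{sec:uninteresting-upper-bounds}, giving $\mathcal{P}_{\PGL_2(F_v)}\ll C(\rtchi_v)^{-A}\prod_{i}\mathcal{S}(\varphi_{iv}^u)^2\mathcal{S}(\varphi_{iv})^2$. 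Since $\prod_v C(\rtchi_v)=C(\rtchi)$ and the Sobolev factors are trivial outside the $o(\log Q)$ places of $S$ not in the support of $\rtchi$, multiplying these and taking square roots yields, for each $\rtchi$,
\[
  \Bigl|\int_{[\PGL_2]}\varphi_1^u\overline{\varphi_1}\,\Eis_{\PGL_2}(\cdots)\Bigr|\ll Q^{-2\delta_0+o(1)}\,C(\rtchi)^{-A'}\prod_{i=1,2}\mathcal{S}'(\varphi_i^u)\,\mathcal{S}'(\varphi_i),
\]
with $A'$ as large as desired (take $A$ large in \S\ref{sec:uninteresting-upper-bounds}), using also the near-isometry $\phi\mapsto\theta(\phi)$ to match $\mathcal{S}'(\phi_2^u),\mathcal{S}'(\phi_2)$ with $\mathcal{S}'(\varphi_2^u),\mathcal{S}'(\varphi_2)$. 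Summing over $\rtchi$ and integrating over $\chi\in\mathfrak{X}(0)$, using $\int_{\mathfrak{X}(0)}C(\chi)^{-A'}\,d\chi<\infty$, gives $|\mathrm{(CSC)}|\ll Q^{-2\delta_0+o(1)}\prod_{i=1,2}\mathcal{S}'(\varphi_i^u)\mathcal{S}'(\varphi_i)$, which is \eqref{eqn:key-moment-estimate} with $\delta_1:=\delta_0$ eventually.

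The main obstacle is the arithmetic of exponents in the previous step — verifying that the saving from subconvexity is not wiped out by the implicit ``length'' $\asymp Q$ of the family. What makes it work is that the genuine gain $C(\pi_{\mathfrak{q}}\otimes\overline{\pi}_{\mathfrak{q}}\otimes\rtchi_{\mathfrak{q}})^{-1/2}\asymp Q^{-1}$ from \eqref{eqn:int-up-bnd} exactly compensates the factor $C(\pi\otimes\overline{\pi}\otimes\rtchi)^{1/2}\asymp Q$ appearing in the convexity bound, so that convexity alone would give only $\mathrm{(CSC)}\ll Q^{o(1)}$ — no saving — while the subconvex exponent $\delta_0$ from $H(\mathcal{F})$ survives intact as a power saving. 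Secondary points are uniformity of the bound across the trivial character (handled by holomorphy of the integrand, with $|L(\rtchi,1)|^{-2}$ only improving as $\rtchi$ approaches a trivial branch), convergence of the $\chi$-integral (ensured by taking $A$ large), and the routine Sobolev bookkeeping at the $o(\log Q)$ ramified places.
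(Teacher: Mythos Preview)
Your proposal is correct and follows essentially the same route as the paper: apply the regularized spectral expansion of \S\ref{sec:reg-spectral-exp-theta} to identify the difference as the continuous-spectrum term, lift each $\SL_2$-period to $\PGL_2$ via \S\ref{sec:lift-sl2-periods-to-pgl2}, factor via the Rankin--Selberg formula of \S\ref{sec:rs-period-formula-pgl2}, and then combine Hypothesis $H(\mathcal{F})$ with the local upper bounds of \S\ref{sec:uninteresting-upper-bounds} and \S\ref{sec:interesting-local-estimates}. One point worth making more explicit (the paper does this): the $[\PGL_2]$-period vanishes unless $\rtchi_\mathfrak{p}$ is unramified at every good place where $u$, $\pi$, and $\phi_2$ are unramified (see the Remark after \S\ref{sec:lift-sl2-periods-to-pgl2}), and it is this vanishing---rather than a direct bound on $\#\ram(\rtchi)$---that forces $\#S = o(\log Q)$ and hence keeps the product of local implied constants under control as in \S\ref{sec:elem-redz}.
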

\begin{proof}
We first rearrange $\varphi_1^u \varphi_2^u \overline{\varphi_1} \overline{\varphi_2}
 = \varphi_1^u \overline{\varphi_1}  \varphi_2^u \overline{\varphi_2}$.
By the regularized spectral expansion of \S\ref{sec:reg-spectral-exp-theta},
we reduce to showing that
\begin{equation}
\label{eqn:to-be-shown-after-sl2-expn}
\int_{\chi \in  \mathfrak{X}(0)}
\int_{[\SL_2]}
\varphi_1^u \overline{\varphi_1}
\Eis_{\SL_2}(I_\chi(\phi_2^u \otimes
\overline{\phi_2}))
\ll
\mathcal{E}
\end{equation}
where
$\mathcal{E}
:= \mathcal{S}'(\varphi_1^u)
\mathcal{S}'(\varphi_1)
\mathcal{S}'(\phi_2^u)
\mathcal{S}'(\phi_2)
Q^{-\delta_1}$.
Let $A > 1$ be fixed and large enough that
$\int_{\chi \in   \mathfrak{X}(0)}
\sum_{\rtchi \in \mathfrak{X}(0) :
  \rtchi^2 = \chi}
C(\rtchi)^{-A} < \infty$.
By lifting the $\SL_2$-periods in \eqref{eqn:to-be-shown-after-sl2-expn}
to $\PGL_2$-periods as in \S\ref{sec:lift-sl2-periods-to-pgl2},
we reduce to showing for each $\rtchi,\chi \in \mathfrak{X}(0)$
with $\rtchi^2 = \chi$
that
\begin{equation}\label{eqn:to-be-shown-after-sl2-to-pgl2}
  \int_{[\PGL_2]}
\varphi_1^u \overline{\varphi_1}
\Eis_{\PGL_2}(I_\chi(\phi_2^u \otimes
\overline{\phi_2})^\rtchi)
\ll C(\rtchi)^{-A}
\mathcal{E}.
\end{equation}
As noted in \S\ref{sec:lift-sl2-periods-to-pgl2},
the LHS of \eqref{eqn:to-be-shown-after-sl2-to-pgl2}
vanishes unless, as we henceforth assume, $\rtchi_{\mathfrak{p}}$ is unramified 
for all good places $\mathfrak{p}$ of $F$
with $\pr_{\SL_2(F_{\mathfrak{p}})}(u) = 1$ and $\phi_{\mathfrak{p}}$ unramified.
By this observation and the assumption that 
$u$ is reasonable,
the set $S$ of places
at which anything is ramified satisfies $\# S = o(\log Q)$;
this property will be used in what follows to control products
of implied constants and local Euler factors,
as discussed in \S\ref{sec:elem-redz}.
By the Rankin--Selberg period formula of \S\ref{sec:rs-period-formula-pgl2}
and the definition of $I_\chi$ given in \S\ref{sec:reg-spectral-exp-theta},
the squared magnitude of the LHS of \eqref{eqn:to-be-shown-after-sl2-to-pgl2}
factors as a product
$\mathcal{G} \prod_{\mathfrak{p} \in S}
\mathcal{L}_\mathfrak{p}$
of global and local quantities,
where
\[
  \mathcal{G} \asymp \frac{|L^{(S)}(\pi \otimes \overline{\pi} \otimes
  \chi,\tfrac{1}{2})|^2}{L^{(S)}(\ad(\pi),1)^2}
\]
and (with notation as in \S\ref{sec:induced-rep-local-from-sl2-to-pgl2}, \S\ref{sec:local-I-chi})
\[
  \mathcal{L}_\mathfrak{p} =
  \mathcal{P}_{\PGL_2(F_\mathfrak{p})}
  (\varphi_{1 \mathfrak{p}}^{u_\mathfrak{p}},
  \overline{\varphi_{1 \mathfrak{p} }},
  I_{\chi_\mathfrak{p}}(\phi_{2, \mathfrak{p}}^{u_\mathfrak{p}} \otimes \overline{\phi_{2, \mathfrak{p}}})^{\rtchi_\mathfrak{p}}
  ).
\]
By the subconvexity hypothesis $H(\mathcal{F})$,
the approximation \eqref{eqn:simplified-conductor-approximation-pi-pi-chi}
for the analytic conductor and the estimate $L(\ad(\pi), 1) = Q^{o(1)}$,
we have \[\mathcal{G} \ll Q^{- (2.001) \delta_1} C(\pi \otimes \overline{\pi} \otimes \rtchi)^{1/2} 
\]
for some fixed $\delta_1
> 0$.
The local estimates of \S\ref{sec-3-16} and \S\ref{sec-3-17}
and the assumption $u_{\mathfrak{q}} = 1$ give
\[
  |\mathcal{L}_\mathfrak{p}| \ll
  C(\rtchi_{\mathfrak{p}})^{-2 A}
  \cdot 
   \begin{cases}
  \mathcal{S}(\varphi_{1\mathfrak{p} }^{u_\mathfrak{p}})^2
  \mathcal{S}(\varphi_{1 \mathfrak{p} })^2
  \mathcal{S}(\phi_{2 \mathfrak{p} }^{u_\mathfrak{p}})^2
  \mathcal{S}(\phi_{2 \mathfrak{p} })^2
  & \text{if } \mathfrak{p} \neq \mathfrak{q}, \\
  C(\pi \otimes \overline{\pi} \otimes \rtchi)^{-1/2} \|\varphi_{1 \mathfrak{q}}\|^4
  \|\phi_{2 \mathfrak{q}}\|^4 
  & \text{if } \mathfrak{p} = \mathfrak{q}.
  \end{cases}
\]
These estimates combine to give
$|\mathcal{G}
\prod_{\mathfrak {p} \in S}
\mathcal{L}_\mathfrak{p}|^{1/2} \ll C(\rtchi)^{-A} \mathcal{E}$, as required.
\end{proof}

\subsection{Bounds for matrix coefficients and Sobolev norms\label{sec:proof-main-term-estimates}}
\label{sec-5-5}
Let $u \in \Mp_2(\mathbb{A})$ be reasonable.
In this section, we refine the estimate \eqref{eqn:key-moment-estimate}
by taking into account our choice of vectors
and bounds for matrix coefficients.

Recall from \S\ref{sec:elem-redz} that we have reduced to the case that
$\prod_{\mathfrak{p} \neq \mathfrak{q}} C(\pi_\mathfrak{p}) =
Q^{o(1)}$,
and that the local component $\pi_{\mathfrak{q}}$ has
conductor $\mathfrak{q}$ and unramified central character, hence is an unramified twist
of the special representation.  In
particular, $\pi_{\mathfrak{q}}$ is not isomorphic to its
twist by any nontrivial quadratic character of
the local multiplicative group $F_{\mathfrak{q}}^\times$.  It follows that any
(quadratic) Hecke character $\chi$ for which
$\pi \otimes \chi \cong \pi$ satisfies $C(\chi) = Q^{o(1)}$;
moreover,  the number of such $\chi$ is
at most $O(2^{\# \ram(\pi)}) = Q^{o(1)}$.
By a variant
of the arguments of \S\ref{sec:global-pgl2-to-sl2-matrix-coeff}
(taking into account that $\pr_{\SL_2(F_\mathfrak{q})}(u) = 1$)
and the consequence $\mathcal{S} '(\varphi_1)^2 \ll Q^{o(1)}$
of our choice of $\varphi_1$ 
(see \S\ref{sec:uninteresting-lower-bounds}, \S\ref{sec:interesting-local-estimates}),
it follows that
\begin{equation}\label{eq:matrix-coef-global-final-1}
  \int_{[\SL_2]} \varphi_1^u \overline{\varphi_1}
  \ll
  \|\Ad(u)\|^{\vartheta' - 1/2}
  Q^{o(1)}.
\end{equation}

Similarly but more simply, the global identity \eqref{eqn:norm-of-theta}
describing the unitary structure on elementary
theta functions
and the local estimate of
\S\ref{sec:bounds-mx-coefs}
for their matrix coefficients furnish the bound
\begin{equation}\label{eq:matrix-coef-global-final-2}
  \int_{[\SL_2]} \varphi_2^u \overline{\varphi_2}
  \ll 
  \|\Ad(u)\|^{-1/4}  Q^{o(1)}.
\end{equation}

Our choice of vectors and
\eqref{eq:essentially-unram-outside-p-makes-it-simpler}
imply for $i=1,2$ that
\begin{equation}\label{eqn:bounds-for-global-sobolev-norms-of-choices}
  \mathcal{S}'(\varphi_i)
  \ll  Q^{o(1)},
  \quad 
  \mathcal{S}'(\varphi_i^u)
  \ll  \|u\|^{O(1)}  Q^{o(1)}.
\end{equation}
For future reference, we record also that
\begin{equation}\label{eqn:bounds-for-global-norm-of-phi-3}
  \|\varphi_3\| \ll 1.
\end{equation}

We conclude by
\eqref{eqn:key-moment-estimate},
\eqref{eq:matrix-coef-global-final-1},
\eqref{eq:matrix-coef-global-final-2},
and
\eqref{eqn:bounds-for-global-sobolev-norms-of-choices}
that the $\SL_2$-periods
\[\mathcal{I}_u := \int_{[\SL_2]}
\varphi_1^u \varphi_2^u \overline{\varphi_1}
\overline{\varphi_2}
\]
satisfy
\begin{equation}\label{eq:ultimate-sl2-period-estimate}
  \mathcal{I}_u \ll
  ( \|\Ad(u)\|^{\vartheta'-3/4}
  +
  \|u\|^{B}
  Q^{-\delta_1}) Q^{o(1)}
\end{equation}
for some fixed $\delta_1 > 0, B \geq 0$.

\subsection{Construction of an amplifier\label{sec:constr-ampl}}
\label{sec-5-6}
\begin{lemma*}
  For each fixed $\eta > 0$
  there is a finite measure
  $\nu$ on $\Mp_2(\mathbb{A})$
  for which the following properties hold eventually:
  \begin{enumerate}
  \item \label{item:bound-tot-var-nu}
    The total variation measure $|\nu|$ has mass
    bounded above by $Q^{C \eta}$,
    where  $C \geq 0$ is fixed and independent of $\eta$.
  \item \label{item:bound-tot-var-nu-2}
    Set $|\nu|^{(2)} := |\nu| \ast |\nu^*|$.
    For each fixed $\gamma > 1/2$ there is a fixed $\delta > 0$ so that
    \[
    \int \|\Ad(u)\|^{-\gamma} \, d |\nu|^{(2)}(u)
    \leq Q^{-\delta}.
    \]
  \item \label{item:nu-preserves-varphi-3} $\varphi_3 \ast \nu = c \varphi_3$
    for some $c \gg Q^{-o(1)}$.
  \item  \label{item:support-nu} Each $u \in \supp(\nu)$
    has the properties:
    \begin{enumerate}
    \item $u$ is reasonable in the sense of \S\ref{sec:thm-proof-key-estimate};
    \item $\pr_{\SL_2(F_\mathfrak{p})}(u) = 1$
      unless $\mathfrak{p}$ is a finite place at which
      $\pi,\sigma,\psi$ are all unramified;
    \item $\|u\| \leq Q^{\eta}$.
    \end{enumerate}
    In particular each $u \in \supp(|\nu|^{(2)})$ is reasonable.
  \end{enumerate}
\end{lemma*}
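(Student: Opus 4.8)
The plan is to follow the amplifier construction of \cite{michel-2009}, realizing Hecke operators at a dyadic family of good primes $\mathfrak{l}$ as measures on $\Mp_2(\mathbb{A})$ and using the unramified Waldspurger correspondence of \S\ref{sec:local-waldspurger-packets} (so that $\sigma_{\mathfrak{l}} \cong \mathcal{I}_{\Mp_2}(\chi_{\mathfrak{l}})$ when $\tau_{\mathfrak{l}} = \mathcal{I}_{\PGL_2}(\chi_{\mathfrak{l}})$) to compute the relevant Hecke eigenvalues of $\varphi_3$. Fix a small fixed multiple $\eta'$ of $\eta$, put $L := Q^{\eta'}$, and let $\mathcal{L}$ be the set of good places $\mathfrak{l}$ of $F$ (so $\mathfrak{l} \neq \mathfrak{q}$) with $\norm(\mathfrak{l}) \in [L,2L]$ at which $\pi$ and $\sigma$ are unramified; since $\#\ram(\pi) + \#\ram(\sigma) = o(\log Q)$ (see \S\ref{sec:elem-redz}), one has $\#\mathcal{L} \gg L/\log L$ eventually. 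For $\mathfrak{l} \in \mathcal{L}$ and $j \in \{1,2\}$, let $\mu_{\mathfrak{l}^j}$ be the finite measure on $\Mp_2(\mathbb{A})$ that is trivial away from $\mathfrak{l}$ and at $\mathfrak{l}$ is supported on (the canonical lift of) $K_{\mathfrak{l}}\,t(\varpi_{\mathfrak{l}}^j)\,K_{\mathfrak{l}}$, normalized so that $\varphi_3 \ast \mu_{\mathfrak{l}^j} = \lambda_\tau(\mathfrak{l}^j)\,\varphi_3$, where $\lambda_\tau(\mathfrak{l})$, $\lambda_\tau(\mathfrak{l}^2) = \lambda_\tau(\mathfrak{l})^2 - 1$ are the arithmetically normalized Hecke eigenvalues of $\tau$ at $\mathfrak{l}$; this is legitimate because $\varphi_{3,\mathfrak{l}}$ is the spherical unit vector of $\sigma_{\mathfrak{l}}$ and, via the Shimura correspondence, the spherical Hecke algebra of $\Mp_2(F_{\mathfrak{l}})$ acts on it through the same eigenvalues as that of $\PGL_2(F_{\mathfrak{l}})$ on $\tau_{\mathfrak{l}}$. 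With this normalization $\mu_{\mathfrak{l}^j}$ has total mass $\asymp \norm(\mathfrak{l})^{j/2}$ and is supported on elements $u$ with $\|u\| \asymp \norm(\mathfrak{l})^{j}$, $\|\Ad(u)\| \asymp \norm(\mathfrak{l})^{2j}$, while $\vartheta$-temperedness gives $|\lambda_\tau(\mathfrak{l}^j)| \ll \norm(\mathfrak{l})^{j\vartheta}$. I then set
\[
  \nu := \frac{1}{c_0}\sum_{\mathfrak{l}\in\mathcal{L}}\Bigl(\overline{\lambda_\tau(\mathfrak{l})}\,\mu_{\mathfrak{l}} + \overline{\lambda_\tau(\mathfrak{l}^2)}\,\mu_{\mathfrak{l}^2}\Bigr), \qquad c_0 := \sum_{\mathfrak{l}\in\mathcal{L}}\bigl(|\lambda_\tau(\mathfrak{l})|^2 + |\lambda_\tau(\mathfrak{l}^2)|^2\bigr).
\]

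Properties (1), (3), (4) are then quick. By construction $\varphi_3 \ast \nu = \varphi_3$, so (3) holds with $c = 1$. The relation $\lambda_\tau(\mathfrak{l}^2) = \lambda_\tau(\mathfrak{l})^2 - 1$ forces $|\lambda_\tau(\mathfrak{l})|^2 + |\lambda_\tau(\mathfrak{l}^2)|^2 \gg 1$ for each $\mathfrak{l}$ (if $|\lambda_\tau(\mathfrak{l})| < \tfrac12$ then $|\lambda_\tau(\mathfrak{l}^2)| > \tfrac34$), hence $c_0 \gg \#\mathcal{L} \gg L/\log L$; together with $|\lambda_\tau(\mathfrak{l}^j)| \ll \norm(\mathfrak{l})^{j\vartheta}$ this also yields $c_0 \asymp L^{1+o(1)}$. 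The mass of $|\nu|$ is therefore $\ll c_0^{-1}\sum_{\mathfrak{l}}\norm(\mathfrak{l})^{O(1)} \ll L^{O(1)} \leq Q^{C\eta}$ for a fixed $C$, which is (1). Finally (4) is immediate from the support description: each $u \in \supp(\nu)$ is nontrivial at a single good place $\mathfrak{l} \neq \mathfrak{q}$ and satisfies $\|u\| \ll L^{O(1)} \leq Q^{\eta}$ once $\eta'$ is a small enough fixed multiple of $\eta$, and a product of two reasonable elements is again reasonable.

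The substance is (2). Write $|\nu|^{(2)} = |\nu| \ast |\nu^{\ast}|$; the double cosets $K_{\mathfrak{l}}\,t(\varpi_{\mathfrak{l}}^j)\,K_{\mathfrak{l}}$ are symmetric, so $|\nu^{\ast}| = |\nu|$, and I would expand $|\nu| \ast |\nu|$ as a double sum over pairs $(\mathfrak{l},i),(\mathfrak{l}',j)$ with $i,j \in \{1,2\}$, separating the off-diagonal terms $\mathfrak{l}\neq\mathfrak{l}'$ from the diagonal terms $\mathfrak{l}=\mathfrak{l}'$. For $\mathfrak{l}\neq\mathfrak{l}'$ the two factors live at distinct places, so $\|\Ad(u)\| \asymp \norm(\mathfrak{l})^{2i}\norm(\mathfrak{l}')^{2j}$ throughout $\mu_{\mathfrak{l}^i}\ast\mu_{\mathfrak{l}'^j}$; the weight factorizes, and since $\gamma > \tfrac12$ each factor is $\ll |\lambda_\tau(\mathfrak{l}^i)|\,\norm(\mathfrak{l})^{i/2-2i\gamma} \ll \norm(\mathfrak{l})^{\vartheta+1/2-2\gamma}$ (using $\vartheta < \tfrac14$ to drop the $i$-dependence), so summing over the $\ll (L/\log L)^2$ pairs and dividing by $c_0^2 \asymp L^{2+o(1)}$ bounds this part by a fixed negative power of $L$, hence by $Q^{-\delta}$. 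For the diagonal terms one uses the decomposition $\mu_{\mathfrak{l}^i}\ast\mu_{\mathfrak{l}^j}^{\ast} = \kappa_{\mathfrak{l}}^{(i,j)}\,\delta_e + (\text{a part supported where }\|\Ad(u)\|\gg\norm(\mathfrak{l})^2)$, with $\kappa_{\mathfrak{l}}^{(i,j)} = 0$ for $i\neq j$ and $\kappa_{\mathfrak{l}}^{(i,i)} \asymp 1$: the $\delta_e$-part contributes exactly $c_0^{-2}\sum_{\mathfrak{l}}\sum_i|\lambda_\tau(\mathfrak{l}^i)|^2\kappa_{\mathfrak{l}}^{(i,i)} \asymp c_0^{-1} \asymp L^{-1+o(1)}$, and the remaining part is likewise a fixed negative power of $L$ (using the mass bounds, $|\lambda_\tau(\mathfrak{l}^j)| \ll \norm(\mathfrak{l})^{j\vartheta}$, $\gamma > \tfrac12$, and $\vartheta < \tfrac16$); both are $\ll Q^{-\delta}$. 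This establishes (2) with a fixed $\delta = \delta(\gamma,\eta') > 0$.

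The step I expect to be the main obstacle — routine bookkeeping rather than anything deep — is pinning down the metaplectic Hecke measures $\mu_{\mathfrak{l}^j}$: verifying via the Shimura correspondence of spherical Hecke algebras that they can be normalized so that the eigenvalue of $\mu_{\mathfrak{l}^j}$ on $\varphi_3$ is exactly $\lambda_\tau(\mathfrak{l}^j)$ while the total masses and the $\delta_e$-components of $\mu_{\mathfrak{l}^j}\ast\mu_{\mathfrak{l}^j}^{\ast}$ have the asserted sizes, and then running the diagonal/off-diagonal estimate in (2) cleanly. The analytic content is otherwise that of the $\PGL_2$ amplification of \cite{michel-2009}.
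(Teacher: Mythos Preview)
Your proposal is correct and follows essentially the same approach as the paper: both build $\nu$ from the genuine spherical Hecke operators $T_{\mathfrak{l}}, T_{\mathfrak{l}^2}$ at good primes $\mathfrak{l}$ outside $\ram(\pi)\cup\ram(\sigma)\cup\{\mathfrak{q}\}$, invoke the Shimura correspondence so that these act on $\varphi_3$ by the Hecke eigenvalues $\lambda_\tau(\mathfrak{l}^j)$ of $\tau$, and exploit the relation $T_{\mathfrak{l}}^2 = T_{\mathfrak{l}^2}+1$ (equivalently $\lambda_\tau(\mathfrak{l})^2 = \lambda_\tau(\mathfrak{l}^2)+1$) to force $|\lambda_\tau(\mathfrak{l})|+|\lambda_\tau(\mathfrak{l}^2)|\gg 1$ and hence secure property~(3); property~(2) is in both cases the diagonal/off-diagonal double-coset count from \cite[\S4.1]{venkatesh-2005}. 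The only differences are cosmetic: the paper takes coefficients $c_{\mathfrak{l}^j}=\sgn(\lambda_\tau(\mathfrak{l}^j))^{-1}$ with normalization $(L/\log L)^{-1}$ and sums over $\norm(\mathfrak{p})^a\leq L$, whereas you weight by $\overline{\lambda_\tau(\mathfrak{l}^j)}$, normalize by $c_0$ so that $c=1$ exactly, and restrict to a dyadic window $[L,2L]$.
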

\begin{proof}
  The proof is as in \cite[\S5.2.3]{michel-2009} which in turn
  refers to \cite[\S4.1]{venkatesh-2005}.  For convenience, we
  record the construction of $\nu$ and sketch the proof.  Let
  $T$ denote the set of good primes $\mathfrak{p}$ of $F$ at
  which $\pi,\sigma,\psi$ and hence also $\varphi_3$ are
  unramified.  For each $\mathfrak{p} \in T$, the genuine
  spherical Hecke algebra of $\Mp_2(F_\mathfrak{p})$ (consisting
  of genuine compactly-supported distributions invariant on the
  left and right under the image of the standard maximal compact
  subgroup of $\SL_2(F_\mathfrak{p})$) admits a linear basis
  $1, T_\mathfrak{p}, T_{\mathfrak{p}^2}, \dotsc$, where
  $T_{\mathfrak{p}^a}$ acts on the unramified subspace of $\sigma_\mathfrak{p}$ by the
  corresponding Hecke eigenvalue $\lambda_\tau(\mathfrak{p}^a)$ of the lift
  $\tau_\mathfrak{p}$ of $\sigma_\mathfrak{p}$, normalized so that temperedness is
  equivalent to $|\lambda_\tau(\mathfrak{p}^a)| \leq a+1$.  The
  relation $T_\mathfrak{p}^2 = T_{\mathfrak{p}^2} + 1$ holds.
  We may identify each $T_{\mathfrak{p}^a}$ with a finite
  measure on $\Mp_2(\mathbb{A})$, acting on $\varphi_3$
  by $\lambda_\tau(\mathfrak{p}^a)$.
  We may find a fixed
  $C_0 \geq 0$ so that $\|u\| \leq \norm(\mathfrak{p})^{C_0 a}$
  for all $u \in \supp(T_{\mathfrak{p}^a})$.  Set
  $L := Q^{\eta/C_0}$.  For an integral ideal $\mathfrak{n}$,
  define $c_\mathfrak{n} := 0$ unless $\mathfrak{n}$ is of the
  form $\mathfrak{p}^a$ with $\mathfrak{p} \in T$ and
  $a \in \{1,2\}$ with $\norm(\mathfrak{p})^a \leq L$, in which
  case set
  $c_\mathfrak{n} := \sgn(\lambda_\tau(\mathfrak{n}))^{-1}$,
  where $\sgn(z) := z/|z|$.  Set
  $\nu := (L/\log L)^{-1} \sum_{\mathfrak{n}} c_\mathfrak{n}
  T_\mathfrak{n}$.
  Assertions \eqref{item:bound-tot-var-nu} and
  \eqref{item:support-nu} are then clear by construction.  We
  have $\varphi_3 \ast \nu = c \varphi_3$ with
  $c := (L / \log L)^{-1} \sum_{\mathfrak{n} \in \mathcal{L}}
  |\lambda_\tau(\mathfrak{n})|$,
  so assertion \eqref{item:nu-preserves-varphi-3} follows from
  the estimate $\# \mathcal{L} \gg Q^{-o(1)} L$ (a consequence
  of \S\ref{sec:elem-redz}) and the identity
  $\lambda_\tau(\mathfrak{p})^2 = \lambda_\tau(\mathfrak{p}^2) +
  1$.
  Assertion \eqref{item:bound-tot-var-nu-2} follows by direct
  calculation as in \cite[\S4.1]{venkatesh-2005}.
\end{proof}

\subsection{Application of the amplification method\label{sec:appl-ampl}}
\label{sec-5-7}
To finish the proof of \eqref{eqn:key-period-bound},
we argue as in \cite[\S5.2]{michel-2009}:
Let $\eta >0$ be fixed and small enough in terms of
the fixed quantities $\delta_1$ and $B$ arising in \eqref{eq:ultimate-sl2-period-estimate}.
Let $\nu$ be the amplifier
constructed in \S\ref{sec-5-6} in terms of the parameter $\eta$.  
By
the first two properties of the amplifier,
our choice of $\eta$,
and the inequality  $\vartheta' - 3/4 < -1/2$,
we have
\begin{equation}\label{eqn:final-desired-estimate}
  \int_{u}
  (
  \|\Ad(u)\|^{\vartheta' -3/4}
  +
  \|u\|^{B}
  Q^{-\delta_1})
  d |\nu|^{(2)}(u)
  \ll Q^{-\delta}
\end{equation}
for some fixed $\delta > 0$.
By the third property of the amplifier,
we have $\varphi_3 = c \varphi_3 \ast \nu$ for
some eigenvalue $c \gg Q^{-o(1)}$.
The proof of \eqref{eqn:key-period-bound} thereby reduces
to the proof of an analogous bound
for
$\langle \varphi_1 \varphi_2, \varphi_3 \ast \nu \rangle$,
or equivalently, for 
$\langle (\varphi_1 \varphi_2) \ast \nu^*, \varphi_3 \rangle$.
By \eqref{eqn:bounds-for-global-norm-of-phi-3} and Cauchy--Schwarz,
we reduce to bounding $\|(\varphi_1 \varphi_2) \ast \nu^*\|^2 = \langle \varphi_1 \varphi_2, (\varphi_1 \varphi_2) \ast \nu^{(2)} \rangle$
where $\nu^{(2)} := \nu^* \ast \nu$.
Expanding the definition of $(\varphi_1 \varphi_2) \ast \nu^{(2)}$,
we reduce to showing that the $\SL_2$-periods
$\mathcal{I}_u$ of  \S\ref{sec:proof-main-term-estimates}
satisfy
$\int_{u}
\mathcal{I}_u
d \nu^{(2)}(u)
\ll
Q^{-\delta}$
for some fixed $\delta > 0$.
The latter estimate holds by \eqref{eq:ultimate-sl2-period-estimate}, \eqref{eqn:final-desired-estimate} and the triangle inequality.

\section{Comments on other aspects}
\label{sec:comments-other-aspects}
We give here some evidence that new ideas are required to adapt our
method to other aspects.
Reverting to the local notation of \S\ref{sec-3}, we will show
that a simplified form of the crucial lemma of
\S\ref{sec:interesting-local-estimates} fails for
representations $\pi$ other than those considered there.
The generality of this paper is thus optimal with respect
to a sufficiently restricted form of the method.

We assume that we are in the unramified case (\S\ref{sec:local-unramified-case}), and in particular
that $F$ is non-archimedean; similar considerations apply
more generally.  Let $\pi, \tau$
and $\sigma \in \Wd_{\psi}(\tau)$ be as in
\S\ref{sec:triple-integrals-matrix-coefficients-mp2}, with
$\tau$ (and hence $\sigma$) an unramified principal series
representation.  We assume for simplicity that $\tau$ (and hence
$\sigma$) is tempered.  We assume also that
\begin{equation}\label{eqn:ad-pi-ramified}
  C(\ad(\pi)) \neq 1,
\end{equation}
since we are ultimately interested in taking
$C(\ad(\pi))$ to $\infty$.

Let $\varphi_2 \in \rho_{\psi}$
be the unramified unit vector
given in the Schroedinger model by $1_\mathfrak{o} \in \mathcal{S}(F)$.
It follows from the proof of the lemma of
\S\ref{sec:interesting-local-estimates}
that if $\pi$ is a twist of the special representation,
then there are unit vectors
$\varphi_1 \in \pi, \varphi_3 \in \sigma$ so that
\begin{equation}\label{eqn:sec-5-recovering-convexity}
  \mathcal{P}_{\SL_2(F)}(\varphi_1, \varphi_2, \varphi_3)
\gg C(\ad(\pi) \otimes \tau)^{-1/4}.
\end{equation}
Indeed, this estimate is at the heart of that lemma.  The fact
that the inverse of the RHS of
\eqref{eqn:sec-5-recovering-convexity} looks like the convexity
bound is necessary for the success of the amplification method
as implemented in this paper.
We show that an analogous estimate cannot hold in any other aspect:
\begin{lemma*}
  If $\pi$ is not a twist of the special representation, then for all unit vectors
  $\varphi_1 \in \pi, \varphi_3 \in \sigma$,
  \begin{equation}\label{eqn:3-8-bound}
      \mathcal{P}_{\SL_2(F)}(\varphi_1, \varphi_2,
  \varphi_3)
  \ll C(\ad(\pi) \otimes \tau)^{-3/8}.
  \end{equation}
\end{lemma*}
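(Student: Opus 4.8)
The plan is to imitate, in reverse, the favorable local computation of \S\ref{sec:interesting-local-estimates}: there one produces vectors making $\mathcal{P}_{\SL_2(F)}$ as large as $C(\ad(\pi)\otimes\tau)^{-1/4}$ when $\pi$ is a twist of the special representation, and here one must show that with $\varphi_2$ pinned to the unramified vector $1_{\mathfrak{o}}$, no choice of $\varphi_1,\varphi_3$ can approach that size once $\pi$ is ramified but of a different type. First I would linearize: since $\sigma=\mathcal{I}_{\Mp_2}(\chi)$ with $\chi$ unitary, $\sigma$ is tempered, so the lemma of \S\ref{sec:linearize-local-shimura} gives $\mathcal{P}_{\SL_2(F)}(\varphi_1,\varphi_2,\varphi_3)=c\,|\ell(W,\phi,f)|^2$ with $c\asymp 1$, where $W=\varphi_1$ is taken in the $\overline{\psi}$-Whittaker model, $\phi=\varphi_2=1_{\mathfrak{o}}$, and $f=\varphi_3$ in the induced model of $\sigma$. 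Since $\tau$ is unramified one has $C(\ad(\pi)\otimes\tau)=C(\ad(\pi))^2$ (the argument of \S\ref{sec-3-17-2}, valid for any $\pi$ when $\tau$ is unramified), so it suffices to prove
\[
  |\ell(W,1_{\mathfrak{o}},f)|^2 \;\ll\; C(\ad(\pi))^{-3/4}
\]
uniformly over unit vectors $W\in\pi$ and $f\in\sigma$.

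Next I would dispose of the dependence on $f$. By the reduction of \S\ref{sec:some-estimates}, $\ell(W,\phi,f)=\int_{x\in F}\overline{f(n'(x))}\,\mathcal{L}_\chi\!\big(n'(x)W,\,n'(x)\phi\big)\,dx$, where $\mathcal{L}_\chi(W',\phi')=\int_{y\in F^\times}W'(t(y))\,\phi'(y)\,\overline{\chi}(y)\,|y|^{-1/2}\,d^\times y$ is the metaplectic Rankin--Selberg functional, with normalizations arranged so that $\|f\|^2=\int_F|f(n'(x))|^2\,dx$. Cauchy--Schwarz in $x$ and $\|f\|=1$ then give
\[
  |\ell(W,1_{\mathfrak{o}},f)|^2 \;\le\; \int_{x\in F}\big|\mathcal{L}_\chi\!\big(n'(x)W,\,\rho_\psi(n'(x))1_{\mathfrak{o}}\big)\big|^2\,dx \;=:\; \|h_W\|^2 ,
\]
and this loses nothing, since the right-hand side equals $\sum_{f}|\ell(W,1_{\mathfrak{o}},f)|^2$ over an orthonormal basis of $\sigma$; it remains only to bound $\|h_W\|^2$, uniformly over unit $W$, by $C(\ad(\pi))^{-3/4}=q^{-3c(\ad(\pi))/4}$.

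For this I would split the $x$-integral dyadically. For $x\in\mathfrak{o}$ one has $n'(x)\in K$, hence $\rho_\psi(n'(x))1_{\mathfrak{o}}=1_{\mathfrak{o}}$, and $h_W(x)$ is the pairing of a $K$-translate of the Kirillov function $z\mapsto W(a(z))$ against the \emph{unramified} weight $1_{\mathfrak{o}}(y)\overline{\chi}(y)|y|^{-1/2}$; for $|x|=q^m$ with $m\ge 1$, the Weil-representation formulas of \S\ref{sec:weil-repn} (via the Bruhat identity $n'(x)=n(x^{-1})t(-x^{-1})\,w\,n(x^{-1})$ and $\rho_\psi(w)\phi=\gamma(\psi)\phi^{\wedge}$) show that $\rho_\psi(n'(x))1_{\mathfrak{o}}$ is, on $\{|y|\le q^m\}$, a quadratic phase $\psi(y^2/x)$ times a Gauss sum of modulus $\asymp q^{-m/2}$. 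Passing to the Kirillov model via $z=y^2$ reduces matters to bounding oscillatory integrals of $W(a(z))$ against explicit quadratic-phase weights, subject only to $\int_{F^\times}|W(a(z))|^2\,d^\times z=1$. The saving is meant to come from the fact that precisely when $\pi$ is ramified and \emph{not} a twist of the special representation, the Kirillov function of $\pi$ is genuinely incompatible with the unramified datum $1_{\mathfrak{o}}$ --- for twists of the special representation the newvector $W(a(y))=1_{\mathfrak{o}}(y)|y|$ is, by contrast, perfectly matched to it --- so these integrals exhibit cancellation of a strength measured by $c(\pi)$, hence by $c(\ad(\pi))=c(\pi\otimes\overline{\pi})$; summing the dyadic contributions would yield $\|h_W\|^2\ll q^{-3c(\ad(\pi))/4}$ and finish the proof.

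The hard part is exactly this last estimate, carried out uniformly over \emph{all} unit vectors $W$ rather than just the newvector. Sobolev norms are useless, since $\mathcal{S}(W)$ grows with $c(\pi)$; and the crude bound $|\mathcal{L}_\chi(W',\phi')|\ll\|W'\|\cdot(\text{naive weight-norm})$ retains no saving at all. What is needed is a genuinely $L^2$-level bound for the functionals $W\mapsto\mathcal{L}_\chi(W,\rho_\psi(n'(x))1_{\mathfrak{o}})$ on $\pi$, extracting the conductor from the interference between the quadratic Gauss-sum phase and the oscillation of Whittaker vectors in the Kirillov model --- equivalently, from the local functional equation relating the $\psi$- and $\overline{\psi}$-Kirillov models at the scale $q^{c(\pi)}$ --- and it is here that the argument is the reverse, in both direction and difficulty, of the computation in \S\ref{sec:interesting-local-estimates}.
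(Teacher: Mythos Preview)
Your setup is right through the Cauchy--Schwarz step: linearize via \S\ref{sec:linearize-local-shimura}, reduce to $|\ell(W,1_{\mathfrak{o}},f)|^2 \ll C(\ad(\pi))^{-3/4}$, and eliminate $f$ by Cauchy--Schwarz in the line-model variable.  But the proof then stops: you acknowledge that the remaining $L^2$-bound, uniform over all unit $W$, is ``the hard part,'' and you have not carried it out.  Your proposed route --- dyadic splitting in $|x|$, explicit Weil-representation formulas for $\rho_\psi(n'(x))1_{\mathfrak{o}}$, then bounding oscillatory integrals of $W(a(z))$ against quadratic phases --- does not obviously close: the issue is exactly the one you name, that nothing in this picture isolates the conductor $C(\ad(\pi))$ uniformly over the full Kirillov model.

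The paper's mechanism is different and much cleaner.  After Cauchy--Schwarz one applies \emph{Parseval} in $x$, passing from $\Psi(x)=\int_{y}W(a(y^2)\,w\,n(x))\,1_{\mathfrak{o}}(y)\,|y|^{-1/2}\,d^\times y$ to its Fourier transform $\Psi^\wedge(\xi)$.  The point is that $\Psi$ already involves $wW$; one now writes $1_{\mathfrak{o}}(y)=\oint_{(\eps)}\zeta_F(s)|y|^{-s}$ by Mellin inversion, detects the square $y^2$ by summing over quadratic characters $\eta$, and applies the Jacquet--Langlands local functional equation to convert $Z(wn(x)W,\eta,1/4-s)$ into $\gamma(\pi\otimes\eta,3/4+s)\,Z(n(x)W,\eta,3/4+s)$.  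Fourier-transforming in $x$ then yields the exact factorization
\[
  \Psi^\wedge(\xi) \;=\; \Phi(\xi)\cdot W(a(\xi))\,|\xi|^{-1/2},
\]
where $\Phi$ is a scalar function independent of $W$ given by an explicit Mellin integral of $\gamma(\pi\otimes\eta,3/4+s)$.  Uniformity in $W$ is now automatic, since $\int|W(a(\xi))|^2\,|\xi|^{-1}\,d\xi=\|W\|^2=1$.  The hypothesis that $\pi$ is not a twist of the special representation enters decisively here: it forces $L(\pi\otimes\eta,s)=1$ for every quadratic $\eta$, so $\gamma(\pi\otimes\eta,s)$ is a pure power $\eps\,C(\pi\otimes\eta)^{1/2-s}$, the Mellin integral evaluates explicitly, and one obtains $\Phi(\xi)\ll C(\pi\otimes\eta)^{-1/2}$.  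Conductor bookkeeping (principal series vs.\ supercuspidal) gives $C(\pi\otimes\eta)\ge C(\ad(\pi))^{3/4}$, hence $\Phi(\xi)\ll C(\ad(\pi))^{-3/8}$, which is the claim.

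So the missing idea is: rather than estimating in physical space, pass to the dual variable and use the local functional equation to \emph{factor out} the $W$-dependence entirely, reducing to a pointwise bound on a scalar $\gamma$-factor integral where the conductor appears explicitly.  You correctly sensed that the functional equation is the key, but the implementation is Parseval $+$ Mellin inversion $+$ the factorization above, not a dyadic stationary-phase analysis.
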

The proof is given below.
The precise shape of $\varphi_2$ is unimportant;
a similar argument shows that if the
local field $F$ (possibly archimedean) and the additive
character $\psi$ are held fixed, then the same conclusion holds
for any fixed vector $\varphi_2$.  The interpretation is that if
one attempts to apply our method to other aspects using an
essentially fixed vector $\varphi_2$ (as may seem necessary,
cf. the proof sketch of \S\ref{sec:overview-proof}), then one
cannot reasonably hope even to recover the convexity bound: one
must save in the exponent $3/8 - 1/4 = 1/8$, which is infeasible
via amplification.
(To make matters worse, our proof
shows that the exponent $-3/8$ in \eqref{eqn:3-8-bound}
may often be improved
to $-1/2$.)
One nevertheless has a few potential avenues
for extending our method to other aspects.
\begin{enumerate}[(i)]
\item One could 
  attempt to achieve the lower bound
  \eqref{eqn:sec-5-recovering-convexity} by varying the vector
  $\varphi_2$ sufficiently.  Experience suggests then that the
  upper bound \eqref{eqn:int-up-bnd} fails (although we have not
  yet rigorously demonstrated that it must).  Thus the spectral
  expansion as in \eqref{eqn:to-be-shown-after-sl2-expn} does
  not decay uniformly rapidly with respect to $\chi$.
  Experience suggests that in such ``long'' spectral decompositions,
  the bounds obtained by estimating each component individually
  (even optimally) are inadequate.
\item The basic input to our method is not really the pair of
  vectors $\varphi_1, \varphi_2$ but rather their tensor product
  $\varphi_1 \otimes \varphi_2 \in \pi \otimes \rho_\psi$.  One
  could attempt to prove analogues involving higher rank tensors
  of the lemma of \S\ref{sec:interesting-local-estimates}.  It
  seems likely to us that this is not possible, although we have
  not yet rigorously excluded the possibility; doing so would
  involve solving a more complicated optimization problem than
  that treated below.
\item One could attempt to achieve adequate lower bounds
  involving higher rank tensors for which the resulting spectral
  expansions are ``long'' but nevertheless susceptible to
  arguments generalizing those of Munshi
  \cite{2017arXiv170905615M}.
  This approach seems to us the most promising;
  we hope to attempt it in the future.
\item One should be able to apply our method to prove
  subconvexity implications involving general variation of $\pi$
  provided that $\tau$ and hence $\sigma$ vary with $\pi$ in a
  specific manner (like in \cite{MR3489080}, but with
  ramification at the same place).  For example, this may be
  possible if at the interesting place we take normalized inductions
  $\pi = 1 \boxplus \nu$ (or twists thereof) and
  $\tau = \mathcal{I}_{\PGL_2}(\nu)$ for some highly ramified
  unitary character $\nu$.  The resulting subconvexity problem
  is then unrelated to our motivating applications (AQUE), but
  remains of basic interest.
\end{enumerate}

\begin{proof}[Proof of the lemma]
  We realize $\pi$, $\rho_{\psi}$ and $\sigma$
  in their respective models as in
  \S\ref{sec:interesting-local-estimates}.
  We assume for convenience (mildly contradicting
  the assumed conventions of \S\ref{sec:local-unramified-case})
  that the Haar measure
  on $F^\times$
  is given by $d^\times y = \frac{d y}{|y|}$.
  
  Our assumptions imply that $\sigma = \mathcal{I}_{\Mp_2}(|.|^{i v})$ for some
  $v \in \mathbb{R} / 2 \pi \log(q) \mathbb{Z}$.
  To simplify notation, we assume
  that $v = 0$.  The same argument applies for $v \neq 0$.

  Since we are in the unramified case, we have
  $\chi_\psi(u) = 1$ for all $u \in \mathfrak{o}^\times$.

  Let $\omega_\pi$ denote the central character of
  $\pi$.
  The central element $t(-1) \in \Mp_2(F)$ acts by
  $\omega_\pi(-1)$ on $\pi$ and trivially both on $1_\mathfrak{o} \in \rho_{\psi}$
  and on $\sigma$,
  so $\mathcal{P}_{\SL_2(F)}(\cdot,1_\mathfrak{o},\cdot)$ vanishes identically
  unless $\omega_\pi(-1) = 1$.
  This forces $\omega_\pi$ to be
  the square of some character, say $\omega_\pi^{1/2}$.  The
  conclusion of the lemma depends only upon the restriction of
  $\pi$ to $\SL_2(F)$, and that restriction is unchanged by
  twisting, so upon replacing $\pi$ with
  $\pi \otimes (\omega_\pi^{1/2})^{-1}$ as necessary,
  we reduce to
  the case that $\omega_\pi = 1$.
  In particular, $\pi$ is self-contragredient.
  Its local $\gamma$-factor
  thus has the form
  \[
  \gamma(\pi \otimes \chi,  s)
  :=
  \gamma(\pi \otimes \chi,  s, \overline{\psi})
  = \eps(\pi \otimes \chi, s, \overline{\psi })
  \frac{L(\pi \otimes \chi^{-1}, 1 - s)}{L(\pi \otimes \chi,
    s)},
  \] 
  while the Jacquet--Langlands local functional equation
  reads: for $W \in \pi = \mathcal{W}(\pi,\overline{\psi})$
  and characters $\chi$ of $F^\times$,
  \[
  Z(W,\chi,s)
  :=
  \int_{y \in F^\times}
  W(a(y)) \chi(y) |y|^{s-1/2} \, d^\times y
  =
  \frac{Z(w W, \chi^{-1}, 1 - s)}{
    \gamma(\pi \otimes \chi,  s),
  }
  \]
  with the integral defined first for $\Re(s)$ large enough
  and extended by meromorphic continuation.

  Let $W \in \pi$ and
  $f \in \sigma$ be unit vectors.
  Then
  $\mathcal{P}_{\SL_2(F)}(W,1_\mathfrak{o},f) \asymp \left\lvert
    \ell(W,1_\mathfrak{o},f) \right\rvert^2$.
  As noted in \S\ref{sec-3-17-2},
  one has $C(\ad(\pi) \otimes \tau) = C(\ad(\pi))^2$.
  Our task is thus to show that
  \[\ell(W,1_\mathfrak{o},f) \ll C(\ad(\pi))^{-3/8}.\]
  We
  will make use of the following
  integral formula and
  normalization of Haar measures:
  for $\Phi \in C_c(N \backslash G)$,
  \[
  \int_{N \backslash G} \Phi
  = 
    \int_{x \in \mathfrak{o}}
  \int_{y \in F^\times}
  \Phi(t(y) w n(x)) \, \frac{d^\times y}{|y|^2} \, d x
  + 
  \int_{x \in \mathfrak{p}}
  \int_{y \in F^\times}
  \Phi(t(y) n'(x)) \, \frac{d^\times y}{|y|^2} \, d x.
  \]
  For $g = t(y) w n(x)$ with $x \in \mathfrak{o}$,
  we have $W(g) = W(a(y^2) w n(x))$,
  $\rho_{\psi}(g) 1_\mathfrak{o}(1)
  = |y|^{1/2} 1_\mathfrak{o}(y)$
  and $f(g) = |y| f(w n(x))$.
  For $g = t(y) n'(x)$ with $x \in \mathfrak{o}$,
  we have $W(g) = W(a(y^2) n'(x))$,
  $\rho_{\psi}(g) 1_\mathfrak{o}(1)
  = |y|^{1/2} 1_\mathfrak{o}(y)$
  and $f(g) = |y| f(n'(x))$.
  Thus
  $\ell(W,1_\mathfrak{o},f) = \ell_1 + \ell_2$,
  where
  \[
  \ell_1
  :=
  \int_{x \in \mathfrak{o}} \int_{y \in F^\times}
  W(a(y^2) w n(x))
  1_\mathfrak{o}(y)
  f(w n(x))
  \frac{d^\times y}{|y|^{1/2}} \, d x
  \]
  \[
  \ell_2 :=
  \int_{x \in \mathfrak{p}} \int_{y \in F^\times}
  W(a(y^2) n'(x))
  1_\mathfrak{o}(y)
  f(n'(x))
  \frac{d^\times y}{|y|^{1/2}} \, d x.
  \]
  We henceforth estimate $\ell_1$;
  analogous arguments apply to $\ell_2$.
  Set
  \[
  \Psi(x) :=
  \int_{y \in F^\times}
  W(a(y^2) w n(x))
  1_\mathfrak{o}(y)
  \frac{d^\times y}{|y|^{1/2}},
  \]
  so that $\ell_1 = \int_{x \in F} \Psi(x) f(w n(x)) \, d x$.
  A change of variables gives
  \[
  1 = 
  \|f\|^2 =
  \int_{x \in F} |f(n'(x))|^2 \, d x
  = 
  \int_{x \in \mathfrak{o}} |f(w n(x))|^2 \, d x
  + 
  \int_{x \in \mathfrak{p}} |f(n'(x))|^2 \, d x.
  \]
  By Cauchy--Schwartz, positivity and Parseval,
  it follows that
  \[|\ell_1|^2
  \leq
  \int_{x \in \mathfrak{o}}
  |\Psi(x)|^2 \, d x
  \leq
  \int_{x \in F}
  |\Psi(x)|^2 \, d x
  =
  \int_{\xi \in F}
  |\Psi^\wedge(\xi)|^2 \, d \xi,
  \]
  where
  $\Psi^\wedge(\xi) := \int_{x \in F} \Psi(x) \psi(-\xi x) \, d
  x$.
  In the remainder of the proof,
  we will show that
  \begin{equation}\label{eq:opt-identity}
    \Psi^\wedge(\xi) = \Phi(\xi) W(a(\xi)) |\xi|^{-1/2}.
  \end{equation}
  for some function $\Phi : F^\times \rightarrow \mathbb{C}$
  satisfying the pointwise estimate
  \begin{equation}\label{eq:opt-estimate}
    \Phi(\xi) \ll C(\ad(\pi))^{-3/8} \text{ for all } \xi \in F^\times.
  \end{equation}
  Since $1 = \|W\|^2 = \int_{\xi \in F^\times}
  |W(a(\xi))|^2 \, \frac{d \xi }{|\xi|}$,
  the required estimate for $\ell_1$ then follows.

  We turn to \eqref{eq:opt-identity}.
  By detecting squares via quadratic characters $\eta$ of $F^\times$,
  we see that there exists $c > 0$ with $c \asymp 1$
  so that
  \[
  \Psi(x)
  =
  c
  \sum_{\eta^2 = 1}
  \int_{y \in F^\times}
  W(a(y) w n(x)) 1_\mathfrak{o}(y)
  \eta(y)
  |y|^{-1/4}
  \, d^\times y.
  \]
  Let $\zeta_F(s) = (1 - q^{-s})^{-1}$
  denote the local zeta factor.
  Choose $\eps > 0$ sufficiently small.
  By Cauchy's formula,
  we have
  $1_\mathfrak{o}(y)
  =
  \oint_{(\eps)}
  \zeta_F(s)
  |y|^{-s}$,
  where $\oint_{(\eps)}$ denotes the integral over
  $s = \eps + i t$ with $t$ sampled from the compact group
  $\mathbb{R} / 2 \pi \log(q) \mathbb{Z}$ with respect to the probability
  Haar.
  Using the estimate $W(a(y) w n(x)) \ll |y|^{1/2-\vartheta}$
  to justify exchanging integrals and applying the local functional equation,
  it follows that
  \begin{align*}
    \Psi(x)
    &=
      c
      \sum_{\eta^2 = 1}
      \int_{(\eps)}
      \zeta_F(s)
      \underbrace{\int_{y \in F^\times}
      W(a(y) w n(x))
      \eta(y)
      |y|^{-s-1/4}
      \, d^\times y}_{= Z(w n(x), \eta, 1/4-s)}
    \\
    &=
      c
      \sum_{\eta^2 = 1}
      \int_{(\eps)}
      \zeta_F(s)
      \gamma(\pi \otimes \eta, 3/4 + s)
      Z(n(x) W, \eta, 3/4 + s).
  \end{align*}
  We have $n(x) W(a(y)) = \psi(y x) W(a(y))$,
  thus
  \[
  Z(n(x) W, \eta, 3/4 + s)
  = \int_{y \in F^\times}
  \psi(y x)
  W(a(y))
  \eta(y)
  |y|^{3/4+s - 1/2}
  \, \frac{d y}{|y|},
  \]
  so by Fourier inversion,
  \[
  \int_{x \in F} \psi(-\xi x)
  Z(n(x) W, \eta, 3/4 + s) \, d x
  =
  \eta(\xi) |\xi|^{-1/4+s} W(a(\xi)) |\xi|^{-1/2} .
  \]
  Thus \eqref{eq:opt-identity}
  holds with
  $\Phi(\xi)
  = c \sum_{\eta^2 = 1} \Phi_\eta(\xi)$,
  where
  \[
  \Phi_\eta(\xi)
  :=
  \int_{(\eps)}
  \zeta_F(s)
  \gamma(\pi \otimes \eta, 3/4 + s)
  \eta(\xi) |\xi|^{-1/4+s}.
  \]
  We now verify the estimate \eqref{eq:opt-estimate} for
  $\Phi(\xi)$
  by bounding each $\Phi_{\eta}(\xi)$ individually.
  By the classification of
  $\vartheta$-tempered generic irreducible unitary
  representations of $\PGL_2(F)$
  and our assumption that $\pi$ is not a twist of the special representation,
  there are the following possibilities for $\pi$:
  \begin{enumerate}[(i)]
  \item $\pi$ is the normalized
    induction $\nu \boxplus \nu^{-1}$
    for some character $\nu$ of $F^\times$
    for which either
    \begin{itemize}
    \item $\nu$ is unitary, or
    \item $\nu = \nu_0 |.|^c$ with $\nu_0$ quadratic
      and $0 < c \leq \vartheta$.
    \end{itemize}
    We have $C(\pi \otimes \eta) = C(\nu \otimes \eta)^2$,
    $C(\ad(\pi)) = C(\nu^2)^2$
    and $L(\pi \otimes \eta, s)
    = L(\nu \eta, s) L(\nu^{-1} \eta, s)$.
    Our assumption
    \eqref{eqn:ad-pi-ramified}
    says that $\nu^2$ is ramified,
    hence that $\nu$ is unitary,
    that $\nu \eta, \nu^{-1} \eta$
    are ramified,
    and that $C(\nu) = C(\nu \eta) = C(\nu^{-1} \eta)$.
    Since $q$ is odd, we have also $C(\nu^2) = C(\nu)$.
    Thus
    \begin{equation}\label{eqn:classification-consequences-for-C-L}
      C(\pi \otimes \eta) = C(\ad(\pi)),
      \quad 
      L(\pi \otimes \eta,s) = 1.
    \end{equation}
  \item $\pi$ is supercuspidal.
    Then $L(\pi \otimes \eta, s) = 1$
    and $C(\pi) = C(\pi \otimes \eta) \geq q^2$ (see
    \cite[Prop 3.4]{MR0476703}).
    If $C(\pi) = q^{2 n}$,
    then $C(\ad(\pi)) \leq q^{2 n}$,
    while if $C(\pi) = q^{2 n + 1}$,
    then $C(\ad(\pi)) = q^{2 n + 2}$ (see e.g. \cite[Prop 2.5]{PDN-AP-AS-que}).
    In particular,
    \begin{equation}\label{eqn:classification-consequences-for-C-L-2}
      C(\pi \otimes \eta) \geq C(\ad(\pi))^{3/4},
      \quad 
      L(\pi \otimes \eta,s) = 1.
    \end{equation}
    (Equality holds if and only if $C(\pi) = q^3$.)
  \end{enumerate}
  In both cases
  it follows that
  $\gamma(\pi \otimes \eta,  s)
  = \eps_{\pi \otimes \eta} C(\pi \otimes \eta)^{1/2-s}$
  with
  $|\eps_{\pi \otimes \eta}| = 1$,
  hence that
  \begin{align*}
    \Phi_\eta(\xi)
    &\asymp
      \int_{(\eps)}
      \zeta_F(s)
      C(\pi \otimes \eta)^{-1/4-s}
      |\xi|^{-1/4+s}
    \\
    &=
      \begin{cases}
        C(\pi \otimes \eta)^{-1/4} |\xi|^{-1/4} &
        \text{ if }
        |\xi| \geq C(\pi \otimes \eta), \\
        0 & \text{ otherwise.}
      \end{cases} \\
    &\ll
      C(\pi \otimes \eta)^{-1/2} \\
    &\ll C(\ad(\pi))^{-3/8},
  \end{align*}
  as required.
\end{proof}

\bibliography{refs}{}
\bibliographystyle{plain}
\end{document}